\documentclass[11pt,reqno]{amsart}
\headheight=6.00pt
\textheight=8.9in
\textwidth=6.8in
\oddsidemargin=-0.2in
\evensidemargin=-0.2in
\topmargin=0.1in

\usepackage{amsmath}
\usepackage{amssymb}
\usepackage{amscd}
\usepackage{latexsym}
\usepackage{graphicx}
\usepackage{color}

\newcommand{\wh}{\widehat}
\newcommand{\ispa}[1]{\langle \,#1 \,\rangle }

\newcommand{\sgn}{\operatorname{sgn}\nolimits}
\newcommand{\ol}{\overline}
\newcommand{\mf}{\mathfrak}
\newcommand{\mb}{\mathbb}
\newcommand{\scal}{\mathcal{S}}
\newcommand{\kcal}{\mathcal{K}}
\newcommand{\lcal}{\mathcal{L}}
\newcommand{\re}{{\rm Re}\,}
\newcommand{\im}{{\rm Im}\,}
\newcommand{\vphi}{\varphi}
\newcommand{\spec}{{\rm Spec\hspace{0.1mm}}}
\newcommand{\dsp}{\displaystyle}
\newcommand{\down}{{\rm {\scriptstyle down}}}
\newcommand{\up}{{\rm {\scriptstyle up}}}

\newtheorem{thm}{{\sc Theorem}}[section]
\newtheorem{cor}[thm]{{\sc Corollary}}
\newtheorem{lem}[thm]{{\sc Lemma}}
\newtheorem{prop}[thm]{{\sc Proposition}}
\newtheorem{defin}[thm]{{\sc Definition}}
\newenvironment{rem}{\medskip\noindent{\it Remark$:$\/} }{\medskip}

\begin{document}

\title[Up and Down Grover walks]
{Up and Down Grover walks on simplicial complexes}
\author{Xin Luo}
\address{Xin Luo, College of Mathematics and Econometrics, Hunan University,Changsha 410082,China}
\email{xinluo@hnu.edu.cn}
\author{Tatsuya Tate}
\address{Tatsuya Tate, Mathematical Institute, Graduate School of Sciences, Tohoku University,
Aoba, Sendai 980-8578, Japan. }
\email{tate@m.tohoku.ac.jp}
\thanks{The second author is partially supported by JSPS Grant-in-Aid for Scientific Research (No. 25400068, No. 15H02055).}
\date{\today}

\renewcommand{\thefootnote}{\fnsymbol{footnote}}
\renewcommand{\theequation}{\thesection.\arabic{equation}}
\renewcommand{\labelenumi}{{\rm (\arabic{enumi})}}
\renewcommand{\labelenumii}{{\rm (\alph{enumii})}}
\numberwithin{equation}{section}

\begin{abstract}
A notion of up and down Grover walks on simplicial complexes are proposed and their properties are investigated.
These are abstract Szegedy walks, which is a special kind of unitary operators on a Hilbert space.
The operators introduced in the present paper are usual Grover walks on graphs
defined by using combinatorial structures of simplicial complexes.
But the shift operators are modified so that it can contain information of
orientations of each simplex in the simplicial complex.
It is well-known that the spectral structures of this kind of unitary operators
are almost determined by its discriminant operators.
It has strong relationship with combinatorial Laplacian on simplicial complexes
and geometry, even topology, of simplicial complexes.
In particular, theorems on a relation between spectrum of up and down discriminants
and orientability, on a relation between symmetry of spectrum of discriminants
and combinatorial structure of simplicial complex are given.
Some examples, both of finite and infinite simplicial complexes, are also given.
Finally, some aspects of finding probability and stationary measures are discussed.


\end{abstract}

\maketitle

\section{Introduction}\label{INTRO}

{\it Grover walks}, originally introduced in \cite{W} and named after a famous work of Grover \cite{G} on
a quantum search algorithm, is one of unitary time evolution operators, often called discrete-time
quantum walks, defined over graphs. These are introduced in computer sciences and
developed in areas of mathematics, such as probability theory, spectral theory
and geometric analysis. It was Szegedy \cite{Sz} who had realized that their spectral structure of
{\it Szegedy walks}, which generalizes Grover walks,
are almost determined by a self-adjoint operator, called {\it discriminant} operator.
Szegedy's idea also works well for infinite graphs as is developed in \cite{HKSS}, \cite{SS}.
More concretely, an abstract Szegedy walk, is a unitary operator of the form
\begin{equation}\label{G1}
U=S C,
\end{equation}
where $S$ and $C$ are unitary operators on a separable Hilbert space
satisfying $S^{2}=C^{2}=I$.

In \cite{MOS}, certain class of unitary transitions on simplicial complexes are introduced.
Suppose that $\kcal=(V,\scal)$ is a simplicial complex with certain conditions where
$V$ is a set of vertices and $\scal$ is a set of simplices.
Let $\wh{K}_{q}$ be the set of sequences of vertices of length $q+1$ which form simplices in $\scal$.
The symmetric group $\mf{S}_{q+1}$ of order $(q+1)!$ acts on $\wh{K}_{q}$ naturally.
The operators introduced in \cite{MOS} act on the Hilbert space $\ell^{2}(\wh{K}_{q})$.
They have the form $\eqref{G1}$ but the operator $S$, which is often called a `shift operator'
of an abstract Szegedy walk, is given by the action of certain permutation $\pi \in \mf{S}_{q+1}$,
and hence in general it does not satisfy $S^{2}=I$.
It seems that the operators introduced in \cite{MOS} would have rather advantage
because one can choose permutations $\pi$ for various purposes.
However, to find their geometric aspects, it does not seem so transparent, because
it is not quite clear which permutation should be chosen to relate the operators with geometry.
Simplicial complex is a geometric, topological and combinatorial object.
Hence it would be rather natural to expect that operators so-defined have geometric information.
Like Laplacians acting on differential forms, there is a notion of {\it combinatorial Laplacians}
which is defined by replacing the exterior differentials in the definition of Laplacians acting on differential forms
by the coboundary operator in simplicial cohomology theory.
A general framework for this combinatorial Laplacian was
introduced and investigated in an interesting article \cite{HJa}.
They have a rich geometric aspects, such as Hodge decomposition.

The purpose in the present paper is to introduce and investigate
other Grover walks on simplicial complexes.
The definition is rather simple. The operators we mainly consider are Grover walks
on graphs, which we call {\it up and down graphs} (and it is essentially the same as
the dual graph used in \cite{HJa}), defined by using combinatorial structures of simplicial complexes.
They are basically Grover walks on graphs but the shift operator is a bit different.
Namely it is modified from the usual shift operators on graphs, which will be necessary
to take the orientation of simplices into account.
Indeed this modification makes Grover walks on up and down graphs,
which we call {\it up and down Grover walks},
certainly have geometric aspects.
We also consider the alternating sum of the operators introduced in \cite{MOS}.
It has also certain relation with the combinatorial Laplacian. However, there is a significant difference
between this and our up and down Grover walks. This difference is caused by a lack of
the `down parts' of the operators introduced in \cite{MOS}. In contrast, our operators
are `two-folds', there are two operators having close relation with up and down Laplacians,
and hence they would have rich geometric information.
Indeed, one of our main theorem (Theorem $\ref{ori1}$) says that, for certain simplicial complexes,
the down-Grover walk in top dimension has eigenvalue $-1$
if and only if the simplicial complex has a coherent orientation.
This shows that `down parts' also have nice geometric information.

The organization of the present paper is as follows.
After preparing some notion and terminology of simplicial complexes and function spaces in Section $\ref{NT}$,
the definitions of various operators investigated in the paper will be given in Section $\ref{DEFGR}$.
In Section $\ref{SIMP}$ some of fundamental properties of up and down Grover walks
are given. One of main parts is Section $\ref{ORI}$ where one can find nice relation
between the spectrum of our operators and geometry. In this section the spectrum of our
operators for infinite cylinder is computed. In Section $\ref{SSS}$ relationship between
spectral symmetry and combinatorial structures is investigated, and some examples for finite simplicial complexes
are given. Finally, in Section $\ref{FP}$, finding probabilities defined by our operators are investigated and,
in particular, certain stationary measures are given.

\section{Notation and terminology}\label{NT}

In this section, we prepare some notation used in this paper.
Throughout the present paper, $\kcal=(V,\scal)$ is an abstract simplicial complex, or simply a simplicial complex, with
the set of vertices $V$ and the set of simplices $\scal$. We recall that the set $\scal$ is a subset in $2^{V}$
closed under inclusion, $\scal$ contains sets of the form $\{v\}$ with $v \in V$ and
each elements in $\scal$ is a finite subset of $2^{V}$.
It is assumed that the empty set is always contained in $\scal$, and $V$ is a countable set.
For a given simplex $F \in \scal$,
if the number of elements in $F$ is $q+1$, then we say that the dimension of $F$ is $q$ and
in this case we write $\dim F=q$.
The set of all simplex of the dimension $q$ is denoted by $\scal_{q}$.
We call subsets of a simplex $F \in \scal_{q}$ faces of $F$.

\subsection{Terminology on simplicial complexes}\label{TSC}

We mean by an {\it ordered simplex} in $\kcal$ the sequence $s=(a_{0}a_{1}\cdots a_{q})$ of vertices $a_{j}$ in $V$
with $\{a_{0},a_{1},\ldots,a_{q}\} \in \scal_{q}$. We say that the dimension of an ordered simplex $s=(a_{0}a_{1}\cdots a_{q})$ is $q+1$.
The set of all ordered simplex of dimension $q$ is denoted by $\wh{K}_{q}$.

Let $\pi$ be a permutation on the set $\{0,1,\ldots,q\}$ consisting of $q+1$ elements
and let $s=(a_{0} \cdots a_{q}) \in \wh{K}_{q}$.
Then we define an element $s^{\pi}$ in $\wh{K}_{q}$ by $s^{\pi}=(a_{\pi(0)} \cdots a_{\pi(q)})$.
If $\pi_{1}$ and $\pi_{2}$ are two permutation on $\{0,1,\ldots,q\}$, we have $(s^{\pi_{1}})^{\pi_{2}}=s^{\pi_{1} \pi_{2}}$.
Therefore this determines an action of the symmetric group $\mf{S}_{q+1}$ of order $(q+1)!$ on
the set of all ordered simlices $\wh{K}_{q}$ of dimension $q$.
Then we define $K_{q}$ by $K_{q}=\wh{K}_{q}/\mf{A}_{q+1}$, where $\mf{A}_{q+1}$ is the alternating group of order $(q+1)!/2$.
We note that $\mf{A}_{q+1}$ is defined as the group consisting of all the permutations in $\mf{S}_{q+1}$ with signature $1$.
We call elements in $K_{q}$ {\it oriented simplices} of dimension $q$.
An equivalence class of ordered simplex $s=(a_{0} \cdots a_{q})$ is denoted by $\ispa{s} \in K_{q}$.
Since $\mf{S}_{q+1}/\mf{A}_{q+1} \cong \mb{Z}_{2}$, we have an action of $\mb{Z}_{2}$ on $K_{q}$.
We denote this action by $K_{q} \ni \tau \mapsto \ol{\tau} \in K_{q}$. The oriented simplex $\ol{\tau}$ for $\tau \in K_{q}$
is said to have the orientation opposite to $\tau$.
It should be noted that, in usual homology theory, $K_{q}$ is used to define the chain complex of the
simplicial complex $\kcal$.

Noting $\scal_{q}=\wh{K}_{q}/\mf{S}_{q+1}$, we denote the simplex in $\scal_{q}$
corresponding to $\tau \in \wh{K}_{q}$ by $[\tau]$.
For any $[\tau]$ and $[\sigma]$ in $\scal_{q}$, we say they are {\it up neighbors} if they are contained (as a face)
in a common simplex of dimension $q+1$,
and we say they are {\it down neighbors} if they share one common ($q-1$)-dimensional simplex (as a face).
Let $\sigma=\ispa{a_{0}a_{1}\cdots a_{q}} \in K_{q}$ and let $\tau \in K_{q-1}$ such that $[\tau]$ is a face of $[\sigma]$.
Then, the signature $\sgn(\sigma,\tau)$ is defined as $(-1)^{j}$ if $\tau=\ispa{a_{0} \cdots \wh{a_{j}} \cdots a_{q}}$.
When $[\tau]$ is not a face of $[\sigma]$, we put $\sgn(\sigma,\tau)=0$.
It holds that $\sgn(\sigma,\ol{\tau})=\sgn(\ol{\sigma},\tau)=-\sgn(\sigma,\tau)$.
The presentation of orientation of simplices in $\kcal$ might not be so common.
However, it will be useful for example in Section $\ref{ORI}$.

\subsection{Assumption on simplicial complexes.} \label{AS}

Throughout the paper, the simplicial complex $\kcal=(V,\scal)$ is assumed to satisfy all of the following properties.
\begin{itemize}
\item $\kcal$ has bounded degree, namely, there exists a constant $K>0$ such that
for each $F \in \scal$, the number of elements in $\scal$ containing $F$ is not greater than $K$.
\item $\kcal$ has a finite dimension, in the sense that the maximum of the dimensions of simplices in $\scal$ is finite.
We denote by $\dim \kcal$ the maximum of dimensions of simplices in $\scal$.
\item $\kcal$ is pure, in the sense that for any $F \in \scal$, there exists a $G \in \scal$ such that $\dim G=\dim \kcal$
and $F$ is contained in $G$.
\item $\kcal$ is strongly connected,
in the sense that, for two given simplices $\sigma_{1},\sigma_{2} \in \scal_{\dim \kcal}$, there exists a sequence
$\tau_{1},\ldots,\tau_{n} \in \scal_{\dim \kcal}$ such that $\tau_{1}=\sigma_{1}$, $\tau_{n}=\sigma_{2}$ and
$\tau_{i} \cap \tau_{i+1} \in \scal_{\dim \kcal -1}$ for each $i=1,\ldots,n-1$.
\end{itemize}

\subsection{Orientation} \label{SORI}

Let $\kcal=(V,\scal)$ be a simplicial complex (not necessarily satisfy the above assumptions).
Let $\sigma \in K_{q}$, $\tau \in K_{q-1}$. Suppose that $[\tau] \subset [\sigma]$.
Then the orientation of $\tau$ (as an oriented simplex) is said to be {\it induced} by the orientation of $\sigma$ if
$\sgn(\sigma,\tau)=1$.
Let $\sigma_{1},\sigma_{2} \in K_{q}$. Suppose that $[\tau]=[\sigma_{1}] \cap [\sigma_{2}] \in \scal_{q-1}$,
namely suppose that $\sigma_{1}$ and $\sigma_{2}$ are down neighbors.
Then the orientation of $\sigma_{1}$ and $\sigma_{2}$ (as oriented simplices) is said to be {\it coherent} if
$\sgn(\sigma_{1},\tau)\sgn(\sigma_{2},\tau)=-1$.
For an $n$-dimensional simplicial complex $\kcal$, its {\it orientation} means a subset $K_{n}^{o}$ of $K_{n}$
such that $K_{n}^{o} \cap \ol{K_{n}^{o}}=\emptyset$, where $\ol{K_{n}^{o}}=\{\ol{\sigma} \,;\, \sigma \in K_{n}^{o}\}$,
and $K_{n}=K_{n}^{o} \cup \ol{K_{n}^{o}}$.

An $n$-dimensional pure simplicial complex $\kcal$ is said to be {\it coherently orientable}
if there exists an orientation $K_{n}^{o}$ of $K_{n}$ such that the orientation of any two simplices
in $K_{n}^{o}$ which are down neighbors is coherent.
Opposed to this notion, the simplicial complex $\kcal$ is said to be
{\it totally non-coherently orientable} if there exists an orientation $K_{n}^{o}$ such that $\sgn(\sigma_{1},\tau)\sgn(\sigma_{2},\tau)=1$
for any down neighbors $\sigma_{1},\sigma_{2} \in K_{n}^{o}$ where $\tau$ is the common $(n-1)$-face of $\sigma_{1}$ and $\sigma_{2}$.
It seems that the total non-coherent orientability is not commonly used notion.
However, this can be seen in Theorem 7.3 in \cite{HJa}.

\subsection{Function spaces}\label{FSPACE}

One of our purpose is to introduce Grover walks on graphs which are naturally defined by
using combinatorial structures of a simplicial complex and compare its properties with
other operators such as quantum walks (certain unitary operators) based on $\wh{K}_{q}$ defined in \cite{MOS}
and the combinatorial Laplacians discussed in \cite{HJa}. These are defined on different function spaces.
Thus we need to prepare these function spaces and mention about relationships among them.

For any countable set $X$ and functions $f,g:X \to \mb{C}$, we define
\[
\ispa{f,g}_{X}=\sum_{x \in X} f(x) \ol{g(x)},\quad \|f\|_{X}^{2}=\ispa{f,f}_{X}
\]
if they converge. Then the $\ell^{2}$-space $\ell^{2}(X)$ is defined as
\[
\ell^{2}(X)=\{f:X \to \mb{C} \,;\, \|f\|_{X}<+\infty\}.
\]
By the assumption $\ref{AS}$ for our simplicial complex $\kcal=(V,\scal)$,
$\wh{K}_{q}$ is a countable set for any $q$.
Thus, the $\ell^{2}$-spaces $\ell^{2}(\wh{K}_{q})$, $\ell^{2}(K_{q})$ are defined.
We remark that $\ell^{2}(K_{q})$ can be naturally regarded as a subspace of $\ell^{2}(\wh{K}_{q})$.
Indeed, we have an identification
\[
\ell^{2}(K_{q}) \cong \{
f \in \ell^{2}(\wh{K}_{q}) \,;\,
f(s^{\pi})=f(s) \ (\pi \in \mf{A}_{q+1})
\}
\]
Note that, as will be explained in the next section, unitary operators called
simplicial quantum walks introduced in \cite{MOS} is defined on $\ell^{2}(\wh{K}_{q})$
and the combinatorial Laplacian introduced in \cite{HJa} is defined on a subspace of $\ell^{2}(K_{q})$,
the cochain groups.
The chain group $C_{q}(\kcal)$ for a finite simplicial complex $\kcal$ is defined as a
quotient group of a free abelian group with basis $K_{q}$ by the relations $\tau +\ol{\tau}=0$ for each $\tau \in K_{q}$.
It turns out that the chain group $C_{q}(\kcal)$ is also free abelian group with basis $K_{q}^{o}$, a fixed orientation of $K_{q}$.
The cochain group
is then defined as the set of homomorphisms from $C_{q}(\kcal)$ to $\mb{Z}$.
In our case the coefficients is complex numbers and
thus, with our notation, the cochain group with complex coefficients is defined by
\[
C^{q}(\kcal,\mb{C}) = \{f \in \ell^{2}(K_{q}) \,;\, f(\ol{\tau})=-f(\tau) \ (\tau \in K_{q})\}.
\]
We note that in the above the simplicial complex $\kcal$ is not necessarily finite.
Then $C^{q}(\kcal,\mb{C})$ can be regarded as a subspace of $\ell^{2}(\wh{K}_{q})$ as
\[
C^{q}(\kcal,\mb{C}) \cong \{f \in \ell^{2}(\wh{K}_{q}) \,;\, f(s^{\pi})=\sgn (\pi) f(s) \ \ (s \in \wh{K}_{q})\}.
\]
The inner product on $C^{q}(\kcal,\mb{C})$ as a subspace of $\ell^{2}(K_{q})$ is twice the usual inner
product on the cochain group, for example used in \cite{HJa}.
In this context it would be natural to define the space of symmetric functions
\[
C_{+}^{q}(\kcal,\mb{C})=\{f \in \ell^{2}(K_{q}) \,;\, f(\ol{\tau})=f(\tau) \ (\tau \in K_{q})\}
\cong \{f \in \ell^{2}(\wh{K}_{q}) \,;\, f(s^{\pi})= f(s) \ \ (s \in \wh{K}_{q})\}.
\]
Then we have the orthogonal decomposition
\[
\ell^{2}(K_{q})=C^{q}(\kcal,\mb{C}) \oplus C^{q}_{+}(\kcal,\mb{C}).
\]
We remark that $C_{+}^{q}(\kcal,\mb{C})$ is naturally identified with $\ell^{2}(\scal_{q})$ but the inner product
is twice that of $\ell^{2}(\scal_{q})$.

\section{Up and down graphs and Grover walks on them}\label{DEFGR}

In this section, we define our main objects, Grover walks on up and down graphs for
simplicial complex $\kcal=(V,\scal)$. Before giving it, let us review a definition of a
unitary operator discussed in \cite{MOS}.

\subsection{Modified version of an S-quantum walk} \label{MOSS}

We define $\wh{\alpha}_{q-1}:\ell^{2}(\wh{K}_{q-1}) \to \ell^{2}(\wh{K}_{q})$ by
\[
(\wh{\alpha}_{q-1}f)(s)=\frac{1}{\sqrt{\deg (\nu_{q}(s))}}f(\nu_{q}(s)),
\]
where
\[
\nu_{q}:\wh{K}_{q} \to \wh{K}_{q-1},\quad \nu_{q}(a_{0}\cdots a_{q-1}a_{q})=(a_{0} \cdots a_{q-1}),
\]
and, for $t \in \wh{K}_{q-1}$, $\deg (t)$ is defined as
\[
\deg (t)= \sharp (\wh{K}_{q})_{t},\quad
(\wh{K}_{q})_{t}=\{s \in \wh{K}_{q} \,;\,
\nu_{q}(s)=t\}.
\]
Then, the adjoint operator $\wh{\alpha}_{q-1}^{*}:\ell^{2}(\wh{K}_{q}) \to \ell^{2}(\wh{K}_{q-1})$
is given by
\[
(\wh{\alpha}_{q-1}^{*}g)(t)= \frac{1}{\sqrt{\deg (t)}} \sum_{s \in (\wh{K}_{q})_{t}}g(s).
\]
We have $\wh{\alpha}_{q-1}^{*}\wh{\alpha}_{q-1}=I$ on $\ell^{2}(\wh{K}_{q-1})$,
and thus $\wh{\alpha}_{q-1}\wh{\alpha}_{q-1}^{*}$ is a projection on $\ell^{2}(\wh{K}_{q})$.
Therefore, the operator $C_{q}:\ell^{2}(\wh{K}_{q}) \to \ell^{2}(\wh{K}_{q})$ defined as
\begin{equation}\label{coin1}
C_{q}=2\wh{\alpha}_{q-1}\wh{\alpha}_{q-1}^{*} -I
\end{equation}
is a unitary operator satisfying $C_{q}^{2}=I$. The operator $C_{q}$ in $\eqref{coin1}$ is used in \cite{MOS} as a `coin' operator
to define quantum walks, called S-quantum walks. To define an operator closely related to S-quantum walks,
one need to prepare a `shift' operator.
We use the projection
$\wh{P}_{q}:\ell^{2}(\wh{K}_{q}) \to \ell^{2}(\wh{K}_{q})$ defined by
\[
(\wh{P}_{q}f)(s)=\frac{1}{(q+1)!} \sum_{\pi \in \mf{S}_{q+1}} \sgn(\pi) f(s^{\pi}) \quad
(f \in \ell^{2}(\wh{K}_{q})).
\]
We note that $\im(\wh{P}_{q})=C^{q}(\kcal,\mb{C})$ which is the cochain group of dimension $q$.
\begin{defin}\label{sgrod}
We define a unitary operator $G_{q}:\ell^{2}(\wh{K}_{q}) \to \ell^{2}(\wh{K}_{q})$ by the formula
\[
G_{q}=S_{q}C_{q},\quad S_{q}=2\wh{P}_{q}-I.
\]
The corresponding discriminant operator $D(G_{q}):\ell^{2}(\wh{K}_{q-1}) \to \ell^{2}(\wh{K}_{q-1})$ is
defined by
\[
D(G_{q})=\wh{\alpha}_{q-1}^{*}G_{q} \wh{\alpha}_{q-1}=\wh{\alpha}_{q-1}^{*}S_{q} \wh{\alpha}_{q-1}.
\]
\end{defin}

\begin{rem}
It should be remarked that for $\pi \in \mf{S}_{q+1}$ the operator $S^{\pi}:\ell^{2}(\wh{K}_{q}) \to \ell^{2}(\wh{K}_{q})$
defined by $(S^{\pi}f)(s)=f(s^{\pi})$ is used in \cite{MOS} instead of our $S_{q}$,
and in this case $(S^{\pi})^{2}$ need not to equal the identity.
It seems that one single choice of $\pi \in \mf{S}_{q+1}$ might not be enough to relate it with geometry.
Our shift operator $S_{q}$ defined above is to relate the combinatorial Laplacian. See Section $\ref{SIMP}$ below.
\end{rem}

\subsection{Up and down graphs and their Grover walks} \label{UPDOWN}

Let us turn to give the definitions of our main objects.
\begin{defin}
For any non-negative integer $q$ with $0 \leq q \leq \dim \kcal -1$,
the up graph $X_{q}=(V(X_{q}),E(X_{q}))$ of the simplicial complex $\kcal$,
where $V(X_{q})$ is the set of vertices and $E(X_{q})$ is the set of oriented edges, is defined as follows.
\[
\begin{split}
V(X_{q}) & = K_{q},\\
E(X_{q}) & = \{(\tau_{1},\tau_{2}) \in K_{q} \times K_{q} \,;\,
\tau_{1} \neq \tau_{2},\,\tau_{1} \neq \ol{\tau_{2}},\,\mbox{$[\tau_{1}]$ and $[\tau_{2}]$ are up neighbors}\}.
\end{split}
\]
For any non-negative integer $q$ with $1 \leq q \leq \dim \kcal$,
the down graph $Y_{q}=(V(Y_{q}), E(Y_{q}))$ of the simplicial complex $\kcal$
is defined as follows.
\[
\begin{split}
V(Y_{q}) & = K_{q},\\
E(Y_{q}) & = \{(\tau_{1},\tau_{2}) \in K_{q} \times K_{q} \,;\,
\tau_{1} \neq \tau_{2},\,\tau_{1} \neq \ol{\tau_{2}},\,\mbox{$[\tau_{1}]$ and $[\tau_{2}]$ are down neighbors}\}.
\end{split}
\]
\end{defin}
The down graph is essential the same as a dual graph discussed in \cite{HJa}.
We remark that up and down graphs $X_{q}$ and $Y_{q}$ have `redundant' edges.
Namely, if $(\sigma,\tau) \in E(Y_{q})$ then $(\sigma,\ol{\tau})$ is also an edge in $Y_{q}$.
This redundancy will be necessary to relates the operators each other.
But, in the actual computation, it would be reasonable to reduce this redundancy.
To do it, we fix an orientation $K_{q}^{o}$ of $K_{q}$ and we define the {\it reduced down graph} $Y_{q}^{r}$ by
\[
\begin{split}
V(Y_{q}^{r}) & = K_{q}^{o},\\
E(Y_{q}) & = \{(\tau_{1},\tau_{2}) \in K_{q}^{o} \times K_{q}^{o} \,;\,
\tau_{1} \neq \tau_{2},\,\tau_{1} \neq \ol{\tau_{2}},\,\mbox{$[\tau_{1}]$ and $[\tau_{2}]$ are down neighbors}\}.
\end{split}
\]
We define the {\it reduced up graph} $X_{q}^{r}$ by a similar fashion.

For any $\tau \in K_{q}$, we set
\[
\begin{split}
\deg_{X}(\tau) & =\sharp \{[\sigma] \in \scal_{q+1}\,;\, \mbox{$[\sigma]$ contains $[\tau]$ as a face}\},\\
\deg_{Y}(\tau) & =\sharp \{[\tau'] \in \scal_{q} \,;\, \mbox{$[\tau]$ and $[\tau']$ are down neighbors}\}. \\
\end{split}
\]
For a given ordered simplex $s \in \wh{K}_{q}$, we have $\deg_{X}(\ispa{s})=\deg(s)$.
For each $\tau \in K_{q}$ the degree of $\tau$ as a vertex of the graph $X_{q}$ is $2(q+1)\deg_{X}(\tau)$
and the degree of $\tau$ as a vertex of the graph $Y_{q}$ is $2\deg_{Y}(\tau)$.
If $\tau_{1},\tau_{2} \in K_{q}$ are up neighbors, then they are also a
down neighbors. Thus $E(X_{q})$ is naturally regarded as a subset of $E(Y_{q})$.

Before giving the definition of up and down Grover walk,
we need to prepare a property of signature.
Suppose that $[\tau_{1}], [\tau_{2}] \in \scal_{q}$ are up neighbors. Then there is a unique $[\sigma_{q+1}(\tau_{1},\tau_{2})] \in \scal_{q+1}$
such that $[\tau_{1}]$ and $[\tau_{2}]$ are faces of $[\sigma_{q+1}(\tau_{1},\tau_{2})]$.
Although there are two orientation of such an $[\sigma_{q+1}(\tau_{1},\tau_{2})]$, we have
\begin{equation}\label{sgn1}
\sgn(\sigma_{q+1}(\tau_{1},\tau_{2}),\tau_{1})\sgn(\sigma_{q+1}(\tau_{1},\tau_{2}),\tau_{2})
=\sgn(\ol{\sigma_{q+1}(\tau_{1},\tau_{2})},\tau_{1})\sgn(\ol{\sigma_{q+1}(\tau_{1},\tau_{2})},\tau_{2}).
\end{equation}
Likewise, For any down neighbors $[\tau_{1}], [\tau_{2}] \in \scal_{q}$,
the simplex $[\sigma_{q-1}(\tau_{1},\tau_{2})]=[\tau_{1}] \cap [\tau_{2}]$ have two orientation.
However, we have
\begin{equation}\label{sgn2}
\sgn(\tau_{1},\sigma_{q-1}(\tau_{1},\tau_{2}))\sgn(\tau_{2},\sigma_{q-1}(\tau_{1},\tau_{2}))
=\sgn(\tau_{1},\ol{\sigma_{q-1}(\tau_{1},\tau_{2})})\sgn(\tau_{2},\ol{\sigma_{q-1}(\tau_{1},\tau_{2})})
\end{equation}
The equations $\eqref{sgn1}$, $\eqref{sgn2}$
mean that the quantities in the left-hand sides of $\eqref{sgn1}$ and $\eqref{sgn2}$ do not depend on
the choice of the orientation of $[\sigma_{q+1}(\tau_{1},\tau_{2})]$ and $[\sigma_{q-1}(\tau_{1},\tau_{2})]$.
These can be deduced from a simple property of the signature.

\begin{defin}
\noindent{{\rm (1)}} \hspace{2pt}
We define operators $d_{X_{q}}:\ell^{2}(E(X_{q})) \to \ell^{2}(K_{q})$, $d_{Y_{q}}:\ell^{2}(E(Y_{q})) \to \ell^{2}(K_{q})$ by
the following formula.
\[
\begin{split}
(d_{X_{q}}g)(\tau) & = \frac{1}{\sqrt{2(q+1)\deg_{X}(\tau)}} \sum_{\tau' \in K_{q}\,;\, (\tau,\tau') \in E(X_{q})}
g(\tau,\tau') \quad (g \in \ell^{2}(E(X_{q}))),\\
(d_{Y_{q}}g)(\tau) & = \frac{1}{\sqrt{2\deg_{Y}(\tau)}} \sum_{\tau' \in K_{q}\,;\, (\tau,\tau') \in E(Y_{q})}
g(\tau,\tau') \quad (g \in \ell^{2}(E(Y_{q}))).
\end{split}
\]
\noindent{{\rm (2)}} \hspace{2pt}
The shift operators $S^{\up}:\ell^{2}(E(X_{q})) \to \ell^{2}(E(X_{q}))$ and $S^{\down}:\ell^{2}(E(Y_{q})) \to \ell^{2}(E(Y_{q}))$ are defined as
\[
\begin{split}
(S^{\up}g)(\tau_{1},\tau_{2}) & = \eta^{\up}(\tau_{1},\tau_{2}) g(\tau_{2},\tau_{1}) \quad (g \in \ell^{2}(E(X_{q}))),\\
(S^{\down}g)(\tau_{1},\tau_{2}) & = \eta^{\down}(\tau_{1},\tau_{2}) g(\tau_{2},\tau_{1}) \quad (g \in \ell^{2}(E(Y_{q}))),
\end{split}
\]
where the functions $\eta^{\up}$ on $E(X_{q})$ and $\eta^{\down}$ on $E(Y_{q})$ are defined as
\[
\begin{split}
\eta^{\up}(\tau_{1},\tau_{2}) & =\sgn(\sigma_{q+1}(\tau_{1},\tau_{2}),\tau_{1})
\sgn(\sigma_{q+1}(\tau_{1},\tau_{2}),\tau_{2}) \quad ((\tau_{1},\tau_{2}) \in E(X_{q})), \\
\eta^{\down}(\tau_{1},\tau_{2}) & = \sgn(\tau_{1},\sigma_{q-1}(\tau_{1},\tau_{2}))
\sgn(\tau_{2},\sigma_{q-1}(\tau_{1},\tau_{2})) \quad ((\tau_{1},\tau_{2}) \in E(Y_{q})).
\end{split}
\]
\noindent{{\rm (3)}} \hspace{2pt}
The up-Grover walk $U_{q}^{\up}:\ell^{2}(E(X_{q})) \to \ell^{2}(E(X_{q}))$
and the down-Grover walk $U_{q}^{\down}:\ell^{2}(E(Y_{q})) \to \ell^{2}(E(Y_{q}))$ are defined as
\[
\begin{split}
U_{q}^{\up} & = S^{\up} (2d_{X_{q}}^{*}d_{X_{q}} -I), \\
U_{q}^{\down} & = S^{\down} (2d_{Y_{q}}^{*}d_{Y_{q}}-I).
\end{split}
\]
\noindent{{\rm (4)}} The discriminant operators $D_{q}^{\up}:=D(U_{q}^{\up}):\ell^{2}(K_{q}) \to \ell^{2}(K_{q})$,
$D_{q}^{\down}:=D(U_{q}^{\down}):\ell^{2}(K_{q}) \to \ell^{2}(K_{q})$ are given by the following formula.
\[
D_{q}^{\up}=d_{X_{q}} S^{\up} d_{X_{q}}^{*},\quad
D_{q}^{\down}=d_{Y_{q}} S^{\down} d_{Y_{q}}^{*}.
\]
\end{defin}
In the following some remarks and simple properties deduced from the definitions are listed.
\begin{enumerate}
\item The operators $d_{X_{q}}$ and $d_{Y_{q}}$ are bounded operators whose operator norms are not greater than $1$.
\item The operators $d_{X_{q}}^{*}$ and $d_{Y_{q}}^{*}$ are
adjoint operators whose concrete forms are given by
\begin{equation}\label{dadj}
\begin{split}
(d_{X_{q}}^{*}f)(\tau_{1},\tau_{2}) & = \frac{1}{\sqrt{2(q+1)\deg_{X}(\tau_{1})}} f(\tau_{1}) \quad (f \in \ell^{2}(K_{q})), \\
(d_{Y_{q}}^{*}f) (\tau_{1},\tau_{2}) & = \frac{1}{\sqrt{2\deg_{Y}(\tau_{1})}} f(\tau_{1}) \quad (f \in \ell^{2}(K_{q})).
\end{split}
\end{equation}
These satisfies $d_{X_{q}}d_{X_{q}}^{*}=I$, $d_{Y_{q}} d_{Y_{q}}^{*}=I$ on $\ell^{2}(K_{q})$.
Thus, $d_{X_{q}}^{*}d_{X_{q}}$ and $d_{Y_{q}}^{*}d_{Y_{q}}$ are projections,
and hence $2d_{X_{q}}^{*}d_{X_{q}} -I$ and $2d_{Y_{q}}^{*}d_{Y_{q}}-I$ are unitary operators.
\item It is straightforward to check that $S^{\up}$ and $S^{\down}$ are unitary operators and
they satisfy $(S^{\up})^{2}=I$, $(S^{\down})^{2}=I$.
Therefore, $U_{q}^{\up}$ and $U_{q}^{\down}$ are regarded as an abstract Szegedy walk (\cite{SS}).
The definition of these unitary operators comes from, essentially, the description of
unitary operators in \cite{HKSS}, except one point that we adjust the definition of the shift operators
to take the orientation of simplices into account.
\item It would be useful to give concrete forms of the discriminant operators
which, for $g \in \ell^{2}(K_{q})$, are given as
\begin{equation}\label{disc22}
\begin{split}
(D_{q}^{\up}g)(\tau) & =\sum_{\tau' \in K_{q}\,;\,(\tau,\tau') \in E(X_{q})}
\frac{1}{\sqrt{2(q+1)\deg_{X}(\tau)} \sqrt{2(q+1)\deg_{X}(\tau')}}
\eta^{\up}(\tau,\tau')g(\tau'), \\
(D_{q}^{\down}g)(\tau) & = \sum_{\tau' \in K_{q}\,;\,(\tau,\tau') \in E(Y_{q})}
\frac{1}{\sqrt{2\deg_{Y}(\tau)} \sqrt{2\deg_{Y}(\tau')}}
\eta^{\down}(\tau,\tau')g(\tau').
\end{split}
\end{equation}
\item It would be reasonable to note that the function $\eta_{q}^{\up}$
satisfies
\begin{equation}\label{sgneta}
\eta_{q}^{\up}(\ol{\tau_{1}},\tau_{2})=\eta_{q}^{\up}(\tau_{1},\ol{\tau_{2}})=-\eta_{q}^{\up}(\tau_{1},\tau_{2}),
\end{equation}
where $(\tau_{1},\tau_{2}) \in E(X_{q})$, and similar formula also holds for $\eta_{q}^{\down}$.
\end{enumerate}

\section{Fundamental properties of up and down Grover walks} \label{SIMP}

In this section, we shall investigate some fundamental properties of unitary operators $G_{q}$, $U_{q}^{\up}$, $U_{q}^{\down}$.
Since the spectral structures of these unitary operators are almost determined by their discriminant operators,
we mainly investigate properties of their discriminant operators.

\subsection{Another description for discriminants} \label{ANOT}

First of all, we show that the discriminants $D_{q}^{\up}$, $D_{q}^{\down}$ are essentially defined on
the cochain group, $C^{q}(\kcal,\mb{C})$.
\begin{thm}\label{ker1}
Let $P:\ell^{2}(K_{q}) \to \ell^{2}(K_{q})$ be the projection onto $C^{q}(\kcal,\mb{C})$. Then, we have
\[
D_{q}^{\up}=PD_{q}^{\up}P,\quad D_{q}^{\down}=PD_{q}^{\down}P.
\]
In particular, $C^{q}(\kcal,\mb{C})$ is invariant under $D_{q}^{\up}$ and $D_{q}^{\down}$,
and $D_{q}^{\up}C_{+}^{q}(\kcal,\mb{C})=D_{q}^{\down}C^{q}_{+}(\kcal,\mb{C})=0$.
\end{thm}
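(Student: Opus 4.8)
The plan is to exploit the sign antisymmetry $\eqref{sgneta}$ of $\eta_q^{\up}$ (and its down analogue) directly at the level of the concrete formulas $\eqref{disc22}$. First I would record the elementary fact that on $\ell^2(K_q)$ the operator $P$ sending $f$ to $\frac{1}{2}(f(\tau)-f(\ol\tau))$ is the orthogonal projection onto $C^q(\kcal,\mb C)$, and $I-P$ sends $f$ to $\frac12(f(\tau)+f(\ol\tau))$ and projects onto $C_+^q(\kcal,\mb C)$; this is precisely the orthogonal decomposition $\ell^2(K_q)=C^q\oplus C^q_+$ recalled in Section $\ref{FSPACE}$. So it suffices to prove two things: (i) $D_q^{\up}$ and $D_q^{\down}$ kill $C_+^q(\kcal,\mb C)$, i.e. $D_q^{\up}(I-P)=D_q^{\down}(I-P)=0$; and (ii) their ranges lie in $C^q(\kcal,\mb C)$, i.e. $(I-P)D_q^{\up}=(I-P)D_q^{\down}=0$. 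Together these give $D_q^{\up}=PD_q^{\up}=PD_q^{\up}P$ and likewise for the down operator, which is the assertion, and the invariance statements follow immediately.

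For (i), let $g\in C_+^q(\kcal,\mb C)$, so $g(\ol{\tau'})=g(\tau')$ for all $\tau'$. In the sum for $(D_q^{\up}g)(\tau)$ over $\tau'$ with $(\tau,\tau')\in E(X_q)$, the edges come in pairs $\{\tau',\ol{\tau'}\}$ since $(\tau,\tau')\in E(X_q)$ iff $(\tau,\ol{\tau'})\in E(X_q)$, the degrees satisfy $\deg_X(\ol{\tau'})=\deg_X(\tau')$, and by $\eqref{sgneta}$ we have $\eta_q^{\up}(\tau,\ol{\tau'})=-\eta_q^{\up}(\tau,\tau')$. Hence the contributions of $\tau'$ and $\ol{\tau'}$ cancel, so $(D_q^{\up}g)(\tau)=0$; the argument for $D_q^{\down}$ is verbatim the same using the down-analogue of $\eqref{sgneta}$. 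This proves $D_q^{\up}C_+^q=D_q^{\down}C_+^q=0$, which is the last displayed claim of the theorem.

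For (ii), I would show that $(D_q^{\up}g)(\ol\tau)=-(D_q^{\up}g)(\tau)$ for every $g\in\ell^2(K_q)$, so the image automatically lies in $C^q(\kcal,\mb C)$. From $\eqref{disc22}$, replacing $\tau$ by $\ol\tau$ reindexes the sum over the same set of neighbors (again $(\ol\tau,\tau')\in E(X_q)$ iff $(\tau,\tau')\in E(X_q)$), leaves the square-root normalization unchanged since $\deg_X(\ol\tau)=\deg_X(\tau)$, and multiplies each $\eta_q^{\up}(\tau,\tau')$ by $-1$ via the first equality in $\eqref{sgneta}$. Hence $(D_q^{\up}g)(\ol\tau)=-(D_q^{\up}g)(\tau)$, i.e. $D_q^{\up}g\in C^q(\kcal,\mb C)$, so $(I-P)D_q^{\up}=0$; the down case is identical. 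Combining with (i) yields $D_q^{\up}=PD_q^{\up}P$ and $D_q^{\down}=PD_q^{\down}P$, and the invariance of $C^q(\kcal,\mb C)$ is then immediate.

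The computations here are genuinely routine once $\eqref{sgneta}$ and the pairing of edges $(\tau,\tau')\leftrightarrow(\tau,\ol{\tau'})$ are in hand; the only point requiring a little care — the mild obstacle — is bookkeeping the edge set structure, namely verifying that reflecting either endpoint of an edge preserves membership in $E(X_q)$ (resp. $E(Y_q)$) and that $\deg_X$, $\deg_Y$ depend only on the underlying unoriented simplex $[\tau]\in\scal_q$, so that the index sets and normalizations in $\eqref{disc22}$ are manifestly symmetric under $\tau\mapsto\ol\tau$ and $\tau'\mapsto\ol{\tau'}$. These facts are immediate from the definitions of $E(X_q)$, $E(Y_q)$, $\deg_X$ and $\deg_Y$ in Section $\ref{UPDOWN}$, so no real difficulty arises. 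An alternative, slightly slicker route would be to argue abstractly: note $d_{X_q}^*$ maps into the $\eta^{\up}$-twisted symmetric part and $d_{X_q}$ intertwines $P$ with the edge-level sign involution, but the direct computation above is shortest and self-contained.
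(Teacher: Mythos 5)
Your proof is correct and takes essentially the same route as the paper: the paper's own (much terser) proof likewise writes $Pg(\tau)=\frac{1}{2}(g(\tau)-g(\ol{\tau}))$, observes that $(\tau,\tau')\in E(X_{q})$ implies $(\tau,\ol{\tau'})\in E(X_{q})$, and appeals to $\eqref{disc22}$ and $\eqref{sgneta}$. You have simply made explicit the two cancellation arguments (vanishing on $C_{+}^{q}(\kcal,\mb{C})$ and antisymmetry of the output) that the paper leaves to the reader.
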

\begin{proof}
The projection $P$ onto $C^{q}(\kcal,\mb{C})$ is defined by $\dsp Pg(\tau)=\frac{1}{2}(g(\tau)-g(\ol{\tau}))$.
If $(\tau,\tau') \in E(X_{q})$ then $(\tau,\ol{\tau'})$ is also in $E(X_{q})$. Thus, equations $\eqref{disc22}$ and $\eqref{sgneta}$
show the proposition.
\end{proof}
\begin{cor}
The up and down Grover walks $U_{q}^{\up}$, $U_{q}^{\down}$ have always eigenvalue $\pm i$.
\end{cor}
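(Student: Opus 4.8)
The plan is to exploit the structure of an abstract Szegedy walk $U = SC$ together with the fact, just established in Theorem~\ref{ker1}, that the discriminant operators kill the symmetric part $C_+^q(\kcal,\mb{C})$. Recall the general spectral correspondence for abstract Szegedy walks (as in \cite{SS}, \cite{HKSS}): writing $U_q^{\up}=S^{\up}(2d_{X_q}^*d_{X_q}-I)$ and setting $\mathcal{A}=\im(d_{X_q}^*)$, the eigenvalues of $U_q^{\up}$ that are not $\pm 1$ are in bijection (via $\lambda = e^{\pm i\theta}$, $\cos\theta = \mu$) with the eigenvalues $\mu$ of $D_q^{\up}=d_{X_q}S^{\up}d_{X_q}^*$ lying in $(-1,1)$, while the eigenvalue $+1$ (resp.\ $-1$) of $U_q^{\up}$ corresponds to $\mu=1$ (resp.\ $\mu=-1$) together with a contribution from the ``non-interacting'' part of the walk. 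The eigenvalues $\pm i$ of $U_q^{\up}$ correspond precisely to $\mu = 0$ in the spectrum of $D_q^{\up}$ --- more precisely, to the kernel of $D_q^{\up}$ intersected with $\mathcal{A}$, suitably interpreted.

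First I would make this precise. The key identity is that $d_{X_q}^*$ isometrically embeds $\ell^2(K_q)$ into $\ell^2(E(X_q))$ with $d_{X_q}d_{X_q}^*=I$, so $\Pi := d_{X_q}^*d_{X_q}$ is the orthogonal projection onto $\mathcal{A}$. A standard computation for $U=S(2\Pi - I)$ shows that on the subspace $\mathcal{A}+S\mathcal{A}$ the walk decomposes into $2\times 2$ blocks governed by $D=d_{X_q}S d_{X_q}^*$, and an eigenvector of $D$ for eigenvalue $\mu\in(-1,1)$ lifts to eigenvectors of $U$ for $e^{\pm i\theta}$ with $\cos\theta=\mu$. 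In particular, if $\phi\in\ell^2(K_q)$ satisfies $D_q^{\up}\phi = 0$ and $\phi\ne 0$, then the vectors $d_{X_q}^*\phi \pm i\,S^{\up}d_{X_q}^*\phi$ are (up to normalization and checking nonvanishing) eigenvectors of $U_q^{\up}$ with eigenvalues $\pm i$. So it suffices to show $\ker D_q^{\up}\ne\{0\}$, and likewise for $D_q^{\down}$.

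Now Theorem~\ref{ker1} does exactly this: it asserts $D_q^{\up}=PD_q^{\up}P$ and $D_q^{\up}C_+^q(\kcal,\mb{C})=0$, where $P$ is the projection onto $C^q(\kcal,\mb{C})$. Since $\ell^2(K_q)=C^q(\kcal,\mb{C})\oplus C_+^q(\kcal,\mb{C})$ is a nontrivial orthogonal decomposition --- $C_+^q(\kcal,\mb{C})\cong\ell^2(\scal_q)$ is nonzero because $\scal_q\ne\emptyset$ for $0\le q\le\dim\kcal$ under the standing assumptions --- the space $C_+^q(\kcal,\mb{C})$ is a nonzero subspace of $\ker D_q^{\up}$. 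The same argument applies verbatim to $D_q^{\down}$. Feeding any nonzero $\phi\in C_+^q(\kcal,\mb{C})$ into the construction of the previous paragraph produces eigenvectors of $U_q^{\up}$ and $U_q^{\down}$ for the eigenvalues $\pm i$, provided the lifted vectors are nonzero; this nonvanishing is automatic since $d_{X_q}^*$ is injective.

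The main obstacle is purely bookkeeping: verifying that the lift $\phi\mapsto d_{X_q}^*\phi\pm iS^{\up}d_{X_q}^*\phi$ genuinely lands in the $\pm i$-eigenspace, i.e.\ checking $S^{\up}(2d_{X_q}^*d_{X_q}-I)(d_{X_q}^*\phi\pm iS^{\up}d_{X_q}^*\phi)=\pm i(d_{X_q}^*\phi\pm iS^{\up}d_{X_q}^*\phi)$, which reduces to $d_{X_q}d_{X_q}^*=I$, $(S^{\up})^2=I$, $D_q^{\up}\phi=d_{X_q}S^{\up}d_{X_q}^*\phi=0$, and the observation that $d_{X_q}^*\phi$ and $S^{\up}d_{X_q}^*\phi$ are linearly independent (again because $\phi\ne 0$, $d_{X_q}^*$ injective, and $\mu=0\ne\pm1$). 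None of these steps is deep; the content is entirely in Theorem~\ref{ker1}, which has already been proved. Hence the corollary follows.
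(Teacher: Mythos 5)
Your proposal is correct and follows essentially the same route as the paper: the paper likewise invokes the Szegedy spectral correspondence $t\mapsto t\pm i\sqrt{1-t^{2}}$ (citing \cite{SS}) applied to the eigenvalue $0$ of the discriminants, which comes from $C_{+}^{q}(\kcal,\mb{C})\subset\ker D_{q}^{\up}\cap\ker D_{q}^{\down}$ via Theorem~\ref{ker1}. You merely make the lift explicit (note the eigenvector for $+i$ is $d_{X_q}^{*}\phi-iS^{\up}d_{X_q}^{*}\phi$, consistent with the $\psi$ used later in Proposition~\ref{FPP2}), so no substantive difference.
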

\begin{proof}
Let us denote $\spec(A)$ and $\spec_{p}(A)$ the spectrum and the
set of eigenvalues of an operator $A$, respectively. Then, it is well-known (\cite{SS})
that if $t \in \spec(D_{q}^{\up})$ then $t \pm i\sqrt{1-t^{2}} \in \spec(U_{q}^{\up})$,
and if $t \in \spec_{p}(D_{q}^{\up})$ then $t\pm i\sqrt{1-t^{2}} \in \spec_{p}(U_{q}^{\up})$.
\end{proof}

By Proposition $\ref{ker1}$, it turns out that we only need to consider the discriminant operators on $C^{q}(\kcal,\mb{C})$.
The discriminant operators have a nice representation which
are shown in the following proposition.
\begin{prop}\label{ano1}
We define operators $a_{q}, b_{q}:C^{q}(\kcal,\mb{C}) \to C^{q+1}(\kcal,\mb{C})$ by
\[
\begin{split}
(a_{q}f)(\sigma) & = \frac{1}{2}\sum_{\tau \in K_{q}} \frac{1}{\sqrt{\deg_{X}(\tau)}}\sgn(\sigma,\tau) f(\tau),\\
(b_{q}f)(\sigma) & = \frac{1}{2}\sum_{\tau \in K_{q}} \frac{1}{\sqrt{\deg_{Y}(\sigma)}} \sgn(\sigma,\tau) f(\tau)
\end{split}
\]
for $f \in C^{q}(\kcal,\mb{C})$ and $\sigma \in K_{q+1}$.
We also define an operator $A_{q}^{\down}:\ell^{2}(K_{q}) \to \ell^{2}(K_{q})$ by $\dsp (A_{q}^{\down}f)(\tau)=\frac{1}{\sqrt{\deg_{Y}(\tau)}}f(\tau)$.
Then, on the subspace $C^{q}(\kcal,\mb{C})$, we have the following formulas.
\[
\begin{split}
D_{q}^{\up} & = \frac{1}{q+1} (a_{q}^{*}a_{q}-I),\\
D_{q}^{\down} & =  b_{q-1}b_{q-1}^{*}-(q+1)(A_{q}^{\down})^{2}.
\end{split}
\]
\end{prop}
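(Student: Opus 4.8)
The plan is to compute $a_{q}^{*}a_{q}$ and $b_{q-1}b_{q-1}^{*}$ directly and to compare the outcome with the explicit formulas $\eqref{disc22}$ for $D_{q}^{\up}$, $D_{q}^{\down}$. By Proposition~$\ref{ker1}$ it suffices to verify the two identities on $C^{q}(\kcal,\mb{C})$, and the standing assumptions of Section~$\ref{AS}$ (purity and strong connectivity) guarantee that $\deg_{X}(\tau)>0$ for $\tau\in K_{q}$, $q\le\dim\kcal-1$, and $\deg_{Y}(\sigma)>0$ for $\sigma\in K_{q}$, $q\ge1$, so all square roots below make sense. Since $\sgn(\cdot,\cdot)$ is real-valued, a one-line pairing computation gives the adjoints: for $h\in C^{q+1}(\kcal,\mb{C})$,
\[
(a_{q}^{*}h)(\tau)=\frac{1}{2\sqrt{\deg_{X}(\tau)}}\sum_{\sigma\in K_{q+1}}\sgn(\sigma,\tau)h(\sigma)\quad(\tau\in K_{q}),
\]
and for $h\in C^{q}(\kcal,\mb{C})$,
\[
(b_{q-1}^{*}h)(\tau)=\frac{1}{2}\sum_{\sigma\in K_{q}}\frac{\sgn(\sigma,\tau)}{\sqrt{\deg_{Y}(\sigma)}}h(\sigma)\quad(\tau\in K_{q-1});
\]
in both cases $\sgn(\sigma,\ol{\tau})=-\sgn(\sigma,\tau)$ shows that the right-hand sides automatically lie in the relevant antisymmetric cochain space, so these really are the adjoints as maps between cochain groups.

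Composing, one gets
\[
(a_{q}^{*}a_{q}f)(\tau)=\frac{1}{4\sqrt{\deg_{X}(\tau)}}\sum_{\tau'\in K_{q}}\frac{f(\tau')}{\sqrt{\deg_{X}(\tau')}}\,\Sigma_{X}(\tau,\tau'),\quad \Sigma_{X}(\tau,\tau')=\sum_{\sigma\in K_{q+1}}\sgn(\sigma,\tau)\sgn(\sigma,\tau'),
\]
\[
(b_{q-1}b_{q-1}^{*}h)(\sigma)=\frac{1}{4\sqrt{\deg_{Y}(\sigma)}}\sum_{\sigma'\in K_{q}}\frac{h(\sigma')}{\sqrt{\deg_{Y}(\sigma')}}\,\Sigma_{Y}(\sigma,\sigma'),\quad \Sigma_{Y}(\sigma,\sigma')=\sum_{\tau\in K_{q-1}}\sgn(\sigma,\tau)\sgn(\sigma',\tau).
\]
The heart of the argument is the evaluation of these sign sums, which rests only on elementary properties of $\sgn(\cdot,\cdot)$. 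For $\Sigma_{X}$: if $\tau'=\tau$ then $\Sigma_{X}(\tau,\tau)=\#\{\sigma\in K_{q+1}:[\tau]\subset[\sigma]\}=2\deg_{X}(\tau)$ (the factor $2$ counting the two orientations of each $(q+1)$-simplex); if $\tau'=\ol{\tau}$ then $\Sigma_{X}(\tau,\ol{\tau})=-2\deg_{X}(\tau)$; if $[\tau],[\tau']$ are up neighbors then only the two oriented representatives of the unique simplex $[\sigma_{q+1}(\tau,\tau')]\in\scal_{q+1}$ contribute, and by $\eqref{sgn1}$ each contributes $\eta^{\up}(\tau,\tau')$, so $\Sigma_{X}(\tau,\tau')=2\eta^{\up}(\tau,\tau')$; in all remaining cases $\Sigma_{X}(\tau,\tau')=0$. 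For $\Sigma_{Y}$: if $\sigma'=\sigma$ then $\Sigma_{Y}(\sigma,\sigma)=\#\{\tau\in K_{q-1}:[\tau]\subset[\sigma]\}=2(q+1)$, since a $q$-simplex has exactly $q+1$ faces of dimension $q-1$; if $\sigma'=\ol{\sigma}$ then $\Sigma_{Y}(\sigma,\ol{\sigma})=-2(q+1)$; if $[\sigma],[\sigma']$ are down neighbors they share only the $(q-1)$-face $[\sigma]\cap[\sigma']$ (two distinct $q$-simplices cannot share two $(q-1)$-faces), and $\eqref{sgn2}$ gives $\Sigma_{Y}(\sigma,\sigma')=2\eta^{\down}(\sigma,\sigma')$; otherwise $\Sigma_{Y}(\sigma,\sigma')=0$.

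Substituting these values and using that on $C^{q}(\kcal,\mb{C})$ one has $f(\ol{\tau})=-f(\tau)$, $\deg_{X}(\ol{\tau})=\deg_{X}(\tau)$ and $\deg_{Y}(\ol{\sigma})=\deg_{Y}(\sigma)$, the contributions $\tau'=\tau$ and $\tau'=\ol{\tau}$ collapse to $f(\tau)$ in the first computation, and the contributions $\sigma'=\sigma$ and $\sigma'=\ol{\sigma}$ collapse to $\tfrac{q+1}{\deg_{Y}(\sigma)}h(\sigma)=(q+1)((A_{q}^{\down})^{2}h)(\sigma)$ in the second; the remaining (up- resp. down-neighbor) terms coincide with $(q+1)(D_{q}^{\up}f)(\tau)$ resp. $(D_{q}^{\down}h)(\sigma)$ in view of $\eqref{disc22}$. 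This yields $a_{q}^{*}a_{q}=I+(q+1)D_{q}^{\up}$ and $b_{q-1}b_{q-1}^{*}=(q+1)(A_{q}^{\down})^{2}+D_{q}^{\down}$ on $C^{q}(\kcal,\mb{C})$ (which are both invariant subspaces by Proposition~$\ref{ker1}$), i.e. the two asserted formulas. I expect the one genuinely error-prone point to be keeping the factor $2$ coming from the two orientations of each simplex consistent through both the sign-sum evaluation and the final coefficient matching; the geometric inputs needed --- uniqueness of $[\sigma_{q+1}(\tau_{1},\tau_{2})]$ and of the common $(q-1)$-face of down neighbors, together with the well-definedness identities $\eqref{sgn1}$ and $\eqref{sgn2}$ --- are all already recorded in the excerpt.
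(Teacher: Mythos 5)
Your proof is correct and follows essentially the same route as the paper: compute the adjoints $a_q^*$, $b_{q-1}^*$, evaluate the sign sums $\sum_{\sigma}\sgn(\sigma,\tau)\sgn(\sigma,\tau')$ by cases, and match the result against the explicit formulas $\eqref{disc22}$. You carry out the "direct computation" the paper leaves implicit, and in fact handle the $\tau'=\ol{\tau}$ contribution more carefully than the paper's case list (which lumps it under $[\tau]=[\tau']$ with value $2\deg_X(\tau)$, whereas the sum there is $-2\deg_X(\tau)$, as you correctly note).
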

\begin{proof}
The adjoint operators of $a_{q}$ and $b_{q-1}$ are given by
\[
(a_{q}^{*}g) (\tau) = \frac{1}{2}
\sum_{\sigma \in K_{q+1}} \frac{\sgn(\sigma,\tau)}{\sqrt{\deg_{X}(\tau)}} g(\sigma), \quad
(b_{q-1}^{*}f) (\mu) = \frac{1}{2}
\sum_{\tau' \in K_{q}} \frac{\sgn(\tau',\mu)}{\sqrt{\deg_{Y}(\tau')}} f(\tau'),
\]
where $g \in C^{q+1}(\kcal,\mb{C})$, $f \in C^{q}(\kcal,\mb{C})$, $\tau \in K_{q}$ and $\mu \in K_{q-1}$.
For $\tau,\tau' \in K_{q}$, we have
\[
\sum_{\sigma \in K_{q+1}} \sgn(\sigma,\tau)\sgn(\sigma,\tau')=
\begin{cases}
0 & \mbox{if $[\tau] \neq [\tau']$ and $[\tau]$ and $[\tau']$ are not up neighbors,} \\
2\eta_{q}^{up}(\tau,\tau') & \mbox{if $[\tau] \neq [\tau']$ and $[\tau]$ and $[\tau']$ are up neighbors,} \\
2\deg_{X}(\tau) & \mbox{if $[\tau] = [\tau']$.}
\end{cases}
\]
Similar property holds for down neighbors. The statement
follows by a direct computation using these formulas.
\end{proof}

\begin{cor}\label{anoC}
Let $\spec(A)$ be the spectrum of an operator $A$. Then the following hold.
\begin{enumerate}
\item Suppose that our simplicial complex $\kcal=(V,\scal)$ is finite.
Suppose further that each $q$-dimensional simplex in $\scal$ is contained in
exactly $L$ $(q+1)$-dimensional simplices. Then we have
\[
\spec\left(
(q+2) +(L-1)(q+2)D_{q+1}^{\down}
\right)
\overset{{\scriptstyle \circ}}{=}
\spec\left(
L+L(q+1)D_{q}^{\up}
\right),
\]
where ${\rm S}(A) \overset{{\scriptstyle \circ}}{=} {\rm S}(B)$ means that the eigenvalues of $A$ and $B$ differ
only in the multiplicities of zero and other eigenvalues are the same with the same multiplicities.
\item For infinite simplicial complex with the same assumtion as in {\rm (1)},
we have
\[
\spec\left(
(q+2) +(L-1)(q+2)D_{q+1}^{\down}
\right)
\overset{{\scriptstyle \circ}}{=}
\spec\left(
L+L(q+1)D_{q}^{\up}
\right),
\]
where ${\rm spec}(A) \overset{{\scriptstyle \circ}}{=} {\rm spec}(B)$ means the spectrum of $A$ and $B$
differ only in zero.
\item Let $\kcal=(V,\scal)$ be the $(n-1)$-dimensional simplex.
Then we have
\[
\spec\left(
(q+2) +(n-q-2)(q+2)D_{q+1}^{\down}
\right)
\overset{{\scriptstyle \circ}}{=}
\spec\left(
(n-q-1)+(n-q-1)(q+1)D_{q}^{\up}
\right),
\]
\end{enumerate}
\end{cor}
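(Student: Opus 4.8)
The plan is to reduce both sides to the two operators $a_{q}^{*}a_{q}$ and $a_{q}a_{q}^{*}$ built from the map $a_{q}$ of Proposition~\ref{ano1}, and then invoke the elementary fact that for a bounded operator $T$ between Hilbert spaces, $T^{*}T$ and $TT^{*}$ have the same spectrum away from $0$, with equal multiplicities for every nonzero eigenvalue. By Proposition~\ref{ker1} all the discriminants may be regarded as acting on the cochain groups, where Proposition~\ref{ano1} applies, and this is the setting I would work in throughout.

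First I would record two degree counts valid under the hypothesis of (1), which says precisely that $\deg_{X}(\tau)=L$ for every $\tau\in K_{q}$. For $\sigma\in K_{q+1}$ one has $\deg_{Y}(\sigma)=(q+2)(L-1)$: the simplex $[\sigma]$ has $q+2$ faces of dimension $q$, each lying in $L$ simplices of $\scal_{q+1}$ and hence shared by $\sigma$ with $L-1$ other $(q+1)$-simplices, and these $(q+2)(L-1)$ down neighbours are pairwise distinct because two different $(q+1)$-simplices share at most one common $q$-face (two distinct $(q+1)$-element subsets of the $(q+2)$ vertices of $[\sigma]$ together exhaust all of them). Comparing the formulas defining $a_{q}$ and $b_{q}$ in Proposition~\ref{ano1}, this gives the proportionality $b_{q}=\sqrt{L/((q+2)(L-1))}\,a_{q}$ as operators $C^{q}(\kcal,\mb{C})\to C^{q+1}(\kcal,\mb{C})$, together with $(A_{q+1}^{\down})^{2}=\frac{1}{(q+2)(L-1)}I$ on $C^{q+1}(\kcal,\mb{C})$.

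Next I would substitute these into the formulas of Proposition~\ref{ano1}. From $D_{q+1}^{\down}=b_{q}b_{q}^{*}-(q+2)(A_{q+1}^{\down})^{2}$ (the $q\mapsto q+1$ instance), after cancelling constants, one obtains
\[
(q+2)+(L-1)(q+2)\,D_{q+1}^{\down}=L\,a_{q}a_{q}^{*}\quad\text{on }C^{q+1}(\kcal,\mb{C}),
\]
and from $D_{q}^{\up}=\frac{1}{q+1}(a_{q}^{*}a_{q}-I)$ one obtains at once $L+L(q+1)\,D_{q}^{\up}=L\,a_{q}^{*}a_{q}$ on $C^{q}(\kcal,\mb{C})$. (As an independent check, expanding $L\,a_{q}a_{q}^{*}$ directly from the kernels \eqref{dadj}, \eqref{disc22} reproduces the left-hand side.) Now apply the $T^{*}T$ versus $TT^{*}$ principle with $T=\sqrt{L}\,a_{q}$; the positive scalar $L$ does not affect multiplicities, so the two displayed operators have the same spectrum off $0$, with matching nonzero eigenvalue multiplicities. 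In the finite case every spectral value is an eigenvalue, so the two spectra differ only in the multiplicity of $0$, giving (1); for infinite $\kcal$ the operator $a_{q}$ is bounded by the bounded-degree assumption and the same argument gives (2), where the difference can only occur at $0$.

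Finally, (3) is the specialization $L=n-q-1$ of (1): in the $(n-1)$-simplex a $q$-simplex is any $(q+1)$-element subset of the $n$ vertices and is a face of exactly the $n-q-1$ simplices of dimension $q+1$ obtained by adjoining one more vertex. I would note the mild restriction $L\ge2$ (i.e.\ $q\le n-3$ in (3)) under which the down graph $Y_{q+1}$ and the operator $A_{q+1}^{\down}$ are nondegenerate; the borderline case $L=1$ makes $D_{q+1}^{\down}=0$ and is checked directly (then $a_{q}a_{q}^{*}=(q+2)I$ on $C^{q+1}$, so the claim still holds). I do not expect a serious obstacle; the step needing the most care is the exact count $\deg_{Y}(\sigma)=(q+2)(L-1)$ and keeping track of the constants when substituting.
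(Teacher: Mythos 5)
Your proposal is correct and follows essentially the same route as the paper: the paper's one-line proof invokes Proposition~\ref{ano1} together with equation (2.6) of \cite{HJa}, which is exactly the $T^{*}T$ versus $TT^{*}$ identity for the (normalized) coboundary operator that you apply to $a_{q}$. You simply make explicit the degree count $\deg_{Y}(\sigma)=(q+2)(L-1)$, the constants, and the borderline case $L=1$, all of which check out.
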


\begin{proof}
The assertion follows from Proposition $\ref{ano1}$ and the equation $(2.6)$ in \cite{HJa}.
(For infinite simplicial complex, the equation $(2,6)$ in \cite{HJa} still works. See p.180 in \cite{KR}.)
\end{proof}

\subsection{Relation with certain combinatorial Laplacians}\label{LAP}

Proposition $\ref{ano1}$ makes us to discuss a relation between the discriminants $D_{q}^{\up}$, $D_{q}^{\down}$
and the combinatorial Laplacians introduced and investigated in \cite{HJa}.
To introduce the combinatorial Laplacian, we need the coboundary operator $\delta_{q}:C^{q}(\kcal,\mb{C}) \to C^{q+1}(\kcal,\mb{C})$
defined by
\[
(\delta_{q}f)(\sigma)=\frac{1}{2}\sum_{\tau \in K_{q}} \sgn(\sigma,\tau)f(\tau) \quad (f \in C^{q}(\kcal,\mb{C}),\ \sigma \in K_{q+1}).
\]
Then the combinatorial Laplacian $\lcal_{q}:C^{q}(\kcal,\mb{C}) \to C^{q}(\kcal,\mb{C})$ with the weight function $w \equiv 1$,
and the up and down Laplacian $\lcal_{q}^{\up},\lcal_{q}^{\down}:C^{q}(\kcal,\mb{C}) \to C^{q}(\kcal,\mb{C})$ are defined as
\[
\lcal_{q}=\lcal_{q}^{\up} +\lcal_{q}^{\down},\quad
\lcal_{q}^{\up}=\delta_{q}^{*}\delta_{q},\quad \lcal_{q}^{\down}=\delta_{q-1}\delta_{q-1}^{*}.
\]
We then have the following.
\begin{thm}\label{Lap2}
We define an operator $A_{q}^{\up}:\ell^{2}(K_{q}) \to \ell^{2}(K_{q})$ by
$\dsp (A_{q}^{\up}f)(\tau)=\frac{1}{\sqrt{\deg_{X}(\tau)}}f(\tau)$. Then we have the following.
\[
D_{q}^{\up} = \frac{1}{q+1}\left(
A_{q}^{\up}\lcal_{q}^{\up}A_{q}^{\up} -I
\right), \quad
D_{q}^{\down} =
A_{q}^{\down} \left(
\lcal_{q}^{\down}-(q+1)I
\right) A_{q}^{\down},
\]
where the operator $A_{q}^{\down}$ is defined in Proposition $\ref{ano1}$.
\end{thm}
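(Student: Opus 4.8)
The plan is to reduce the statement entirely to Proposition \ref{ano1} by recognizing that the operators $a_{q}$ and $b_{q-1}$ appearing there factor through the coboundary operator $\delta_{q}$ composed with the diagonal multiplication operators $A_{q}^{\up}$ and $A_{q}^{\down}$. First I would record that $A_{q}^{\up}$ and $A_{q}^{\down}$ are bounded self-adjoint operators on $\ell^{2}(K_{q})$ preserving $C^{q}(\kcal,\mb{C})$: they are multiplication by the real-valued functions $\tau \mapsto \deg_{X}(\tau)^{-1/2}$ and $\tau \mapsto \deg_{Y}(\tau)^{-1/2}$, boundedness comes from the bounded-degree assumption in Section \ref{AS}, self-adjointness is immediate, and since $\deg_{X}(\ol{\tau})=\deg_{X}(\tau)$ and $\deg_{Y}(\ol{\tau})=\deg_{Y}(\tau)$ these functions depend only on $[\tau]\in\scal_{q}$, so the antisymmetry $f(\ol{\tau})=-f(\tau)$ defining $C^{q}(\kcal,\mb{C})$ is respected.

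Comparing the formula for $\delta_{q}$ with the formulas for $a_{q}$ and $b_{q-1}$ in Proposition \ref{ano1}, I would read off directly the factorizations
\[
a_{q}=\delta_{q}\,A_{q}^{\up}:C^{q}(\kcal,\mb{C})\to C^{q+1}(\kcal,\mb{C}),\qquad
b_{q-1}=A_{q}^{\down}\,\delta_{q-1}:C^{q-1}(\kcal,\mb{C})\to C^{q}(\kcal,\mb{C}).
\]
Taking adjoints and using self-adjointness of the multiplication operators then gives $a_{q}^{*}a_{q}=A_{q}^{\up}\delta_{q}^{*}\delta_{q}A_{q}^{\up}=A_{q}^{\up}\lcal_{q}^{\up}A_{q}^{\up}$ and $b_{q-1}b_{q-1}^{*}=A_{q}^{\down}\delta_{q-1}\delta_{q-1}^{*}A_{q}^{\down}=A_{q}^{\down}\lcal_{q}^{\down}A_{q}^{\down}$, using $\lcal_{q}^{\up}=\delta_{q}^{*}\delta_{q}$ and $\lcal_{q}^{\down}=\delta_{q-1}\delta_{q-1}^{*}$.

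Finally I would substitute these two identities into the formulas of Proposition \ref{ano1}. For $D_{q}^{\up}=\frac{1}{q+1}(a_{q}^{*}a_{q}-I)$ the up-formula follows at once. For $D_{q}^{\down}=b_{q-1}b_{q-1}^{*}-(q+1)(A_{q}^{\down})^{2}$, substituting the second identity and noting that $(q+1)(A_{q}^{\down})^{2}=A_{q}^{\down}\bigl((q+1)I\bigr)A_{q}^{\down}$ (both sides are multiplication by $(q+1)\deg_{Y}(\tau)^{-1}$) yields $D_{q}^{\down}=A_{q}^{\down}(\lcal_{q}^{\down}-(q+1)I)A_{q}^{\down}$, all understood as operators on $C^{q}(\kcal,\mb{C})$. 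I do not anticipate a genuine obstacle here; the only point requiring care is the bookkeeping of domains and codomains — $\delta_{q}$ sends $C^{q}$ to $C^{q+1}$ with $A_{q}^{\up}$ applied before it and $A_{q}^{\down}$ (with the shifted index) applied after it — together with the orientation-compatibility of the multiplication operators noted above.
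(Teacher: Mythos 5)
Your proposal is correct and follows exactly the paper's own argument: the paper's proof consists precisely of the factorizations $a_{q}=\delta_{q}A_{q}^{\up}$ and $b_{q}=A_{q+1}^{\down}\delta_{q}$ followed by substitution into Proposition \ref{ano1}. You have simply filled in the routine details (self-adjointness of the multiplication operators, taking adjoints, and the identity $(q+1)(A_{q}^{\down})^{2}=A_{q}^{\down}((q+1)I)A_{q}^{\down}$) that the paper leaves implicit.
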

\begin{proof}
The operators $a_{q}$ and $b_{q}$ given in Proposition $\ref{ano1}$ are written in the form
\[
a_{q}=\delta_{q}A_{q}^{\up},\quad
b_{q}=A_{q+1}^{\down}\delta_{q}.
\]
From this and Proposition $\ref{ano1}$, the statement follows.
\end{proof}
When $n=\dim \kcal$, the $n$-th cohomology group with complex coefficient $H^{n}(\kcal,\mb{C})$ is
isomorphic to $\ker(\lcal_{n}^{\down})$, we see

\begin{cor}\label{cohom1}
Suppose that $\kcal$ is finite and $n$-dimensional.
Suppose also that each $[\tau] \in \scal_{n}$ has exactly $n+1$ down neighbors. Then
the eigenspace of $D_{n}^{\down}$ with eigenvalue $-1$ is isomorphic to $H^{n}(\kcal,\mb{C})$.
\end{cor}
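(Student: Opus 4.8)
The plan is to reduce the statement to the explicit formula for $D_n^{\down}$ furnished by Theorem $\ref{Lap2}$, together with the Hodge-type identification $\ker(\lcal_n^{\down}) \cong H^{n}(\kcal,\mb{C})$ recalled just above. First I would unwind the degree hypothesis: saying that each $[\tau] \in \scal_{n}$ has exactly $n+1$ down neighbors means precisely that $\deg_{Y}(\tau)=n+1$ for every $\tau \in K_{n}$, so that the operator $A_{n}^{\down}$ of Proposition $\ref{ano1}$ collapses to the scalar $(n+1)^{-1/2}I$ on $\ell^{2}(K_{n})$. Substituting this into the second identity of Theorem $\ref{Lap2}$ gives, on the subspace $C^{n}(\kcal,\mb{C})$,
\[
D_{n}^{\down}=\frac{1}{n+1}\bigl(\lcal_{n}^{\down}-(n+1)I\bigr)=\frac{1}{n+1}\lcal_{n}^{\down}-I .
\]

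Next I would invoke Theorem $\ref{ker1}$: since $D_{n}^{\down}=PD_{n}^{\down}P$ with $P$ the projection onto $C^{n}(\kcal,\mb{C})$ and $D_{n}^{\down}$ vanishes on the complementary subspace $C^{n}_{+}(\kcal,\mb{C})$, any eigenvector of $D_{n}^{\down}$ in $\ell^{2}(K_{n})$ with eigenvalue $-1 \neq 0$ must lie in $C^{n}(\kcal,\mb{C})$; the $C^{n}_{+}$–part cannot contribute. On $C^{n}(\kcal,\mb{C})$ the displayed identity shows that $D_{n}^{\down}f=-f$ is equivalent to $\lcal_{n}^{\down}f=0$. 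Hence the $(-1)$-eigenspace of $D_{n}^{\down}$ equals $\ker(\lcal_{n}^{\down})$. Finally, because $n=\dim\kcal$ there are no $(n+1)$-simplices, so $\delta_{n}=0$, $\lcal_{n}^{\up}=\delta_{n}^{*}\delta_{n}=0$, and $\lcal_{n}=\lcal_{n}^{\down}$, whose kernel is isomorphic to $H^{n}(\kcal,\mb{C})$ by the standard combinatorial Hodge decomposition (the fact recalled from $\cite{HJa}$ immediately before the statement).

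I do not expect a genuine obstacle here: essentially all the work is already done in Theorem $\ref{Lap2}$ and Theorem $\ref{ker1}$, and the only points that require a line of care are (i) that the degree hypothesis is exactly what turns $A_{n}^{\down}$ into a scalar multiple of the identity, and (ii) the book-keeping on $\ell^{2}(K_{n})=C^{n}(\kcal,\mb{C})\oplus C^{n}_{+}(\kcal,\mb{C})$ ensuring the symmetric part contributes nothing to the eigenvalue $-1$. If one wanted the argument to be fully self-contained, the remaining ingredient to spell out would be the isomorphism $\ker(\lcal_{n}^{\down})\cong H^{n}(\kcal,\mb{C})$, realized by representing each top cohomology class by its unique harmonic cocycle, but that is precisely the cited input.
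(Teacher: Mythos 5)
Your argument is correct and is essentially the paper's own: the authors deduce the corollary in one line from Theorem \ref{Lap2}, the scalar form of $A_{n}^{\down}$ under the degree hypothesis, and the identification $H^{n}(\kcal,\mb{C})\cong\ker(\lcal_{n}^{\down})$ stated immediately before the corollary. Your additional remarks (Theorem \ref{ker1} forcing the $(-1)$-eigenvectors into $C^{n}(\kcal,\mb{C})$, and $\lcal_{n}^{\up}=0$ in top dimension) merely make explicit what the paper leaves implicit.
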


\subsection{Relation with S-quantum walks}\label{SGRO}

Next, let us consider a relation between the up Grover walks and modified S-quantum walks defined in $\ref{sgrod}$.
The discriminant operator $D(G_{q})$ of the S-quantum walk $G_{q}$ is defined also in $\ref{sgrod}$ and
is written explicitly in the following form.
\begin{equation}\label{sgroE}
[D(G_{q})f] (t)=-f(t)+\frac{2}{(q+1)!\sqrt{\deg(t)}} \sum_{s \in (\wh{K}_{q})_{t}} \sum_{\pi \in \mf{S}_{q+1}}
\frac{\sgn(\pi)}{\sqrt{\deg(\nu_{q}(s^{\pi}))}} f(\nu_{q}(s^{\pi})),
\end{equation}
where $f \in \ell^{2}(\wh{K_{q-1}})$, $t \in \wh{K_{q-1}}$.
\begin{thm}\label{sgroT}
Let us identify $C^{q-1}(\kcal,\mb{C})$, $C^{q-1}_{+}(\kcal,\mb{C})$ as subspaces in $\ell^{2}(\wh{K}_{q-1})$
as in $\ref{FSPACE}$. Then we have the following.
\begin{enumerate}
\item For $f \in C_{+}^{q-1}(\kcal,\mb{C})$, we have $D(G_{q})f=-f$.
\item $C^{q-1}(\kcal,\mb{C})$ is an invariant subspace of $D(G_{q})$.
For $f \in C^{q-1}(\kcal,\mb{C})$, we have
\[
(I-D(G_{q}))f=\frac{2q}{q+1} (I-D_{q-1}^{\up})f.
\]
\end{enumerate}
\end{thm}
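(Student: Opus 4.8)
\noindent\emph{Proof proposal.}
The plan is to start from the operator identity
\[
D(G_q)=\wh\alpha_{q-1}^{*}S_q\wh\alpha_{q-1}=2\,\wh\alpha_{q-1}^{*}\wh P_q\wh\alpha_{q-1}-I ,
\]
which follows from $S_q=2\wh P_q-I$ and $\wh\alpha_{q-1}^{*}\wh\alpha_{q-1}=I$; since $\wh P_q$ is an orthogonal projection, $\wh\alpha_{q-1}^{*}\wh P_q\wh\alpha_{q-1}=(\wh P_q\wh\alpha_{q-1})^{*}(\wh P_q\wh\alpha_{q-1})$, so everything reduces to evaluating $\wh P_q\wh\alpha_{q-1}$ on the subspaces $C_{+}^{q-1}(\kcal,\mb C)$ and $C^{q-1}(\kcal,\mb C)$ of $\ell^{2}(\wh K_{q-1})$. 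Let $H\subset\mf S_{q+1}$ be the stabilizer of the index $q$, so $H\cong\mf S_{q}$ and $\mf S_{q+1}=\bigsqcup_{j=0}^{q}\gamma_jH$ with $\gamma_j=(j\ j{+}1\ \cdots\ q)$ and $\sgn(\gamma_j)=(-1)^{q-j}$. Writing $L_\pi$ for the unitary $(L_\pi g)(s)=g(s^{\pi})$, so that $\wh P_q=\tfrac{1}{(q+1)!}\sum_{\pi\in\mf S_{q+1}}\sgn(\pi)L_\pi$, a short computation from the definition of $\wh\alpha_{q-1}$ (using $\nu_q(s^{h})=\nu_q(s)^{\bar h}$ for $h\in H$, with $\bar h$ the induced permutation of the first $q$ indices, and that $\deg$ depends only on the underlying unordered simplex) shows that $L_{h}\wh\alpha_{q-1}f=\wh\alpha_{q-1}f$ when $f\in C_{+}^{q-1}(\kcal,\mb C)$ and $L_{h}\wh\alpha_{q-1}f=\sgn(h)\,\wh\alpha_{q-1}f$ when $f\in C^{q-1}(\kcal,\mb C)$, for every $h\in H$.

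Substituting $\sum_{\pi}\sgn(\pi)L_\pi=\sum_{j}\sgn(\gamma_j)L_{\gamma_j}\bigl(\sum_{h\in H}\sgn(h)L_{h}\bigr)$ then produces a dichotomy. For $f\in C_{+}^{q-1}(\kcal,\mb C)$ the inner operator acts on $\wh\alpha_{q-1}f$ by the scalar $\sum_{h\in H}\sgn(h)$, which is $0$ provided $|H|=q!\ge2$; hence $\wh P_q\wh\alpha_{q-1}f=0$ and $D(G_q)f=-f$, which proves (1). For $f\in C^{q-1}(\kcal,\mb C)$ the inner operator acts by $\sum_{h\in H}\sgn(h)^{2}=q!$, so
\[
\wh P_q\wh\alpha_{q-1}f=\frac{1}{q+1}\sum_{j=0}^{q}(-1)^{q-j}\,L_{\gamma_j}\wh\alpha_{q-1}f .
\]

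For (2) I would identify this with the operator $a_{q-1}$ of Proposition \ref{ano1}. For $s=(a_0\cdots a_q)\in\wh K_q$ one has $s^{\gamma_j}=(a_0\cdots a_{j-1}\,a_{j+1}\cdots a_q\,a_j)$, hence $(L_{\gamma_j}\wh\alpha_{q-1}f)(s)=(\wh\alpha_{q-1}f)(s^{\gamma_j})=\dfrac{f(t_j)}{\sqrt{\deg(t_j)}}$ with $t_j=(a_0\cdots\wh{a_j}\cdots a_q)$; writing out $(a_{q-1}f)(s)$ from its definition and pairing the two orientations $\tau,\ol\tau$ of each codimension-one face of $[s]$ via $\sgn(\sigma,\ol\tau)=-\sgn(\sigma,\tau)$ and $f(\ol\tau)=-f(\tau)$ gives $(a_{q-1}f)(s)=\sum_{j=0}^{q}(-1)^{j}\dfrac{f(t_j)}{\sqrt{\deg(t_j)}}$. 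Thus $\wh P_q\wh\alpha_{q-1}f=\tfrac{(-1)^{q}}{q+1}a_{q-1}f$ on $C^{q-1}(\kcal,\mb C)$; an entirely parallel computation (expanding $\wh\alpha_{q-1}^{*}g$ and $a_{q-1}^{*}g$ and pairing orientations of the $q$-simplices obtained by appending one vertex) shows that $\wh\alpha_{q-1}^{*}$ restricted to $C^{q}(\kcal,\mb C)$ equals $(-1)^{q}a_{q-1}^{*}$. Combining these and using $a_{q-1}f\in C^{q}(\kcal,\mb C)$, on $C^{q-1}(\kcal,\mb C)$ we obtain
\[
D(G_q)=2\,\wh\alpha_{q-1}^{*}\bigl(\wh P_q\wh\alpha_{q-1}\bigr)-I=\frac{2}{q+1}\,a_{q-1}^{*}a_{q-1}-I ,
\]
the two signs $(-1)^{q}$ cancelling; in particular $C^{q-1}(\kcal,\mb C)$ is invariant under $D(G_q)$. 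Finally, Proposition \ref{ano1} applied with $q$ replaced by $q-1$ reads $a_{q-1}^{*}a_{q-1}=q\,D_{q-1}^{\up}+I$ on $C^{q-1}(\kcal,\mb C)$, and substituting gives $(I-D(G_q))f=2f-\tfrac{2}{q+1}\bigl(q\,D_{q-1}^{\up}+I\bigr)f=\tfrac{2q}{q+1}\bigl(I-D_{q-1}^{\up}\bigr)f$, which is (2).

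The step I expect to be the main obstacle is the bookkeeping in the third paragraph: one must keep the three spaces $\ell^{2}(\wh K_{\bullet})$, $\ell^{2}(K_{\bullet})$ and the cochain groups $C^{\bullet}(\kcal,\mb C)$ straight (the last carrying twice the naive inner product), and correctly track the orientation signs $(-1)^{q}$ produced by the coset representatives $\gamma_j$ and by the face relations; a reassuring check is that these signs cancel in $\wh\alpha_{q-1}^{*}\wh P_q\wh\alpha_{q-1}=\tfrac{1}{q+1}a_{q-1}^{*}a_{q-1}$ on $C^{q-1}(\kcal,\mb C)$. Alternatively one can read (2) off the explicit formula \eqref{sgroE} directly, grouping the $\mf S_{q+1}$-sum by the omitted vertex $j=\pi(q)$ exactly as above; the operator formulation has the advantage of making the appearance of $D_{q-1}^{\up}$, and hence the role of Proposition \ref{ano1}, transparent.
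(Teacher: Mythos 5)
Your proposal is correct and follows essentially the same route as the paper's proof: the same left coset decomposition $\mf{S}_{q+1}=\bigsqcup_{j=0}^{q}\pi_{j}\mf{S}_{q}$ by the stabilizer of the last index, the same observations $\nu_{q}(s^{\mu})=\nu_{q}(s)^{\mu}$ and $\deg(t^{\mu})=\deg(t)$, and the same vanishing of $\sum_{\mu}\sgn(\mu)$ for part (1) and for the invariance of $C^{q-1}(\kcal,\mb{C})$. The only (cosmetic) difference is the last step: the paper matches the remaining $j$-sum term by term against the explicit formula for $D_{q-1}^{\up}$ using $\sgn(\ispa{s},\ispa{t})=(-1)^{q}$ and $\sgn(\ispa{s},\ispa{\nu_{q}(s^{\pi_{j}})})=(-1)^{j}$, whereas you factor it through $a_{q-1}$ and invoke Proposition \ref{ano1}, which yields the same identity.
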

\begin{proof}
We identify $\mf{S}_{q}$ with the subgroup in $\mf{S}_{q+1}$ as $\mf{S}_{q}=\{\pi \in \mf{S}_{q+1} \,;\, \pi(q)=q\}$.
Then, For any $\mu \in \mf{S}_{q}$ and $s \in \wh{K}_{q}$, $t \in \wh{K}_{q-1}$,
we see $\nu_{q}(s^{\mu})=\nu_{q}(s)^{\mu}$.
For $t \in \wh{K}_{q-1}$, $s \in (\wh{K}_{q})_{t^{\mu}}$ if and only if $s^{\mu^{-1}} \in (\wh{K}_{q})_{t}$.
In particular, $\deg(t^{\mu})=\deg(t)$.
By using $\eqref{sgroE}$, we have, for any $f \in \ell^{2}(\wh{K}_{q-1})$, $t \in \wh{K}_{q-1}$ and $\mu \in \mf{S}_{q}$,
\[
[D(G_{q})f] (t^{\mu})=-f(t^{\mu}) +
\frac{2\sgn(\mu)}{(q+1)! \sqrt{\deg(t)}}
\sum_{s \in (\wh{K}_{q})_{t}} \sum_{\pi \in \mf{S}_{q+1}}
\frac{\sgn(\pi)f(\nu_{q}(s^{\pi}))}{\sqrt{\deg(\nu_{q}(s^{\pi}))}}
\]
From this it is clear that $C^{q-1}(\kcal,\mb{C})$ is an invariant subspace of $D(G_{q})$.

For any $j=0,1,\ldots,q$, define $\pi_{j} \in \mf{S}_{q+1}$ by
\[
\pi_{j}(l)=
\begin{cases}
l & (0 \leq l \leq j-1), \\
l+1 & (j \leq l \leq q-1), \\
j & (j=q).
\end{cases}
\]
Then we have a left coset decomposition $\mf{S}_{q+1}=\bigsqcup_{j=0}^{q}\pi_{j}\mf{S}_{q}$.
Thus, for any $f \in \ell^{2}(\wh{K}_{q-1})$ and $t \in \wh{K}_{q-1}$, we have
\begin{equation}\label{sym2}
[D(G_{q})f](t) = -f(t) + \frac{2}{(q+1)! \sqrt{\deg(t)}}\sum_{s \in (\wh{K}_{q})_{t}} \sum_{j=0}^{q} (-1)^{q-j}
\sum_{\mu \in \mf{S}_{q}} \frac{\sgn(\mu)}{\sqrt{\deg(\nu_{q}(s^{\pi_{j}}))}}
f(\nu_{q}(s^{\pi_{j}})^{\mu})
\end{equation}
If $f \in C_{+}^{q-1}(\kcal,\mb{C})$, which means $f$ is invariant under the action of $\mf{S}_{q}$,
the last term in $\eqref{sym2}$ vanishes due to the sum over all $\mu \in \mf{S}_{q}$, and hence we have $D(G_{q})f=-f$.
Now let $f \in C^{q-1}(\kcal,\mb{C})$. Then in $\eqref{sym2}$, we see $\sgn(\mu)f(\nu_{q}(s^{\pi_{j}})^{\mu})=f(\nu_{q}(s^{\pi_{j}}))$.
Thus, the summation does not depend on $\mu \in \mf{S}_{q}$.
The term $j=q$ in the sum and the first term give $-\frac{q-1}{q+1}f(t)$.
Thus, $\eqref{sym2}$ becomes
\[
[D(G_{q})f](t) = - \frac{q-1}{q+1} f(t) + \frac{2}{(q+1) \sqrt{\deg(t)}}\sum_{s \in (\wh{K}_{q})_{t}} \sum_{j=0}^{q-1} (-1)^{q-j}
 \frac{f(\nu_{q}(s^{\pi_{j}}))}{\sqrt{\deg(\nu_{q}(s^{\pi_{j}}))}}
\]
In the above, twise the summation in $j=0,\ldots,q-1$ and $s \in (\wh{K}_{q})_{t}$ in the above
is equivalent to the summation over all edges of $E(X_{q-1})$ with origin $\ispa{t}$ and
the terminus $\ispa{\nu_{q}(s^{\pi_{j}})}$. We also have
\[
\sgn(\ispa{s},\ispa{t})=(-1)^{q},\quad \sgn(\ispa{s},\ispa{\nu_{q}(s^{\pi_{j}})})=(-1)^{j}
\]
for all $s \in (\wh{K}_{q})_{t}$. From this the statement follows.
\end{proof}

Note that we have $-I \leq D \leq I$ for $D=D(G_{q})$ or $D=D_{q-1}^{\up}$.
Therefore, we have the following.
\begin{cor}\label{NO}
Let $q \geq 2$. Then $D_{q-1}^{\up}$ does not have eigenvalue $-1$.
\end{cor}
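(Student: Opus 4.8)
The plan is to argue by contradiction, transporting a hypothetical eigenvector of $D_{q-1}^{\up}$ with eigenvalue $-1$ to an eigenvector of $D(G_{q})$ whose eigenvalue lies outside $[-1,1]$, which is impossible. So suppose $f\in\ell^{2}(K_{q-1})$ is nonzero with $D_{q-1}^{\up}f=-f$. First I would check that $f$ in fact lies in the cochain group $C^{q-1}(\kcal,\mb{C})$: writing $f=f_{0}+f_{1}$ along the orthogonal decomposition $\ell^{2}(K_{q-1})=C^{q-1}(\kcal,\mb{C})\oplus C_{+}^{q-1}(\kcal,\mb{C})$, Theorem~\ref{ker1} (with $q$ replaced by $q-1$) gives $D_{q-1}^{\up}f_{1}=0$ and $D_{q-1}^{\up}f_{0}\in C^{q-1}(\kcal,\mb{C})$, so comparing components in the identity $-f=D_{q-1}^{\up}f=D_{q-1}^{\up}f_{0}$ forces $f_{1}=0$; hence $f\in C^{q-1}(\kcal,\mb{C})$.

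Next I would invoke Theorem~\ref{sgroT}~(2). Since $f\in C^{q-1}(\kcal,\mb{C})$ and $(I-D_{q-1}^{\up})f=2f$, we obtain
\[
(I-D(G_{q}))f=\frac{2q}{q+1}(I-D_{q-1}^{\up})f=\frac{4q}{q+1}f,
\]
so that $D(G_{q})f=\frac{1-3q}{q+1}f$, i.e. $\frac{1-3q}{q+1}\in\spec_{p}(D(G_{q}))$. A one-line inequality shows $\frac{1-3q}{q+1}<-1$ precisely when $q>1$. On the other hand $D(G_{q})=\wh{\alpha}_{q-1}^{*}S_{q}\wh{\alpha}_{q-1}$ is self-adjoint with $-I\le D(G_{q})\le I$ (as noted just before the statement, using that $\wh{\alpha}_{q-1}$ is an isometry and $S_{q}$ a self-adjoint unitary). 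For $q\ge 2$ this is a contradiction, so $D_{q-1}^{\up}$ has no eigenvalue $-1$.

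There is no substantial obstacle in this argument; the one point that genuinely needs to be in place before Theorem~\ref{sgroT}~(2) is applicable is the reduction $f\in C^{q-1}(\kcal,\mb{C})$, which is exactly what Theorem~\ref{ker1} supplies, and the rest is the elementary arithmetic with $\frac{1-3q}{q+1}$.
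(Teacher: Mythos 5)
Your proposal is correct and follows essentially the same route as the paper: the paper's one-line proof is precisely the observation that $-I\le D(G_{q})\le I$ combined with the identity $(I-D(G_{q}))f=\frac{2q}{q+1}(I-D_{q-1}^{\up})f$ from Theorem~\ref{sgroT}~(2), which is exactly your contradiction via the eigenvalue $\frac{1-3q}{q+1}<-1$ for $q\ge 2$. Your preliminary reduction of the eigenfunction to $C^{q-1}(\kcal,\mb{C})$ via Theorem~\ref{ker1} is a detail the paper leaves implicit, and it is handled correctly.
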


\section{Spectrum and combinatorial properties}\label{ORI}

It seems that the up and down Grover walks, or strictly speaking their discriminants,
have much information on geometry of underlying simplicial complex.
One of evidences is Theorem $\ref{Lap2}$ on the relation between them and the combinatorial
Laplacian, because Laplacian has much geometric information.
Another evidence will be the following.
\begin{thm}\label{ori1}
Assume that our simplicial complex $\kcal$ is finite and satisfy all the assumtion in Subsection $\ref{AS}$,
and let $n=\dim \kcal$.
Then, the following holds.
\begin{enumerate}
\item The $n$-down discriminant $D_{n}^{\down}$ has eigenvalue $1$ if and only if
$\kcal$ is totally non-coherently orientable.
\item $D_{n}^{\down}$ has eigenvalue $-1$ if and only if $\kcal$ is coherently orientable.
\end{enumerate}
\end{thm}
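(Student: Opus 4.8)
The plan is to reduce everything to the eigenvalue problem for $D_{n}^{\down}$ via the concrete formula $\eqref{disc22}$ together with the observation from Theorem $\ref{ker1}$ that $D_{n}^{\down}$ lives on the cochain group $C^{n}(\kcal,\mb{C})$, where a function is determined by its values on a chosen orientation $K_{n}^{o}$. Since $n=\dim\kcal$, each top simplex has no $(n+1)$-coface, but by purity and strong connectedness the down-neighbor relation makes $Y_{n}^{r}$ connected. By the bounded-degree and purity assumptions one needs to be slightly careful: the formula for $D_{n}^{\down}$ involves the weights $1/\sqrt{2\deg_{Y}(\tau)}\sqrt{2\deg_{Y}(\tau')}$, which are not constant unless every $[\tau]\in\scal_{n}$ has the same number of down neighbors. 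So the first step is to recognize that an eigenfunction $f$ with $D_{n}^{\down}f=\pm f$ should be sought in the normalized form $g(\tau)=\sqrt{\deg_{Y}(\tau)}\,f(\tau)$ (equivalently, pass through the operator $A_{n}^{\down}$ of Proposition $\ref{ano1}$), so that via Theorem $\ref{Lap2}$, $D_{n}^{\down}f=\pm f$ is equivalent to $\lcal_{n}^{\down}g=((n+1)\pm\deg_{Y}(\cdot)\text{-type})g$. Actually the cleanest route is to use directly $D_{n}^{\down}=A_{n}^{\down}(\lcal_{n}^{\down}-(n+1)I)A_{n}^{\down}$ and the fact that $\lcal_{n}^{\down}=\delta_{n-1}\delta_{n-1}^{*}\ge 0$: its kernel and its behavior are governed by $\delta_{n-1}^{*}$, i.e. by the boundary map.

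For part (2), the eigenvalue $-1$ for $D_{n}^{\down}$ corresponds, after conjugating by $A_{n}^{\down}$, exactly to $\lcal_{n}^{\down}g=0$, i.e. $g\in\ker\lcal_{n}^{\down}=\ker\delta_{n-1}^{*}$, which (since $n$ is top) is all of $H^{n}(\kcal,\mb{C})$ — this is essentially Corollary $\ref{cohom1}$ but without the uniform-degree hypothesis once we use the $A_{n}^{\down}$-conjugation. So the task becomes: $H^{n}(\kcal,\mb{C})\ne 0$ if and only if $\kcal$ is coherently orientable. The ``if'' direction: given a coherent orientation $K_{n}^{o}$, the cochain $f\equiv 1$ on $K_{n}^{o}$ (extended by $f(\ol\tau)=-1$) satisfies $\delta_{n-1}^{*}f=0$ because at each $(n-1)$-face $\mu$, the two incident top simplices $\sigma_{1},\sigma_{2}$ contribute $\sgn(\sigma_{1},\mu)+\sgn(\sigma_{2},\mu)=0$ by the definition of coherence. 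The ``only if'' direction: given a nonzero cocycle $f$, strong connectedness lets one propagate the sign of $f$ across shared $(n-1)$-faces; because $\kcal$ has bounded degree and each $(n-1)$-simplex in the support lies in at most — and by a pigeonhole/flux argument, if $f$ is supported everywhere — exactly two top simplices (the ``pseudomanifold'' consequence one extracts from $\delta_{n-1}^{*}f=0$ on a strongly connected complex), the relation $\sgn(\sigma_{1},\mu)f(\sigma_{1})+\sgn(\sigma_{2},\mu)f(\sigma_{2})=0$ forces $|f|$ to be constant on each connected component, hence one obtains a coherent orientation by taking $K_{n}^{o}=\{\sigma:f(\sigma)>0\}$ after rescaling.

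For part (1), the eigenvalue $+1$ for $D_{n}^{\down}$ corresponds via the same conjugation to $\lcal_{n}^{\down}g=2(n+1)A_{n}^{\down}\text{-weighted}$... more precisely $D_{n}^{\down}f=f$ means $(\lcal_{n}^{\down}-(n+1)I)A_{n}^{\down}f=(A_{n}^{\down})^{-1}f$, i.e. $\lcal_{n}^{\down}g=(n+1)g+\deg_{Y}(\cdot)\,g$; but it is much cleaner to argue directly from $\eqref{disc22}$: $D_{n}^{\down}f=f$ with $f$ a nowhere-zero cochain and the support argument forcing every $(n-1)$-face to be incident to exactly two top faces, the equation at each $\tau$ reads $\sum_{\tau'}\eta^{\down}(\tau,\tau')g(\tau')/\sqrt{\deg_{Y}(\tau)\deg_{Y}(\tau')}\cdot(\text{stuff})=g(\tau)$, and matching this with the ``$+1$ row-sum'' forces $\eta^{\down}(\tau,\tau')=+1$ for all edges once $|g|$ is constant, which is precisely the definition of total non-coherent orientability after setting $K_{n}^{o}=\{\sigma:f(\sigma)>0\}$; conversely a totally non-coherent orientation makes $f\equiv 1$ on $K_{n}^{o}$ an eigenfunction with eigenvalue $+1$ by the same row-sum computation. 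The key structural input shared by both parts — and the step I expect to be the main obstacle — is establishing that a nowhere-vanishing eigenfunction for eigenvalue $\pm 1$ forces the ``local pseudomanifold'' property (each $(n-1)$-face incident to exactly two $n$-faces) and constancy of $|f|$; this requires combining the extremality $\|D_{n}^{\down}\|\le 1$ (so $\pm 1$-eigenfunctions are fixed vectors of a contraction-type averaging, forcing saturation of every triangle-inequality/Cauchy–Schwarz estimate in $\eqref{disc22}$) with strong connectedness to propagate the constancy globally, and a small argument — perhaps invoking that one may restrict attention to the subcomplex where $f\ne 0$ and that this subcomplex is again strongly connected — to handle the a priori possibility that $f$ vanishes somewhere.
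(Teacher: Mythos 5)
Your overall strategy for the ``if'' directions and for extracting an orientation from the sign of an eigenfunction matches the paper, but there are two genuine gaps in the ``only if'' directions. First, your reduction of part (2) to $H^{n}(\kcal,\mb{C})\neq 0$ is not valid at the stated level of generality: from $D_{n}^{\down}=A_{n}^{\down}(\lcal_{n}^{\down}-(n+1)I)A_{n}^{\down}$, the equation $D_{n}^{\down}f=-f$ is equivalent (setting $h=A_{n}^{\down}f$) to $\lcal_{n}^{\down}h=\bigl((n+1)-\deg_{Y}(\cdot)\bigr)h$, not to $\lcal_{n}^{\down}h=0$; the conjugation by $A_{n}^{\down}$ does not cancel the degree weights, and you recover $\ker\lcal_{n}^{\down}$ only under the uniform-degree hypothesis $\deg_{Y}\equiv n+1$ of Corollary $\ref{cohom1}$ --- exactly the hypothesis the theorem does not make. (Relatedly, the equivalence of coherent orientability with $H^{n}\neq 0$ is asserted in the paper only in Corollary $\ref{cohom2}$, again under that extra assumption.) Second, the step you yourself flag as ``the main obstacle'' --- that a $\pm1$-eigenfunction $g$ of $D_{n}^{\down}$ must saturate the relevant inequalities and hence satisfy the edgewise relation $g(\tau_{1})/\sqrt{\deg_{Y}(\tau_{1})}=\pm\,\eta^{\down}(\tau_{1},\tau_{2})\,g(\tau_{2})/\sqrt{\deg_{Y}(\tau_{2})}$ for every pair of down neighbors --- is never actually proved. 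This is the crux, and the paper isolates it as Lemma $\ref{oriL}$: writing $W=d_{Y_{n}}^{*}\ell^{2}(K_{n})$ and $V=\ker(S^{\down}-I)$ with projection $\pi_{V}=\frac{1}{2}(S^{\down}+I)$, one shows $d_{Y_{n}}^{*}g-\pi_{V}d_{Y_{n}}^{*}g\in(V+W)^{\perp}\cap(V+W)=0$, whence $S^{\down}d_{Y_{n}}^{*}g=\pm d_{Y_{n}}^{*}g$. Your Cauchy--Schwarz saturation idea can be made to work, but as written it is an intention, not an argument.

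Once that lemma is in hand, the rest is simpler than you anticipate: the edgewise relation plus connectedness of $Y_{n}$ (which follows from strong connectedness) immediately shows the eigenfunction is nowhere zero and that $|g|/\sqrt{\deg_{Y}}$ is constant, so no separate ``local pseudomanifold'' property, no constancy-of-$|f|$ input, and no restriction to the support subcomplex are needed. Indeed, total non-coherent orientability does not force each $(n-1)$-face to lie in exactly two top faces (three top simplices all inducing the same orientation on a common facet give $\eta^{\down}=+1$ pairwise), so that property cannot be a consequence of the eigenvalue equation in part (1). With these repairs, your sign-splitting $K_{n}^{o}=\{\tau \,;\, g(\tau)>0\}$ and the converse construction $g=\pm\sqrt{\deg_{Y}}$ on $K_{n}^{o}$ coincide with the paper's proof.
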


We remark that similar statements in Theorem $\ref{ori1}$ holds also for
$D_{q}^{\down}$ with $q<n$, but for this case, $\kcal$ should be replaced by its $q$-skeleton.

To prove Theorem $\ref{ori1}$, we start with the following lemma.
\begin{lem}\label{oriL}
Let $q=1,\ldots,n$. Let $g \in \ell^{2}(K_{q})$. Then $g$ is an eigenfunction of $D_{q}^{\down}$ with
eigenvalue $1$ $(${\it resp}. eigenvalue $-1$$)$ if and only if we have
\begin{equation}\label{eqE}
S^{\down}d_{Y_{q}}^{*}g=d_{Y_{q}}^{*}g \quad (\mbox{{\it resp.}} \ \
S^{\down}d_{Y_{q}}^{*}g=-d_{Y_{q}}^{*}g).
\end{equation}
\end{lem}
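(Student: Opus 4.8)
The plan is to exploit the general Szegedy-walk machinery: for any abstract Szegedy walk $U = S(2d^*d - I)$ with $d d^* = I$, an eigenfunction of the discriminant $D = dSd^*$ with eigenvalue $\pm 1$ lifts to an eigenfunction of $U$ with the same eigenvalue, and the eigenspace is spanned by the ``lifted'' vectors $d^* g$. Concretely, I would first observe that since $d_{Y_q} d_{Y_q}^* = I$, the operator $d_{Y_q}^* d_{Y_q}$ is the orthogonal projection onto $\mathrm{im}(d_{Y_q}^*)$. Applying $d_{Y_q}$ to the right-hand side of $\eqref{eqE}$ and using $d_{Y_q} d_{Y_q}^* = I$ immediately gives $D_q^{\down} g = d_{Y_q} S^{\down} d_{Y_q}^* g = \pm d_{Y_q} d_{Y_q}^* g = \pm g$, so the ``if'' direction is essentially free.

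For the ``only if'' direction, suppose $D_q^{\down} g = g$ (the case $-1$ being symmetric). The key point is to show that $h := d_{Y_q}^* g$ satisfies $S^{\down} h = h$. Since $(S^{\down})^2 = I$ and $S^{\down}$ is unitary and self-adjoint, it suffices to show $\|S^{\down} h - h\|^2 = 0$, i.e.\ $\langle S^{\down} h, h\rangle = \|h\|^2$ (using $\langle S^{\down} h, S^{\down} h\rangle = \|h\|^2$). Now
\[
\langle S^{\down} h, h\rangle = \langle S^{\down} d_{Y_q}^* g, d_{Y_q}^* g\rangle = \langle d_{Y_q} S^{\down} d_{Y_q}^* g, g\rangle = \langle D_q^{\down} g, g\rangle = \langle g, g\rangle,
\]
while $\|h\|^2 = \langle d_{Y_q}^* g, d_{Y_q}^* g\rangle = \langle d_{Y_q} d_{Y_q}^* g, g\rangle = \langle g, g\rangle$. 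Hence $\langle S^{\down} h, h\rangle = \|h\|^2$, and since also $\|S^{\down} h\| = \|h\|$, we get $\|S^{\down}h - h\|^2 = 2\|h\|^2 - 2\,\mathrm{Re}\langle S^{\down}h, h\rangle = 0$, so $S^{\down}d_{Y_q}^* g = d_{Y_q}^* g$ as claimed. The argument for eigenvalue $-1$ is identical with signs flipped: one computes $\langle S^{\down} h, h\rangle = \langle D_q^{\down} g, g\rangle = -\|g\|^2 = -\|h\|^2$ and concludes $\|S^{\down}h + h\|^2 = 0$.

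I do not anticipate a genuine obstacle here; this is the standard ``eigenvalue $\pm1$ of an abstract Szegedy walk comes from a fixed/anti-fixed vector of the shift'' computation, and everything reduces to the identities $d_{Y_q} d_{Y_q}^* = I$, unitarity of $S^{\down}$, and the definition $D_q^{\down} = d_{Y_q} S^{\down} d_{Y_q}^*$, all recorded in the excerpt. The one point worth stating carefully is why $\langle S^{\down} h, h\rangle = \|h\|^2$ together with $\|S^{\down}h\| = \|h\|$ forces $S^{\down}h = h$: this is just the equality case of Cauchy--Schwarz (equivalently, expanding $\|S^{\down}h - h\|^2$), which holds in the complex Hilbert space $\ell^2(E(Y_q))$ because the inner product $\langle S^{\down}h,h\rangle$ has been shown to equal the real number $\|h\|^2$ exactly.
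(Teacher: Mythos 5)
Your proof is correct, but it takes a genuinely different route from the paper's. The paper argues structurally: it introduces the fixed subspace $V=\{\vphi \,;\, S^{\down}\vphi=\vphi\}$ with projection $\pi_{V}=\tfrac{1}{2}(S^{\down}+I)$ and the range $W=d_{Y_{q}}^{*}\ell^{2}(K_{q})$, observes that $\phi=d_{Y_{q}}^{*}g$ and $\psi=\pi_{V}d_{Y_{q}}^{*}g$ have the same image under $d_{Y_{q}}$, and concludes $\phi-\psi\in(V+W)^{\perp}\cap(V+W)=\{0\}$ (with a dual argument for the eigenvalue $-1$). You instead expand $\|S^{\down}h\mp h\|^{2}$ for $h=d_{Y_{q}}^{*}g$ and show it vanishes using $\langle S^{\down}h,h\rangle=\langle D_{q}^{\down}g,g\rangle=\pm\|g\|^{2}=\pm\|h\|^{2}$ together with $\|S^{\down}h\|=\|h\|$; the only inputs are $d_{Y_{q}}d_{Y_{q}}^{*}=I$, unitarity of $S^{\down}$, and the definition of $D_{q}^{\down}$, and you correctly flag the equality-case-of-Cauchy--Schwarz step that closes the argument. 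Your computation is shorter and more elementary, essentially a one-line norm identity; the paper's subspace formulation is the one that extends beyond the extreme eigenvalues $\pm1$ to the full spectral mapping between $\spec(D_{q}^{\down})$ and $\spec(U_{q}^{\down})$ for abstract Szegedy walks, which is presumably why the authors phrase it that way. Both proofs are valid; note only that (as in the paper) the ``if'' direction should assume $g\neq 0$, and that your appeal to self-adjointness of $S^{\down}$ is not actually needed since you verify directly that $\langle S^{\down}h,h\rangle$ is real.
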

\begin{proof}
If $0 \neq g \in \ell^{2}(K_{q})$ satisfy $\eqref{eqE}$, applying $d_{Y_{q}}$ to $\eqref{eqE}$ shows that
$g$ is an eigenfunction with eigenvalue $\pm 1$.
We set $V=\{\vphi \in \ell^{2}(E(Y_{q})) \,;\, S^{\down}\vphi=\vphi\}$ and $\pi_{V}=\frac{1}{2}(S^{\down}+I)$.
Then $\pi_{V}$ is the projection onto the closed subspace $V$ and $S^{\down}=2\pi_{V}-I$.
We also set $W=d_{Y_{q}}^{*}\ell^{2}(K_{q})$. Then $W$ is also closed due to the relation $d_{Y_{q}}d_{Y_{q}}^{*}=I$.
If $g \in \ell^{2}(K_{q})$ is an eigenfunction of $D_{q}^{\down}$ with eigenvalue $1$, we have
$g=Dg=d_{Y_{q}}S^{\down}d_{Y_{q}}^{*}g$. From this we have $d_{Y_{q}}\pi_{V} d_{Y_{q}}^{*}g=g$.
We set $\phi=d_{Y_{q}}^{*}g$ and $\psi =\pi_{V}d_{Y_{q}}^{*}g$. Then
$d_{Y_{q}}\phi=d_{Y_{q}}d_{Y_{q}}^{*}g=g$ and $d_{Y_{q}}\psi=d_{Y_{q}}\pi_{V}d_{Y_{q}}^{*}g=g$.
Thus, $\phi-\psi \in \ker(d_{Y_{q}})=W^{\perp}$. Since $\pi_{V}^{2}=I$, we also have $\phi-\psi \in \ker(\pi_{V})=V^{\perp}$.
Thus $\phi-\psi \in W^{\perp} \cap V^{\perp}=(W+V)^{\perp}$. By definition we see $\phi \in W$ and $\psi \in V$ and hence
$\phi -\psi \in (W+V)^{\perp} \cap (W+V)=0$. This means $\pi_{V}d_{Y_{q}}^{*}g=d_{Y_{q}}^{*}g$
and hence $S^{\down}d_{Y_{q}}^{*}g=d_{Y_{q}}^{*}g$.

If $g$ is an eigenfunction of $D_{q}^{\down}$ with eigenvalue $-1$, then $d_{Y_{q}} \pi_{V} d_{Y_{q}}^{*}g=0$.
We set $\rho=\pi_{V}d_{Y_{q}}^{*}g$. Then we have $d_{Y_{q}}\rho=0$.
Hence $\rho \in V \cap W^{\perp}=(V^{\perp}+W)^{\perp}$.
But we can write $\rho=d_{Y_{q}}^{*}g -(I-\pi_{V})d_{Y_{q}}^{*}g \in W+V^{\perp}$. Therefore $\rho=0$.
From this we have $S^{\down}d_{Y_{q}}^{*}g=(2\pi_{V}-I)d_{Y_{q}}^{*}g=-d_{Y_{q}}^{*}g$.
\end{proof}

\vspace{5pt}

\noindent{\it Proof of Theorem $\ref{ori1}$.}\hspace{2pt}
First suppose that $D_{n}^{\down}$ has eigenvalue $1$. Let $0 \neq g \in \ell^{2}(K_{n})$ be an eigenfunction of $D_{n}^{\down}$
with eigenvalue $1$. Denoting $\ol{g}$ the complex conjugate of $g$, we have $D_{n}^{\down}\ol{g}=\ol{D_{n}^{\down}g}=\ol{g}$.
Hence we can assume that $g$ is real-valued. We note that, by Theorem $\ref{ker1}$, $g$ is contained in $C^{n}(\kcal,\mb{C})$.
By $\eqref{dadj}$, Lemma $\ref{oriL}$ and the definition of $S^{\down}$, we have
\begin{equation}\label{VAL}
\frac{g(\tau_{1})}{\sqrt{\deg_{Y}(\tau_{1})}}=\eta^{\down}(\tau_{1},\tau_{2}) \frac{g(\tau_{2})}{\sqrt{\deg_{Y}(\tau_{2})}}
\end{equation}
for any $n$-down neighbors $\tau_{1},\tau_{2} \in K_{n}$.
Since our simplicial complex $\kcal$ is assumed to be strongly connected, the $n$-down graph $Y_{n}$
is connected. Hence from $\eqref{VAL}$ it follows that $g$ has no zeros. Now we set
\begin{equation}\label{BIP}
K_{n}^{o}=\{\tau \in K_{n} \,;\, g(\tau)>0\}.
\end{equation}
Let $\tau \in K_{n}^{o}$. Then $g(\ol{\tau})=-g(\tau)<0$ and hence $\ol{\tau} \not\in K_{n}^{o}$.
Since $g$ has no zeros, we have a disjoint decomposition $K_{n}=K_{n}^{o} \cup \ol{K_{n}^{o}}$.
Suppose that $\tau_{1},\tau_{2} \in K_{n}^{o}$ are $n$-down neigbors.
By $\eqref{VAL}$, $\eta^{\down}(\tau_{1},\tau_{2})>0$, and hence $\eta^{\down}(\tau_{1},\tau_{2})=1$.
Therefore, $\kcal$ is totally non-coherently orientable.

When $g \neq 0$ is a real-valued eigenfunction of $D_{n}^{\down}$ with eigenvalue $-1$, the equation $\eqref{VAL}$ becomes
\begin{equation}\label{VAL2}
\frac{g(\tau_{1})}{\sqrt{\deg_{Y}(\tau_{1})}}=-\eta^{\down}(\tau_{1},\tau_{2}) \frac{g(\tau_{2})}{\sqrt{\deg_{Y}(\tau_{2})}}
\end{equation}
for any $n$-down neighbors $\tau_{1},\tau_{2} \in K_{n}$.
Then, from this it follows that $g$ does not have zeros. We define $K_{n}^{o} \subset K_{n}$ by $\eqref{BIP}$.
The same argument as above, we have a disjoint decomposition $K_{n}=K_{n}^{o} \cup \ol{K_{n}^{o}}$.
If $\tau_{1}, \tau_{2} \in K_{n}^{o}$ are $n$-down neighbors, this time $\eqref{VAL2}$ shows
$-\eta^{\down}(\tau_{1},\tau_{2})>0$ and hence $\eta^{\down}(\tau_{1},\tau_{2})=-1$, showing that $\kcal$ is coherently orientable.

Conversely, suppose that $\kcal$ is totally non-coherently orientable. Let $K_{n}=K_{n}^{o} \cup \ol{K_{n}^{o}}$ be
a totally non-coherent orientation. We define $g \in \ell^{2}(K_{n})$ by
\[
g(\tau)=
\begin{cases}
\sqrt{\deg_{Y}(\tau)} & (\tau \in K_{n}^{o}), \\
-\sqrt{\deg_{Y}(\tau)} & (\tau \in \ol{K_{n}^{o}}).
\end{cases}
\]
Then $g \in C^{n}(\kcal,\mb{C})$ and it is easy to show that $g$ satisfies $\eqref{VAL}$.
Since $\eqref{VAL}$ is equivalent to the equation $S^{\down}d_{Y_{n}}^{*}g=d_{Y_{n}}^{*}g$,
Lemma $\ref{oriL}$ shows that $g$ is an eigenfunction of $D_{n}^{\down}$ with eigenvalue $1$.
When $\kcal$ is coherently orientable, we define the function $g$ in the same way as above by using
a coherent orientation $K_{n}^{o}$. This time $g$ satisfies $\eqref{VAL2}$ and hence $D_{n}^{\down}$
has eigenvalue $-1$.
\hfill$\square$

\vspace{10pt}

Combining Theorem $\ref{ori1}$ and Corollary $\ref{cohom1}$, we have the following.

\begin{cor}\label{cohom2}
Suppose that for each $[\tau] \in \scal_{n}$ has exactly $n+1$ down neighbors.
Then the simplicial complex $\kcal$ is coherently orientable if and only if the $n$-th cohomology group $H^{n}(\kcal,\mb{C})$
does not vanish.
\end{cor}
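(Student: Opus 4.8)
The plan is simply to chain together the two preceding results. By Theorem \ref{ori1}(2), which applies since (as in its statement) $\kcal$ is finite and satisfies all the assumptions of Subsection \ref{AS}, the simplicial complex $\kcal$ is coherently orientable if and only if $D_{n}^{\down}$ has eigenvalue $-1$, that is, if and only if $\ker(D_{n}^{\down}+I) \neq 0$. This reduces the problem to deciding when that eigenspace is nontrivial.

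For the second link I would invoke Corollary \ref{cohom1}: the extra hypothesis assumed here, namely that every $[\tau] \in \scal_{n}$ has exactly $n+1$ down neighbors, is precisely the hypothesis of that corollary, which therefore provides a linear isomorphism $\ker(D_{n}^{\down}+I) \cong H^{n}(\kcal,\mb{C})$. In particular $\ker(D_{n}^{\down}+I) \neq 0$ if and only if $H^{n}(\kcal,\mb{C}) \neq 0$. Combining the two equivalences gives exactly the asserted statement: $\kcal$ is coherently orientable $\iff$ $D_{n}^{\down}$ has eigenvalue $-1$ $\iff$ $H^{n}(\kcal,\mb{C})$ does not vanish.

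There is no genuine obstacle here beyond bookkeeping: one only needs to confirm that the standing assumptions together with the "$n+1$ down neighbors" condition are simultaneously the hypotheses of Theorem \ref{ori1} and of Corollary \ref{cohom1}, which is immediate. It is worth remarking that the resulting criterion identifies a topological invariant (non-vanishing of the top cohomology) with a purely combinatorial property (coherent orientability), in the spirit of the classical orientability criterion for pseudomanifolds; the point of interest is that it is obtained here through the spectral theory of the down-Grover walk rather than by a direct chain-level argument.
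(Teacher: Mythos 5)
Your proposal is correct and is exactly the paper's argument: the corollary is stated immediately after the sentence ``Combining Theorem \ref{ori1} and Corollary \ref{cohom1}, we have the following,'' so the intended proof is precisely the two-step chain you describe. Nothing further is needed.
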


A topological space having a simplicial subdivision is called an $n$-dimensional homology manifold
if the homology groups of any of its link with integer coefficients are
isomorphic to the homology group of $(n-1)$-dimensional sphere.
For an arc-wise connected homology manifold $M$ with a simplicial subdivision $\kcal=(V,\scal)$,
it is well-known (\cite{Sp}) that, for any $(n-1)$ simplex $[\tau] \in \scal_{n-1}$,
there is exactly two $n$-dimensional simplices which contains $[\tau]$ as a face.
In this case each $[\sigma] \in \scal_{n}$ has exactly $n+1$ down neighbors.
Therefore Corollary $\ref{cohom2}$ can be applied.
It is well-known that the rank of $H_{n}(M,\mb{Z})$ equals that of $H^{n}(M,\mb{Z})$.
Hence, we have $H_{n}(M,\mb{C}) \cong H^{n}(M,\mb{C})$.
Therefore, by Corollary $\ref{cohom2}$, $M$ is orientable if and only if $H_{n}(\kcal,\mb{C}) \neq 0$,
which is a well-known fact on the orientability of homological manifolds.
In this case the eigenvalue $-1$ of $D_{n}^{\down}$ is simple.

We note that for infinite simplicial complexes, Theorem $\ref{ori1}$ can not hold
because we have the following.
\begin{prop}\label{NG}
Let $\kcal=(V,\scal)$ be an infinite simplicial complex. Then $D_{q}^{\down}$ can not have eigenvalue $\pm 1$.
\end{prop}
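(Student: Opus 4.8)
The plan is to assume $D_{q}^{\down}$ has an eigenvalue $\varepsilon\in\{+1,-1\}$ and contradict square--summability: any eigenfunction would have $|g|$ bounded below by a positive constant on an infinite connected graph, which is incompatible with $g\in\ell^{2}(K_{q})$. So suppose $0\neq g\in\ell^{2}(K_{q})$ satisfies $D_{q}^{\down}g=\varepsilon g$. As in the proof of Theorem $\ref{ori1}$ we may assume $g$ is real--valued (the entries of $D_{q}^{\down}$ are real, so $g+\ol g$ and $i(g-\ol g)$ are again eigenfunctions for $\varepsilon$, one of them nonzero), and by Theorem $\ref{ker1}$ we have $g\in C^{q}(\kcal,\mb{C})$. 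Using Lemma $\ref{oriL}$ together with the computation in the proof of Theorem $\ref{ori1}$ (formula $\eqref{dadj}$ and the definition of $S^{\down}$, which there produced $\eqref{VAL}$ for $\varepsilon=1$ and $\eqref{VAL2}$ for $\varepsilon=-1$), the eigenfunction $g$ must satisfy
\[
\frac{g(\tau_{1})}{\sqrt{\deg_{Y}(\tau_{1})}}=\varepsilon\,\eta^{\down}(\tau_{1},\tau_{2})\,\frac{g(\tau_{2})}{\sqrt{\deg_{Y}(\tau_{2})}}
\]
for every pair of $q$--down neighbors $\tau_{1},\tau_{2}\in K_{q}$. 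Since $|\varepsilon|=|\eta^{\down}(\tau_{1},\tau_{2})|=1$, taking absolute values shows that the function $\tau\mapsto |g(\tau)|/\sqrt{\deg_{Y}(\tau)}$ is constant along every edge of $Y_{q}$.

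The next step is to check that $Y_{q}$ is connected. For $q=n$ the down graph on $\scal_{n}$ is connected precisely by the strong connectedness assumption on $\kcal$; the two orientations $\tau,\ol\tau$ of an $n$--simplex are then joined by the path $\tau,\sigma,\ol\tau$ for any oriented $\sigma$ with $[\sigma]$ a down neighbor of $[\tau]$ (such a $[\sigma]$ exists since $\sharp\scal_{n}\ge 2$ and $\kcal$ is strongly connected, and $(\tau,\sigma),(\sigma,\ol\tau)\in E(Y_{q})$ because $[\sigma]\neq[\tau]$). For $1\le q<n$ the same orientation trick applies once one knows the down graph on $\scal_{q}$ is connected, and this follows from purity and strong connectedness: the $q$--faces of a fixed $n$--simplex form a connected subgraph (it is a Johnson graph, connected for $q<n$), and $n$--simplices sharing an $(n-1)$--face have overlapping sets of $q$--faces, so strong connectedness of $\kcal$ glues everything together. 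In particular $Y_{q}$ is connected with more than one vertex, so $\deg_{Y}(\tau)\ge 1$ for every $\tau\in K_{q}$. Picking $\tau_{0}$ with $g(\tau_{0})\neq 0$ and setting $c:=|g(\tau_{0})|/\sqrt{\deg_{Y}(\tau_{0})}>0$, the constancy from the first paragraph gives $|g(\tau)|^{2}=c^{2}\deg_{Y}(\tau)\ge c^{2}$ for all $\tau\in K_{q}$.

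Finally I would observe that $K_{q}$ is infinite. Since $\kcal$ is finite--dimensional and infinite, some $\scal_{j}$ is infinite; purity forces $\scal_{n}$ to be infinite (if $\scal_{n}$ were finite then each $\scal_{j}$, $j\le n$, would be finite, as every $j$--simplex is a face of one of the finitely many $n$--simplices, each having only $\binom{n+1}{j+1}$ faces of dimension $j$), and the bounded--degree hypothesis then forces $\sharp\scal_{q}=\infty$ (counting the pairs $(G,F)$ with $G\in\scal_{n}$, $F\in\scal_{q}$, $F\subset G$ gives $\binom{n+1}{q+1}\sharp\scal_{n}\le K\,\sharp\scal_{q}$). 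Hence $\sharp K_{q}\ge\sharp\scal_{q}=\infty$, and therefore
\[
\|g\|_{K_{q}}^{2}=\sum_{\tau\in K_{q}}|g(\tau)|^{2}\ge c^{2}\,\sharp K_{q}=+\infty,
\]
contradicting $g\in\ell^{2}(K_{q})$. The mechanism of the proof is entirely the unit--modulus identity for $\eta^{\down}$, which is what forces $|g|$ to be bounded below and thus is inconsistent with $\ell^{2}$ on an infinite graph — exactly the feature absent in the finite setting of Theorem $\ref{ori1}$. The only genuinely delicate point is the combinatorial verification in the second paragraph that $Y_{q}$ is connected (for $q<n$), and one could alternatively avoid proving full connectedness by simply showing that the connected component of $Y_{q}$ containing $\tau_{0}$ is infinite, which is all that is actually used.
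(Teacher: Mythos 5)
Your proof is correct and takes essentially the same route as the paper's: Lemma~\ref{oriL} gives the edge relation along $E(Y_{q})$, from which $|g(\tau)|/\sqrt{\deg_{Y}(\tau)}$ is constant on the connected down graph, and this is incompatible with $g\in\ell^{2}(K_{q})$ when $K_{q}$ is infinite. You merely supply details the paper leaves implicit (the reduction to real $g$, the connectivity of $Y_{q}$ for $q<n$, and the infiniteness of $K_{q}$), all of which check out.
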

\begin{proof}
Suppose contrary that $D_{q}^{\down}$ has eigenvalue $1$. Let $g \in \ell^{2}(K_{q})$ be an eigenfunction
of $D_{q}^{\down}$ with eigenvalue $1$.
We note that Lemma $\ref{oriL}$ still holds for infinite simplicial complexes.
Thus, $\ref{eqE}$ and hence $\eqref{VAL}$ hold.
We define $h \in \ell^{2}(K_{q})$ by
\[
h(\tau)=\frac{g(\tau)}{\sqrt{2\deg_{Y}(\tau)}} \quad (\tau \in K_{q}).
\]
Then we have
\begin{equation}\label{con1}
h(\tau)= \pm h(\tau')
\end{equation}
when $\tau$ and $\tau'$ are adjacent.
We fix a vertex $\tau_{0} \in K_{q}$ such that $g(\tau_{0}) \neq 0$.
We set
\[
c_{0}=h(\tau_{0})=\frac{g(\tau_{0})}{\sqrt{2\deg_{Y}(\tau_{0})}}.
\]
Since the down graph $Y_{q}$ is connected by our assumtion in Subsection $\ref{AS}$,
$\eqref{con1}$ shows that $h(\tau)=\pm c_{0}$ for any $\tau \in K_{q}$.
But then $h$ can not be an $\ell^{2}$-function on $K_{q}$,
a contradiction. The assertion for $-1$ follows from the same discussion with $\eqref{VAL2}$.
\end{proof}

\subsection{An example, infinite cylinder}\label{CYL}

In the next section, some of examples of finite simplicial complexes will be given.
Therefore, in the rest of this section, we shall compute the spectrum of
discriminants for one example of infinite simplicial complexes, an infinite cylinder.
The triangulation we use is depicted in Figure $\ref{cylT}$.
\begin{figure}[htb]
     \scalebox{0.40}{\includegraphics{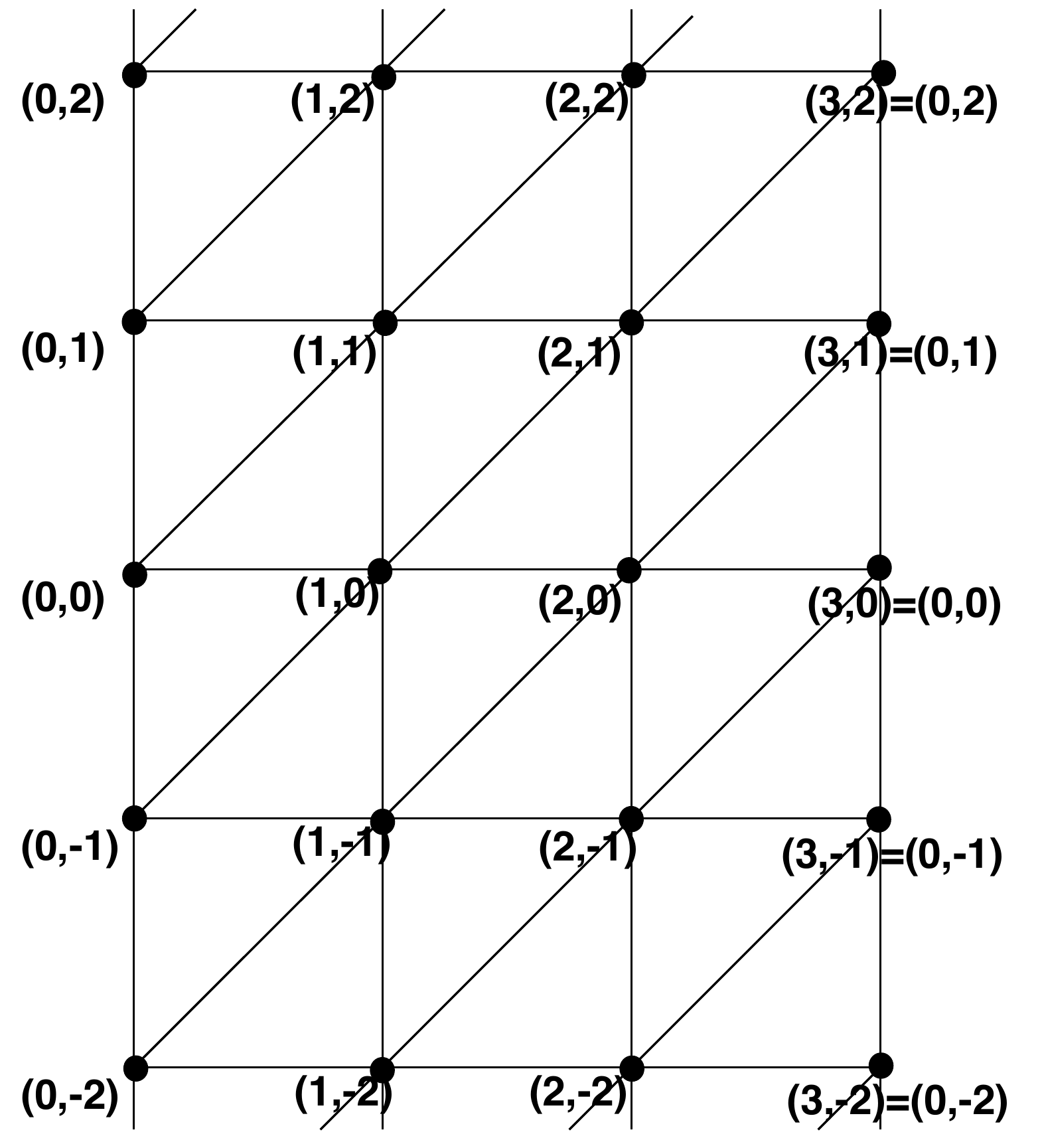}}
     \caption{Infinite cylinder}
     \label{cylT}
 \end{figure}
Since the spectrum of up-discriminants can be red of (except zero) by Corollary $\ref{anoC}$,
we only consider the down-discriminants and down-Grover walks.
\begin{prop}\label{cylP}
For spectra of the down-discriminants and down-Grover walks, the following hold.
\begin{enumerate}
\item We have $\spec(D_{2}^{\down})=[-1,1]$ all of which is continuous spectrum except for zero.
Zero is an eigenvalue of $D_{2}^{\down}$.
Hence $\spec(U_{2}^{\down})=S^{1}$ all of which is continuous spectrum except for $\pm i$ and $\pm 1$.
$\pm i$ is eigenvalues of $U_{2}^{\down}$.
\item We have $\spec(D_{1}^{\down})=\{-1/5,0\} \cup (-1/5,7/10]$, and $(-1/5,7/10]$ is continuous
spectrum and $\{-1/5,0\}$ are eigenvalues. There are no other eigenvalue of $D_{1}^{\down}$.
Hence $\spec(U_{1}^{\down})$ consists of $\{\lambda \in S^{1} \,;\, \re(\lambda) \in \{-1/5,0\} \cup (-1/5,7/10]\}$ and
possibly $\{\pm 1\}$.
A point $\lambda \in S^{1}$ is a continuous spectrum when $\re(\lambda) \in (-1/5,7/10]$ and
is an eigenvalue when $\re(\lambda)=-1/5$ or $\re(\lambda)=0$.
\end{enumerate}
\end{prop}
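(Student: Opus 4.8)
\emph{Proof proposal.}
The plan is to exploit the $\mathbb Z$-symmetry of the triangulation in Figure \ref{cylT}: translation of the cylinder along its axis acts freely on $\kcal$ and preserves all the combinatorial data, hence commutes with $D_q^{\down}$ and $U_q^{\down}$ (and with the combinatorial Laplacians $\lcal_q^{\up},\lcal_q^{\down}$). Choosing a fundamental domain for this action and applying the Floquet--Bloch transform identifies $\ell^2(K_q)$ together with its invariant subspaces $C^q(\kcal,\mb C)$ and $C^q_+(\kcal,\mb C)$ with a direct integral $\int_{[0,2\pi)}^{\oplus}\mathcal H_\theta\,\frac{d\theta}{2\pi}$, each $\mathcal H_\theta$ of dimension equal to the number of $q$-simplices in one period, and conjugates $D_q^{\down}$ into a direct integral $\int^{\oplus}D_q^{\down}(\theta)\,\frac{d\theta}{2\pi}$ of matrices depending trigonometric-polynomially on $\theta$ (one may further diagonalize the finite rotational symmetry of the circle to shrink the matrices). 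Consequently $\spec(D_q^{\down})=\overline{\bigcup_\theta\spec(D_q^{\down}(\theta))}$ is the union of the closed ranges of the continuous eigenvalue bands $\lambda_1(\theta)\le\cdots\le\lambda_m(\theta)$; by the standard analysis of periodic operators a band that is not identically constant contributes only (absolutely) continuous spectrum and no eigenvalue, while a flat band $\lambda_j\equiv c$ contributes the eigenvalue $c$ of infinite multiplicity, and the flat bands account for all the point spectrum coming from $C^q(\kcal,\mb C)$. In addition, by Theorem \ref{ker1} the operator $D_q^{\down}$ annihilates the infinite-dimensional subspace $C^q_+(\kcal,\mb C)$, so $0$ is always an eigenvalue of $D_q^{\down}$, independently of the bands.

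For $q=2$ the reduction simplifies: every $2$-simplex of a triangulated surface has exactly three down neighbours, so $\deg_Y\equiv 3$, the operator $A_2^{\down}$ of Proposition \ref{ano1} equals $\tfrac1{\sqrt3}I$, and Theorem \ref{Lap2} gives $D_2^{\down}=\tfrac13\lcal_2^{\down}-I$ on $C^2(\kcal,\mb C)$. I would then write out the Bloch matrix of $\delta_1$ on the chosen fundamental domain — this is the step where one fixes orientations of the edges and triangles in one period and records the signs $\sgn(\sigma,\tau)$ — and compute the eigenvalue bands of $\delta_1(\theta)\delta_1(\theta)^{*}$. The assertion $\spec(D_2^{\down})=[-1,1]$ is precisely the statement that these bands sweep the full interval $[0,6]$ for $\lcal_2^{\down}$ with none of them flat, so that inside $C^2$ the spectrum is purely continuous; adjoining the $C^2_+$ contribution gives the eigenvalue $0$, and Proposition \ref{NG} rules out $\pm1$ as eigenvalues.

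For $q=1$ one argues the same way with the matrix of $\delta_0$ on the fundamental domain. Here $D_1^{\down}=A_1^{\down}(\lcal_1^{\down}-2I)A_1^{\down}$ with $\lcal_1^{\down}=\delta_0\delta_0^{*}$, and since every edge of the triangulation of Figure \ref{cylT} has the same number of down neighbours, $A_1^{\down}$ is the scalar $\tfrac1{\sqrt{10}}I$, so $D_1^{\down}=\tfrac1{10}\lcal_1^{\down}-\tfrac15$. Now $\ker\lcal_1^{\down}=\ker\delta_0^{*}$, the space of ``divergence-free'' $\ell^2$ edge functions, is an infinite-dimensional $\mathbb Z$-invariant subspace of $C^1$, i.e.\ a flat band of $\lcal_1^{\down}$ at $0$, which produces the eigenvalue $-\tfrac15$ of $D_1^{\down}$; on its orthogonal complement $\overline{\operatorname{im}\delta_0}$ the operator $\lcal_1^{\down}$ has the same nonzero spectrum as the graph Laplacian $\lcal_0^{\up}=\delta_0^{*}\delta_0$ of the $1$-skeleton, whose spectrum I would compute by Floquet--Bloch and which should come out to $[0,9]$ with no flat band. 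This yields the continuous part $(-\tfrac15,\tfrac7{10}]$ and shows there is no eigenvalue of $D_1^{\down}$ in $C^1$ other than $-\tfrac15$; adjoining the $C^1_+$-eigenvalue $0$ gives $\spec(D_1^{\down})=\{-\tfrac15,0\}\cup(-\tfrac15,\tfrac7{10}]$ with the stated classification into continuous spectrum and eigenvalues.

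Finally I transfer the results to the down-Grover walks by the spectral mapping for abstract Szegedy walks (\cite{SS}): $t\mapsto t\pm i\sqrt{1-t^2}$ carries $\spec(D_q^{\down})$ onto the corresponding subset of $S^1$, and sends eigenvalues to eigenvalues and continuous spectrum to continuous spectrum; in particular $0\in\spec_p(D_q^{\down})$ forces the always-present eigenvalues $\pm i$ of $U_q^{\down}$. The delicate point is the status of $\pm1$, which lie in the image of the spectral map only when $\pm1\in\spec(D_q^{\down})$: an eigenvalue $\pm1$ of $U_q^{\down}$ can otherwise only arise from the part of $\ell^2(E(Y_q))$ not governed by the discriminant, namely from $\ker(d_{Y_q})$ intersected with the $\pm1$-eigenspaces of $S^{\down}$; for $q=2$ one checks this intersection contains no $\ell^2$ vector, so $\pm1$ is continuous spectrum of $U_2^{\down}$ but not an eigenvalue, whereas for $q=1$ the corresponding analysis is inconclusive, which is why only ``possibly $\{\pm1\}$'' is claimed. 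The main obstacle throughout is the explicit Floquet--Bloch bookkeeping — setting up the fundamental domain with a consistent choice of orientations, assembling the signed Bloch matrices correctly, and, for $q=1$, factoring the characteristic polynomial and verifying that the non-flat band ranges fit together into exactly the stated interval — together with the clean separation of flat bands (point spectrum) from moving bands (continuous spectrum).
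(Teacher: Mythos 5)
Your proposal follows essentially the same route as the paper: exploit the $\mb{Z}_{3}\times\mb{Z}$ crystal-lattice symmetry of the triangulation, pass to Bloch matrices $\wh{D}(z)$, and read off the spectrum as the union of the band ranges, with non-constant bands giving continuous spectrum, flat bands giving eigenvalues, and the subspace $C^{q}_{+}(\kcal,\mb{C})$ supplying the eigenvalue $0$ by Theorem \ref{ker1}. The one genuine (and pleasant) reorganization is that, since $\deg_{Y}$ is constant in each dimension, you route the computation through Theorem \ref{Lap2}, reducing $D_{2}^{\down}$ to $\tfrac13\lcal_{2}^{\down}-I$ and $D_{1}^{\down}$ to $\tfrac1{10}\lcal_{1}^{\down}-\tfrac15$, and then for $q=1$ split off the flat band as $\ker\delta_{0}^{*}$ and transfer the nonzero spectrum to the graph Laplacian of the $1$-skeleton; this is consistent with Corollary \ref{anoC}, identifies conceptually why the flat band at $-1/5$ appears (the fibers of $\delta_{0}^{*}(\theta)$ map $\mb{C}^{9}\to\mb{C}^{3}$, forcing a $6$-fold flat zero band of $\lcal_{1}^{\down}$), and shrinks the remaining computation to a $3\times3$ Bloch matrix, whereas the paper diagonalizes the $9\times9$ Bloch matrix of $D_{1}^{\down}$ directly using the residual $\mb{Z}_{3}$-symmetry. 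The predicted band ranges ($[0,6]$ for $\lcal_{2}^{\down}$, $[0,9]$ for the nonzero part of $\lcal_{1}^{\down}$) do match what the paper's explicit eigenvalue formulas give, though in your write-up these remain announced rather than computed, and the computation is the actual content of the proof. One overreach: you assert that for $q=2$ one ``checks'' that $\pm1$ is not an eigenvalue of $U_{2}^{\down}$ by inspecting $\ker(d_{Y_{2}})$ intersected with the $\pm1$-eigenspaces of $S^{\down}$; the paper explicitly remarks that its method cannot decide whether $\pm1$ are eigenvalues of $U_{q}^{\down}$ for either $q$, so this step needs an actual argument or should be weakened to ``possibly $\pm1$'' as in the statement.
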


\begin{rem}
We remark that in Proposition $\ref{cylP}$, $\pm 1$ might be eigenvalues of $U_{1}^{\down}$, $U_{2}^{\down}$.
It can not be specified whether they are eigenvalues of down-Grover walks or not by the proof below.
See \cite{HKSS}. For $U_{1}^{\down}$, $\pm 1$ can not be continuous spectrum because it is isolated.
As is made clear in the following, the eigenvalue $0$ of the down-discriminants comes from the part $C^{q}_{+}(\kcal,\mb{C})$
for both of the case $q=1,2$.
\end{rem}

\begin{proof}
To prove Proposition $\ref{cylP}$,
we use the coordinates $(l,n)$ with $l \in \mb{Z}_{3}=\mb{Z}/3\mb{Z}$ and $n \in \mb{Z}$.
We number the $2$-simplices as
\[
\rho(l,n)=\ispa{(l,n)(l,n+1)(l+1,n+1)},\quad
\sigma(l,n)=\ispa{(l,n)(l+1,n+1)(l+1,n)}.
\]
We also number the $1$-simplices as
\[
a(l,n)=\ispa{(l,n)(l+1,n+1)},\quad
b(l,n)=\ispa{(l,n)(l,n-1)},\quad c(l,n)=\ispa{(l,n)(l-1,n)}.
\]
We note that $K_{2}^{o}=\{\rho(l,n),\sigma(l,n)\}$ is a coherent orientation of $K_{2}^{o}$.
However the orientation $K_{1}^{o}=\{a(l,n),\,b(l,n),\,c(l,n)\}$ of $K_{1}$ is not coherent.

Since  the space $C_{+}^{q}(\kcal,\mb{C})$ is contained in the kernel of $D_{q}^{\down}$,
it would be enough to consider the restriction of $D_{q}^{\down}$ to the space $C^{q}(\kcal,\mb{C})$.
Therefore, it is enough to consider the reduced down-graph $Y_{q}$ by the above orientation.
With the above orientation, $D_{2}^{\down}$ is written as
\[
\begin{split}
(D_{2}^{\down}f)(\rho(l,n)) & = -\frac{1}{3}\left[
f(\sigma(l,n))+f(\sigma(l,n+1)) +f(\sigma(l-1,n))
\right], \\
(D_{2}^{\down}f)(\sigma(l,n)) & = -\frac{1}{3}\left[
f(\rho(l,n))+f(\rho(l,n-1)) +f(\rho(l+1,n))
\right].
\end{split}
\]
A formula for $D_{1}^{\down}$ becomes very long because the reduced $1$-down graph $Y_{1}^{\down}$,
which is regular, has degree $10$. One of it is given by
\[
\begin{split}
(D_{1}^{\down}g)(a(l,n))  = & \frac{1}{10}\left[
-g(a(l-1,n-1)) -g(a(l+1,n+1))
\right.\\
& -g(b(l+1,n+1))+g(b(l+1,n+2)) -g(b(l,n+1))+g(b(l,n))  \\
& \left.
-g(c(l+1,n)) +g(c(l+2,n+1) -g(c(l+1,n+1)) +g(c(l,n))
\right].
\end{split}
\]
We note that the reduced down graphs $Y_{q}^{r}$ ($q=1,2$) is equipped with a free $\mb{Z}_{3} \times \mb{Z}$-action
which makes $Y_{q}^{r}$ a {\it crystal lattice} (\cite{KS}).
Furthermore, the discriminants are commutative with the action of $\mb{Z}_{3} \times \mb{Z}$.
First we consider the $2$-down discriminant.
Let $\wh{D}$ be the bounded operator on $L^{2}(S^{1},\ell^{2}(V_{2}))$, where
$V_{2}$ is a set
of cardinality $6$. More concretely, we write
\[
V_{2}=\{\rho_{l},\sigma_{l} \,;\, l \in \mb{Z}_{3}\},
\]
obtained by conjugating $D_{2}^{\down}$ by the Fourier transform.
The operator $\wh{D}$ is a multiplication by certain matrix-valued function $\wh{D}(z)$ in $z \in S^{1}$.
Then the spectrum of $D_{2}^{\down}$ is the union of eigenvalues of $\wh{D}(z)$ for all $z \in S^{1}$,
and the eigenvalues of $D_{2}^{\down}$ are that of $\wh{D}(z)$ which does not depend on $z \in S^{1}$.
(Similar property holds for $D_{1}^{\down}$.) Therefore, it would be enough to compute the eigenvalue of $\wh{D}(z)$.

We identify $C^{2}(\kcal,\mb{C})$ with $\ell^{2}(\mb{Z}, \ell^{2}(V_{2}))$ in a natural way.
Then we have
\[
\begin{split}
(D_{2}^{\down}f)(n)(\rho_{l}) & =-\frac{1}{3}\left[
f(n)(\sigma_{l}) +f(n+1)(\sigma_{l})+f(n)(\sigma_{l-1})
\right],\\
(D_{2}^{\down}f)(n)(\sigma_{l}) & =
-\frac{1}{3}\left[
f(n)(\rho_{l}) +f(n-1)(\rho_{l})+f(n)(\rho_{l+1})
\right],
\end{split}
\]
The matrix-valued function $\wh{D}(z)$ in $z \in S^{1}$ is then given by
\[
\wh{D}(z)=-\frac{1}{3}
\begin{pmatrix}
0 & \Omega+(1+z^{-1})I \\
\Omega^{2} +(1+z)I & 0
\end{pmatrix},
\]
where $\Omega$ is a $3 \times 3$ permutation matrix given by
\[
\Omega=
\begin{pmatrix}
0 & 0 & 1 \\
1 & 0 & 0 \\
0 & 1 & 0
\end{pmatrix}.
\]
It would be easy to compute the eigenvalues of $\wh{D}(z)$ for each $z \in S^{1}$.
The eigenvalues of $\wh{D}(z)$ ($z=e^{i\theta} \in S^{1}$) are
\[
\pm \frac{1}{3}\sqrt{5+4\cos \theta},\quad
\pm \frac{\sqrt{2}}{3}\sqrt{1+\cos (\theta +\pi/3)}.
\]
From this the first statement of Proposition $\ref{cylP}$ follows.

Next let us consider the $1$-down discriminant $D_{1}^{\down}$. Let $V_{1}=\{a_{l},b_{l},c_{l} \,;\,l \in \mb{Z}_{3}\}$.
Then $V_{2}$ is regarded as the set of vertices of the quotient graph $Y_{1}^{r}/\mb{Z}$ where $Y_{1}^{r}$ is the
reduced $1$-down graph.
This time, the matrix-valued function $\wh{D}(z)$ becomes $9 \times 9$-matrix.
But fortunately the graph $Y_{1}^{r}/\mb{Z}$ admits an action of $\mb{Z}_{3}$ and $\wh{D}(z)$ is still commutative with this action.
Then decomposing $\ell^{2}(V_{1})$ by the $\mb{Z}_{3}$-action and restricts $\wh{D}(z)$ on each isotypical subspaces (eigenspaces)
for this $\mb{Z}_{3}$-action, we get the $3 \times 3$-matrix
\[
\frac{1}{10}
\begin{bmatrix}
-(\mu z +\mu^{-1}z^{-1}) & (1-z^{-1})(1-\mu^{-1}z^{-1}) & (1-\mu^{-1})(1-\mu^{-1}z^{-1}) \\
(1-z)(1-\mu z) & -(z+z^{-1}) & (1-\mu^{-1}) (1-z) \\
(1-\mu) (1-\mu z) & (1-\mu) (1-z^{-1}) & -(\mu+\mu^{-1})
\end{bmatrix},
\]
where $\mu=1,\omega,\omega^{2}$ with $\omega=e^{2\pi i/3}$. The eigenvalues of each of these matrices is
\[
-\frac{1}{5} \ (\mbox{multiplicity $2$}),\quad \frac{2(1-\cos(\theta+\phi/2)\cos(\phi/2))-\cos \phi}{5}\ (\mbox{multiplicity $1$}),
\]
where $z=e^{i\theta}$ and $\phi=0,2\pi/3$ or $\phi=4\pi/3$. From this the second assertion follows.
\end{proof}
\begin{rem}
\begin{enumerate}
\item It does not seem so straightforward to compute the spectrum of $D_{2}^{\down}$ for infinite M\"{o}bius band.
This is because, at least for the triangulation similar to that we used for cylinder, there are no $\mb{Z}$-action
which is commutative with $D_{2}^{\down}$. $D_{2}^{\down}$ for M\"{o}bius band can be regarded as a perturbation
of that for cylinder. But the perturbation term is not commutative with $D_{2}^{\down}$ for cylinder.
\item We remark that the $1$-skeleton for the triangulation of cylinder we used is not coherently orientable.
Theorem $\ref{ori1}$ is only for finite simplicial complex. However, according to
the above computation for cylinder it seems that still there might be some relationship between
spectrum and orientation.
\end{enumerate}
\end{rem}

\section{Spectral symmetry for certain simplicial complexes}\label{SSS}

In this section, we continue to study properties of eigenvalues of discriminant operators.
We adopt the methods in \cite{FH} to prove symmetry properties of eigenvalues of discriminant operators for some finite simplicial complexes.

Let $\theta:K_q\rightarrow \{1,-1\}$ be a switching function such that $\theta(\tau)=\theta(\ol{\tau})$, for every $\tau, \ol{\tau}\in K_q$.
We define a function $\eta^{\theta}$ by
$\eta^{\theta}(\tau,\tau')=\theta(\tau)\eta(\tau,\tau')\theta(\tau')$.
Let $A^{\up}(\tau,\tau'):=\sqrt{deg_{X}(\tau)}\delta_{\tau\tau'}$ define the degree matrix for up graph, and
$A^{\down}(\tau,\tau'):=\sqrt{deg_{Y}(\tau)}\delta_{\tau\tau'}$ define the degree matrix for down graph,
where $\delta_{\tau\tau'}=1$ if $\tau=\tau'$, otherwise, $\delta_{\tau\tau'}=0$.
For a switching function $\theta,$ let $S^{\theta}$ denote the diagonal matrix defined as $S^{\theta}(\tau,\tau'):=\theta(\tau)\delta_{\tau\tau'}.$
Note that $(S^{\theta})^{-1}=S^{\theta}$ and $S^{\theta}A^{\up}=A^{\up}S^{\theta}$, $S^{\theta}A^{\down}=A^{\down}S^{\theta}$.
We then define the operator $(D_{q}^{\up})^{\theta}$ and $(D_{q}^{\down})^{\theta}$ by
$$
2(q+1)(D_q^{\up})^{\theta}=S^{\theta}(A^{\up})^{-1}\eta^{\up} (A^{\up})^{-1}S^{\theta}
$$
$$
2(D_q^{\down})^{\theta}=S^{\theta}(A^{\down})^{-1}\eta^{\down} (A^{\down})^{-1}S^{\theta}.
$$
We also note that we have
$$
2(q+1)D_q^{\up}=(A^{\up})^{-1}\eta^{\up} (A^{\up})^{-1}
$$
$$
2D_q^{\down}=(A^{\down})^{-1}\eta^{\down} (A^{\down})^{-1}.
$$
These definitions imply the following lemma.
\begin{lem}\label{symlem}
Let $\kcal$ be a finite simplicial complex and $\theta:K_q\rightarrow \{1,-1\}$ a switching function. Then the switched
$D^{\theta}$ has the same eigenvalue as $D,$ i.e.
$$
\sigma(D^{\theta})=\sigma(D).
$$
where $D$ represent $D_{q}^{\up}$ or $D_{q}^{\down}$.
\end{lem}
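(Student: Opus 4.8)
The plan is to exhibit an explicit similarity (in fact a unitary conjugation) between $D$ and $D^{\theta}$ implemented by the diagonal operator $S^{\theta}$, and then conclude equality of spectra from the fact that conjugation by an invertible operator preserves the spectrum. Since $\kcal$ is finite, $S^{\theta}$ is an honest $\pm 1$ diagonal matrix, $(S^{\theta})^{-1}=S^{\theta}$, and everything is finite-dimensional, so there are no convergence or domain subtleties.

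First I would record the commutation relations already noted in the text: $S^{\theta}A^{\up}=A^{\up}S^{\theta}$ and $S^{\theta}A^{\down}=A^{\down}S^{\theta}$, which hold because both operators are diagonal. Consequently $S^{\theta}(A^{\up})^{-1}=(A^{\up})^{-1}S^{\theta}$ and likewise for the down degree matrix. Next I would treat $\eta^{\up}$ (resp.\ $\eta^{\down}$) as the matrix with entries $\eta^{\up}(\tau,\tau')$ and observe that, by the very definition $\eta^{\theta}(\tau,\tau')=\theta(\tau)\eta(\tau,\tau')\theta(\tau')$, one has the matrix identity $\eta^{\theta}=S^{\theta}\,\eta\,S^{\theta}$. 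Then, writing $D$ for $D_{q}^{\up}$ (the $D_{q}^{\down}$ case is identical with $2(q+1)$ replaced by $2$ and $A^{\up}$ by $A^{\down}$),
\[
2(q+1)(D_{q}^{\up})^{\theta}=S^{\theta}(A^{\up})^{-1}\eta^{\up}(A^{\up})^{-1}S^{\theta}
= S^{\theta}\bigl(2(q+1)D_{q}^{\up}\bigr)S^{\theta},
\]
so that $(D_{q}^{\up})^{\theta}=S^{\theta}D_{q}^{\up}S^{\theta}=S^{\theta}D_{q}^{\up}(S^{\theta})^{-1}$. Hence $D^{\theta}$ and $D$ are conjugate by the invertible (indeed unitary, self-inverse) matrix $S^{\theta}$, which gives $\sigma(D^{\theta})=\sigma(D)$, including multiplicities.

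The only point that requires a moment's care is that one must work inside the right function space: by Theorem~\ref{ker1} the operators $D_{q}^{\up},D_{q}^{\down}$ live essentially on $C^{q}(\kcal,\mb{C})$, and one should check that $S^{\theta}$ preserves this subspace. This is immediate from the hypothesis $\theta(\tau)=\theta(\ol{\tau})$: if $f(\ol{\tau})=-f(\tau)$ then $(S^{\theta}f)(\ol{\tau})=\theta(\ol{\tau})f(\ol{\tau})=-\theta(\tau)f(\tau)=-(S^{\theta}f)(\tau)$, so $S^{\theta}$ maps $C^{q}(\kcal,\mb{C})$ to itself (and similarly $C^{q}_{+}$ to itself, consistent with the zero eigenspace being untouched). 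So $S^{\theta}$ restricts to a unitary involution on $C^{q}(\kcal,\mb{C})$ conjugating $D$ to $D^{\theta}$. The main ``obstacle'' is really just bookkeeping: matching the two displayed formulas for $2(q+1)D_{q}^{\up}$ and $2(q+1)(D_{q}^{\up})^{\theta}$ and pushing the two copies of $S^{\theta}$ past the diagonal factors $(A^{\up})^{-1}$ so that they flank $\eta^{\up}$ and reassemble as $\eta^{\theta}$; there is no genuine difficulty beyond that.
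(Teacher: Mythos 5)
Your proof is correct and is essentially the argument the paper intends: the paper simply remarks that ``these definitions imply the following lemma,'' the point being exactly the conjugation identity $D^{\theta}=S^{\theta}DS^{\theta}$ with $S^{\theta}$ an invertible (self-inverse) diagonal matrix, which preserves the spectrum. Your additional check that $S^{\theta}$ preserves $C^{q}(\kcal,\mb{C})$ is a sensible bit of extra care but does not change the route.
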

For two $n \times n$ matrices $A$ and $B$, we write $A \simeq B$ if $B$ is obtained by a sequence
of changes each of which replaces the $i$-th row and the $j$-th row and also
replaces the $i$-th column and the $j$-th column.
If $A\simeq B$, then the eigenvalues of $A$ and $B$ with their multiplicities are the same.
Moreover, we have the following proposition
\begin{prop}\label{sym}
Let $\kcal=(V,\scal)$ be a finite simplicial complex. If there is a switching function $\theta:K_q\rightarrow \{1,-1\}$
such that $D^{\theta}\simeq -D,$ then the eigenvalue of $D$ is symmetric, that is $\sigma(D)=-\sigma(D).$
\end{prop}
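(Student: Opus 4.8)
The plan is simply to chain the two spectral invariances that are already in place: the switching invariance of Lemma~\ref{symlem} and the invariance of the spectrum under the relation $\simeq$ noted just before the statement. Since $\kcal$ is finite, all operators in sight are honest matrices, and both operations ($\theta$-switching and $\simeq$) are conjugations, so eigenvalues are preserved together with multiplicities. The symmetry $\sigma(D)=-\sigma(D)$ then drops out by composing the equalities.

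\textbf{Steps.} First I would apply Lemma~\ref{symlem} to the given switching function $\theta$ to obtain $\sigma(D^{\theta})=\sigma(D)$. In fact, reading off the displayed formulas for $(D_q^{\up})^{\theta}$ and $(D_q^{\down})^{\theta}$ together with the commutation relations $S^{\theta}A^{\up}=A^{\up}S^{\theta}$ and $S^{\theta}A^{\down}=A^{\down}S^{\theta}$ and the involutivity $(S^{\theta})^{-1}=S^{\theta}$, one sees that $D^{\theta}=S^{\theta}DS^{\theta}$ is actually \emph{similar} to $D$, so this equality of spectra holds with multiplicities. Second, by hypothesis $D^{\theta}\simeq -D$; since each elementary change in the definition of $\simeq$ is a simultaneous transposition of two rows and the corresponding two columns, i.e. conjugation by a transposition permutation matrix, a sequence of them is conjugation by a permutation matrix, and hence $D^{\theta}$ and $-D$ are similar; thus $\sigma(D^{\theta})=\sigma(-D)$, again with multiplicities. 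Third, trivially $\sigma(-D)=-\sigma(D)$. Chaining the three identities gives
\[
\sigma(D)=\sigma(D^{\theta})=\sigma(-D)=-\sigma(D),
\]
which is exactly the assertion that the spectrum of $D$ (with multiplicities) is symmetric about the origin, for $D=D_q^{\up}$ or $D=D_q^{\down}$.

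\textbf{Main obstacle.} There is essentially no serious obstacle here; the proof is a two-line composition once Lemma~\ref{symlem} is available. The only point requiring a word of care is bookkeeping of multiplicities, and this is handled by observing that both $\theta$-switching and the relation $\simeq$ are realized by conjugation (by $S^{\theta}$ and by a permutation matrix, respectively), so that $D$, $D^{\theta}$, and $-D$ are all mutually similar and therefore have identical characteristic polynomials.
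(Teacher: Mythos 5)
Your proof is correct and follows exactly the paper's argument: invoke Lemma~\ref{symlem} for $\sigma(D^{\theta})=\sigma(D)$, use the hypothesis $D^{\theta}\simeq -D$ (conjugation by a permutation matrix) to get $\sigma(D^{\theta})=-\sigma(D)$, and chain. The extra remarks on similarity and multiplicities are a welcome clarification but do not change the route.
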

\begin{proof}
Combing Lemma \ref{symlem},
$$\sigma(D)=\sigma(D^{\theta})=-\sigma(D).$$
This proves the assertion.
\end{proof}
Similar to \cite{FH}, we have the following
\begin{thm}\label{symthm}
There exists a function $\theta:K_{q} \to \{\pm 1\}$ such that $\theta(\ol{\tau})=\theta(\tau)$ and
$D^{\theta}=-D$ if and only if the down graph $Y_{q}$ is bipartite, where for simplicity, we write $D=D_{q}^{\down}$.

Therefore, if $Y$ is bipartite, then eigenvalues of $D$ is symmetric about the origin.
\end{thm}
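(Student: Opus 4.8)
The plan is to translate the operator identity $D^{\theta}=-D$, where $D=D_{q}^{\down}$, into an elementary two‑colouring condition on the down graph $Y_q$, and then read off the spectral symmetry from Lemma~\ref{symlem}. So I would first identify exactly which pairs $(\tau,\tau')$ can contribute to $D$, show that the switching $\theta\mapsto D^{\theta}$ acts entrywise by $\pm1$, and match the two.

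First I would make the switched operator explicit. Since $S^{\theta}$ is the diagonal matrix $S^{\theta}(\tau,\tau')=\theta(\tau)\delta_{\tau\tau'}$ and commutes with $A^{\down}$, the definitions in Section~\ref{SSS} give $(D_{q}^{\down})^{\theta}=S^{\theta}D_{q}^{\down}S^{\theta}$, i.e. entrywise $D^{\theta}(\tau,\tau')=\theta(\tau)\,D(\tau,\tau')\,\theta(\tau')$. Next I would record the shape of $D$ itself from $\eqref{disc22}$: $D(\tau,\tau')=\bigl(2\sqrt{\deg_{Y}(\tau)\deg_{Y}(\tau')}\bigr)^{-1}\eta^{\down}(\tau,\tau')$ for $(\tau,\tau')\in E(Y_q)$ and $D(\tau,\tau')=0$ otherwise. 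The key observation is that $\eta^{\down}(\tau,\tau')=\pm1$ on every edge of $Y_q$, since it is a product of two nonzero signatures (the shared $(q-1)$-simplex is a genuine face of both $[\tau]$ and $[\tau']$), and $\deg_{Y}>0$ there; hence $D(\tau,\tau')\neq0$ \emph{exactly} when $(\tau,\tau')\in E(Y_q)$, and $D$ has zero diagonal. Combining the two formulas, and noting that on the diagonal and on non‑edges both $D^{\theta}$ and $-D$ vanish, I obtain: $D^{\theta}=-D$ holds if and only if $\theta(\tau)\theta(\tau')=-1$ for every edge $(\tau,\tau')$ of $Y_q$, i.e. if and only if $\theta$ is a proper $2$-colouring of $Y_q$.

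From here both directions are immediate. If $Y_q$ is bipartite I would fix a bipartition $K_q=P\sqcup Q$, set $\theta\equiv1$ on $P$ and $\theta\equiv-1$ on $Q$, check it can be chosen with $\theta(\ol{\tau})=\theta(\tau)$, and conclude $D^{\theta}=-D$ by the equivalence above. Conversely, if a $\theta$ with $\theta(\ol{\tau})=\theta(\tau)$ satisfies $D^{\theta}=-D$, then $\theta(\tau)\theta(\tau')=-1$ on every edge (using $D(\tau,\tau')\neq0$ there), so $\theta^{-1}(1)$ and $\theta^{-1}(-1)$ form a bipartition of $Y_q$. Finally, the ``therefore'' is one line: with such a $\theta$, Lemma~\ref{symlem} gives $\sigma(D)=\sigma(D^{\theta})=\sigma(-D)=-\sigma(D)$, so the spectrum of $D_{q}^{\down}$ is symmetric about the origin.

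I expect the only point needing a moment's care to be the compatibility of the constraint $\theta(\ol{\tau})=\theta(\tau)$ with a proper $2$-colouring of $Y_q$, whose vertex set $K_q$ is the ``doubled'' object. The quick resolution is that on any connected component of $Y_q$ containing an edge, $\tau$ and $\ol{\tau}$ automatically lie in the same colour class (both are adjacent to any down‑neighbour of $[\tau]$, since $E(Y_q)$ contains $(\tau,\tau')$ iff it contains $(\ol{\tau},\tau')$), so the constraint costs nothing; a component $\{\tau,\ol{\tau}\}$ without edges is coloured by hand. Equivalently, one may pass to the simple graph on $\scal_q$ with down‑neighbour edges, which has the same bipartiteness status as $Y_q$ by a cycle‑parity argument, and pull a $2$-colouring back. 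Apart from this bookkeeping, everything is a direct unwinding of $\eqref{disc22}$ and the definitions in Section~\ref{SSS}.
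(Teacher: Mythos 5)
Your proposal is correct and follows essentially the same route as the paper: both reduce $D^{\theta}=-D$ to the entrywise condition $\theta(\tau)\theta(\tau')=-1$ on every edge of $Y_{q}$ (the paper does this via the basis $\{\delta_{\sigma'}\}$ of $C^{q}(\kcal,\mb{C})$, you do it directly on matrix entries), and then identify such a $\theta$ with a bipartition, concluding spectral symmetry from Lemma~\ref{symlem}. Your extra remark that $\theta(\ol{\tau})=\theta(\tau)$ is automatic for a proper $2$-colouring (since $\tau$ and $\ol{\tau}$ share all their down-neighbours in $Y_{q}$) is a small point the paper leaves implicit, and it is handled correctly.
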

Theorem \ref{symthm} is a special case of Proposition \ref{sym}. We shall give its proof for completeness.
\begin{proof}
For simplicity we write
\[
D(\sigma,\sigma')=\frac{\eta(\sigma,\sigma')}{\sqrt{\deg(\sigma)\deg(\sigma')}},
\]
and if $\sigma$ and $\sigma'$ are not down neighbors, then $D(\sigma,\sigma')=0$.
For any $\sigma' \in K_{q}$, we define a function $\delta_{\sigma'} \in C^{-}$ by
\[
\delta_{\sigma'}(\sigma)=
\begin{cases}
1 & \mbox{($\sigma=\sigma'$)}, \\
-1 & \mbox{($\sigma=\ol{\sigma'}$)}, \\
0 & \mbox{(otherwise)}.
\end{cases}
\]
Then $\{\delta_{\sigma'}\}_{\sigma' \in K_{q}^{o}}$ is a basis of $C^{-}$ for any orientation $K_{q}^{o} \subset K_{q}$
and $\delta_{\ol{\sigma'}}=-\delta_{\sigma'}$.
Take $\theta:K_{q} \to \{\pm 1\}$, $\theta \in C^{+}$. Then,
\[
(D\delta_{\sigma'})(\sigma)=D(\sigma,\sigma'),\quad
(D^{\theta}\delta_{\sigma'})(\sigma)=\theta(\sigma)D(\sigma,\sigma')\theta(\sigma').
\]
Therefore, $D^{\theta}=-D$ if and only if
$\theta(\sigma)\theta(\sigma')=-1$ for any down neighbors $\sigma$, $\sigma'$.

Suppose that there exists a function $\theta \in C^{+}$ with values in $\{\pm 1\}$
such that $D^{\theta}=-D$. Then set $A=\theta^{-1}(1)$, $B=\theta^{-1}(-1)$.
Then $K_{q}$ is written as a disjoint union $K_{q}=A \cup B$.
Take $\sigma, \sigma' \in K_{q}$ such that $\sigma$ and $\sigma'$ are down neighbors.
Then by the above discussion, we have $\theta(\sigma)\theta(\sigma')=-1$. Therefore,
if $\sigma \in A$, then $\sigma' \in B$ and if $\sigma \in B$ then $\sigma' \in A$. Hence
the disjoint decomposition $K_{q}=A \cup B$ is a bipartition.

Next, assume that the graph $Y$ is bipartite. Then, by the definition of the down graph $Y$,
there exists a decomposition $K_{q}=A \cup B$ such that, if $\sigma$, $\sigma'$ are down neighbors,
then $\sigma \in A$ implies $\sigma' \in B$ and vice versa.
So now we define a function $\theta:K_{q} \to \{\pm 1\}$ by
\[
\theta(\sigma)=
\begin{cases}
1 & (\sigma \in A), \\
-1 & (\sigma \in B).
\end{cases}
\]
Take two $\sigma,\sigma' \in K_{q}$ which are down neighbors.
If $\sigma \in A$ then $\sigma' \in B$ and the definition of $\theta$ shows $\theta(\sigma)\theta(\sigma')=-1$.
The same conclusion holds also for the case $\sigma \in B$, $\sigma' \in A$.
Therefore, by the discussion of the above, we see $D^{\theta}=-D$, and hence
in this case $D$ has symmetric eigenvalues.
\end{proof}

\subsection{Examples.}
We give here some examples of eigenvalues of discriminants.
The first two examples are direct application of Theorem \ref{symthm}.
\subsubsection{Cylinder}\label{cylin}
For a cylinder with triangular decomposition drawn in Figure $\ref{f1}$,
\begin{figure}[htb]
     \scalebox{1.00}{\includegraphics{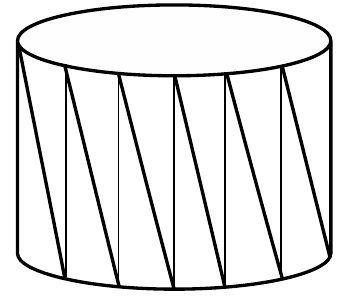}}
     \caption{cylinder}
     \label{f1}
 \end{figure}
we consider two dimensional down-Grover walk on this simplicial complex. If $|S_2|=2m$,
according to the prove of Theorem $4.3$ in \cite {HJa}, the eigenvalue set of $D_2^{\down}$ is equal to
 $$\{-cos(\frac{2j\pi}{2m})|j=0,1,...,2m-1\}\cup \{0\},$$
which is symmetric about the origin.
In fact, the corresponding down graph is bipartite.
Indeed, we can find there are two kinds $2$-simplices, we call them up and down triangles respectively, see Figure $\ref{f2}$.
 \begin{figure}[h]
     \scalebox{1.00}{\includegraphics{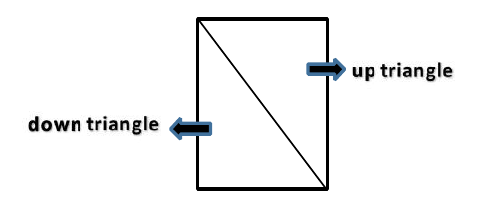}}
     \caption{up and down triangle}
     \label{f2}
 \end{figure}
Then we can divide the simplicial faces into two parts, $V_1$ consists all the up triangles
and $V_2$ consists all the down triangles.

\subsubsection{M\"{o}bius band}\label{MB}

When the Mobius band is subdivided as shown in Figure $\ref{f3}$,
\begin{figure}[h]
     \scalebox{1.00}{\includegraphics{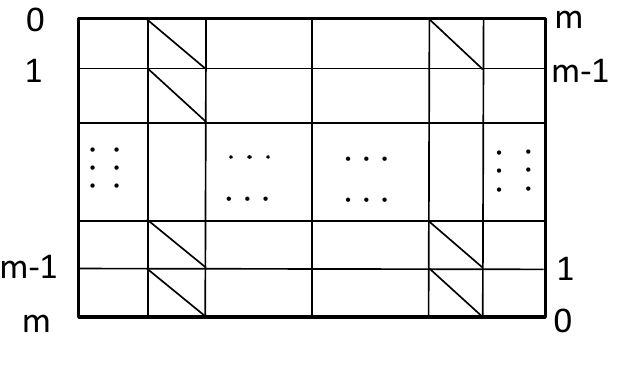}}
     \caption{M\"{o}bius band}
     \label{f3}
 \end{figure}
the corresponding down graph is bipartite, and hence eigenvalues of $D_{2}^{\down}$ are symmetric about the origin.

Here is a natural question: are there any examples that the corresponding graph are not bipartite,
but its eigenvalues of discriminant operator are symmetric. The next two examples will give an answer to this question.
\subsubsection{Sphere}\label{SP}
We consider the sphere with subdivision shown in Figure $\ref{f4}$, namely a boundary of $3$-dimensional
simplex.
\begin{figure}[h]
     \scalebox{1.00}{\includegraphics{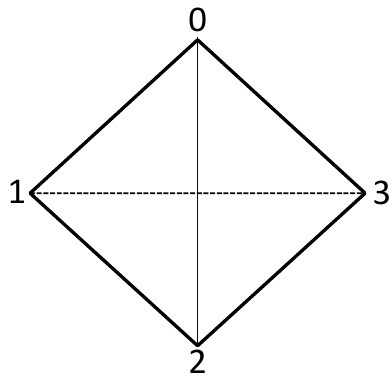}}
     \caption{Non-bipartite example for up-Grover walks}
     \label{f4}
 \end{figure}
There are four $2$-dimensional simplices, that is,
$\sigma_1=(012),\sigma_2=(123),\sigma_3=(013),\sigma_4=(023)$ and six $1$-dimensional simplices,
$\tau_1=(01),\tau_2=(02),\tau_3=(12),\tau_4=(13),\tau_5=(23),\tau_6=(03)$.
We will consider $1$-dimensional up-Grover walk.
By computation, the eigenvalue set of $D_{1}^{\up}$ is $\{\frac{1}{2},-\frac{1}{2},0\}$,
where the eigenvalue $0$ comes from the subspace $C_{+}^{1}(\kcal,\mb{C})$.
Indeed, the matrix of $D_{1}^{\up}$ restricted to these $1$-dimensional simplices with the given orientation is shown below:
\begin{table}[h]
 $D_{1}^{\up}=$
 \begin{tabular}{c|cccccc}
    &$\tau_1$&$\tau_2$&$\tau_3$&$\tau_4$&$\tau_5$&$\tau_6$\\
   \hline
   $\tau_1$&0&-1/4&1/4&1/4&0&-1/4\\
   $\tau_2$&-1/4&0&-1/4&0&1/4&-1/4\\
   $\tau_3$&1/4&-1/4&0&-1/4&1/4&0\\
   $\tau_4$&1/4&0&-1/4&0&-1/4&-1/4\\
   $\tau_5$&0&1/4&1/4&-1/4&0&-1/4\\
   $\tau_6$&-1/4&-1/4&0&-1/4&-1/4&0\\
 \end{tabular}
 \end{table}
If we choose the switching function as $\theta(\tau_1)=-1,\theta(\tau_2)=1,\theta(\tau_3)=-1,
\theta(\tau_4)=1,\theta(\tau_5)=-1,\theta(\tau_6)=-1$,
then $(D_{1}^{\up})^{\theta}$ is shown as follows, from which one can find that $-D_{1}^{\up} \simeq (D_{1}^{\up})^{\theta}$,
$-D_{1}^{\up}$ is obtained by the change replacing the $3$-rd row and the $6$-th row and also $3$-rd column and
$6$-th column in $(D_{1}^{\up})^{\theta}$.
\begin{table}[h]
 $(D_{1}^{\up})^{\theta}=$
 \begin{tabular}{c|cccccc}
    &$\tau_1$&$\tau_2$&$\tau_3$&$\tau_4$&$\tau_5$&$\tau_6$\\
   \hline
   $\tau_1$&0&1/4&1/4&-1/4&0&-1/4\\
   $\tau_2$&1/4&0&1/4&0&-1/4&1/4\\
   $\tau_3$&1/4&1/4&0&1/4&1/4&0\\
   $\tau_4$&-1/4&0&1/4&0&1/4&1/4\\
   $\tau_5$&0&-1/4&1/4&1/4&0&-1/4\\
   $\tau_6$&-1/4&1/4&0&1/4&-1/4&0\\
 \end{tabular}
 \end{table}

\newpage

\subsubsection{An example for down-Grover walks}\label{dGWE}
In this example, we consider $2$-dimensional down-Grover walk for a simplicial complex depicted in Figure $\ref{f5}$.
The $2$-dimensional simplices are $\tau_1=(012),\tau_2=(214),\tau_3=(134),\tau_4=(013),\tau_5=(213)$.
\begin{figure}[h]
     \scalebox{1.00}{\includegraphics{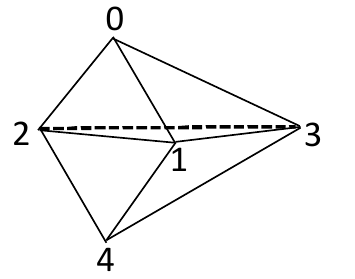}}
     \caption{Example for down Grover walks}
     \label{f5}
 \end{figure}
$D_2^{\down}$ restricted to these $2$-dimensional simplices with the given orientation is shown as follows:
\begin{table}[htb]
 $D_2^{down}=$
 \begin{tabular}{c|ccccc}
    &$\tau_1$&$\tau_2$&$\tau_3$&$\tau_4$&$\tau_5$\\
   \hline
   $\tau_1$&0&$-\frac{1}{3}$&0&$\frac{1}{3}$&$-\frac{1}{2\sqrt{3}}$\\
   $\tau_2$&$-\frac{1}{3}$&0&$-\frac{1}{3}$&0&$\frac{1}{2\sqrt{3}}$\\
   $\tau_3$&0&$-\frac{1}{3}$&0&$\frac{1}{3}$&$\frac{1}{2\sqrt{3}}$\\
   $\tau_4$&$\frac{1}{3}$&0&$\frac{1}{3}$&0&$\frac{1}{2\sqrt{3}}$\\
   $\tau_5$&$-\frac{1}{2\sqrt{3}}$&$\frac{1}{2\sqrt{3}}$&$\frac{1}{2\sqrt{3}}$&$\frac{1}{2\sqrt{3}}$&0\\
 \end{tabular}
 \end{table}
If we choose the switching function $\theta$ as $\theta(\tau_1)=\theta(\tau_2)=\theta(\tau_3)=
 \theta(\tau_4)=-1,\theta(\tau_5)=1$, then $(D_{1}^{\up})^{\theta}$ is given below.
 \begin{table}[h]
 $(D_{2}^{\down})^{\theta}=$
 \begin{tabular}{c|ccccc}
    &$\tau_1$&$\tau_2$&$\tau_3$&$\tau_4$&$\tau_5$\\
   \hline
   $\tau_1$&0&$-\frac{1}{3}$&0&$\frac{1}{3}$&$\frac{1}{2\sqrt{3}}$\\
   $\tau_2$&$-\frac{1}{3}$&0&$-\frac{1}{3}$&0&$-\frac{1}{2\sqrt{3}}$\\
   $\tau_3$&0&$-\frac{1}{3}$&0&$\frac{1}{3}$&$-\frac{1}{2\sqrt{3}}$\\
   $\tau_4$&$\frac{1}{3}$&0&$\frac{1}{3}$&0&$-\frac{1}{2\sqrt{3}}$\\
   $\tau_5$&$\frac{1}{2\sqrt{3}}$&$-\frac{1}{2\sqrt{3}}$&$-\frac{1}{2\sqrt{3}}$&$-\frac{1}{2\sqrt{3}}$&0\\
 \end{tabular}
 \end{table}
We can find that $-D_{2}^{\down}$ is obtained by the change replacing the $2$-nd row and the $4$-th row and
also the $2$-nd column and the $4$-th column from $(D_{2}^{\down})^{\theta}$.
By a direct computation, we can see that
the eigenvalue set of $D_2^{down}$ is $\{\pm\frac{2}{3},0,\pm\frac{\sqrt{3}}{3}\}$.

\subsubsection{Eigenvalues of $(n-1)$-dimensional simplex.}\label{SPE}

Let $\kcal=(V,\scal)$ be the $(n-1)$-dimensional simplex, that is $V=\{1,2,\ldots,n\}$ and $\scal=2^{V}$.
In this case, $\deg_{X_{q}}(\tau)=n-q-1$ and $\deg_{Y_{q}}(\tau)=(q+1)(n-q-1)$.
For any $\sigma \in K_{q+1}$, we define a function $f_{\sigma} \in C^{q}(\kcal,\mb{C})$ by
\[
f_{\sigma}(\tau)=\sgn(\sigma,\tau).
\]
Note that by definition, $f_{\sigma}(\tau)=0$ if $[\tau]$ is not a face of $[\sigma]$.
It is pointed out in \cite{HJa} that there are $\binom{n-1}{q+1}$ linearly independent functions of the form $f_{\sigma}$.
Indeed, we fix a vertex $a \in V$. Then the $(q+1)$-dimensional simplices that contains $a$ as a vertex
cover all of the $q$-dimensional simplices in $\kcal$.
\begin{prop}\label{sx}
Let $\kcal=(V,\scal)$ is the $(n-1)$-dimensional simplex.
Then the eigenvalues of the restriction of
$D_{q}^{\up}$ on $C^{q}(\kcal,\mb{C})$ is $1/(n-q-1)$ and $-1/(q+1)$.
The multiplicity of $1/(n-q-1)$ is $\binom{n-1}{q+1}$ and the multiplicity of $-1/(q+1)$ is $\binom{n-1}{q}$.
\end{prop}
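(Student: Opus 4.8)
The plan is to reduce the computation to the spectrum of the up-Laplacian $\lcal_q^{\up}$ on the full simplex by means of Theorem \ref{Lap2}, and then to use the classical evaluation of the Laplacian of a simplex. First I would note that in the $(n-1)$-simplex $\kcal$ every $q$-simplex is a face of exactly $n-q-1$ simplices of dimension $q+1$, so that $\deg_{X}(\tau)=n-q-1$ is constant and the operator $A_q^{\up}$ of Theorem \ref{Lap2} is the scalar $(n-q-1)^{-1/2}I$. Consequently, on $C^q(\kcal,\mb{C})$,
\[
D_q^{\up}=\frac{1}{q+1}\Bigl(\frac{1}{n-q-1}\,\lcal_q^{\up}-I\Bigr),
\]
so an eigenvalue $\lambda$ of $\lcal_q^{\up}$ gives, with the same multiplicity, the $D_q^{\up}$-eigenvalue $\tfrac{1}{q+1}\bigl(\tfrac{\lambda}{n-q-1}-1\bigr)$; in particular $\lambda=0$ produces $-1/(q+1)$ and $\lambda=n$ produces $1/(n-q-1)$. (For $q=n-1$ one has $\lcal_{n-1}^{\up}=0$ identically, the value $1/(n-q-1)$ is vacuous, and $\binom{n-1}{q+1}=0$, so the statement is consistent there.)

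Next I would determine the spectrum of $\lcal_q^{\up}=\delta_q^*\delta_q$ on $C^q(\kcal,\mb{C})$. Since the reduced cohomology of a simplex vanishes in every degree, the Hodge decomposition reads $C^q(\kcal,\mb{C})=\im\delta_{q-1}\oplus\im\delta_q^*$ with $\ker\delta_q=\im\delta_{q-1}$. On $\im\delta_{q-1}$ one has $\lcal_q^{\up}=0$. On $\im\delta_q^*$, for $\eta=\delta_q^*\zeta$ we get $\delta_{q-1}^*\eta=(\delta_q\delta_{q-1})^*\zeta=0$, hence $\lcal_q^{\down}\eta=\delta_{q-1}\delta_{q-1}^*\eta=0$ and therefore $\lcal_q^{\up}\eta=\lcal_q\eta$. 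Using the well-known fact that the full combinatorial Laplacian of the $(n-1)$-simplex equals $nI$ (see \cite{HJa}; note that the eigenvalues are unaffected by the global rescaling of the inner product on $C^q$ adopted here), we conclude $\lcal_q^{\up}=nI$ on $\im\delta_q^*$. The multiplicities are read off from the ranks of the coboundary maps in the exact augmented cochain complex of $\kcal$: one finds $\operatorname{rank}\delta_q=\binom{n-1}{q+1}$, so that $\dim\im\delta_q^*=\binom{n-1}{q+1}$ and $\dim\ker\delta_q=\binom{n}{q+1}-\binom{n-1}{q+1}=\binom{n-1}{q}$.

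Putting the two steps together, $\lcal_q^{\up}$ has eigenvalue $0$ with multiplicity $\binom{n-1}{q}$ and eigenvalue $n$ with multiplicity $\binom{n-1}{q+1}$, which transports to the eigenvalue $-1/(q+1)$ with multiplicity $\binom{n-1}{q}$ and the eigenvalue $1/(n-q-1)$ with multiplicity $\binom{n-1}{q+1}$ for $D_q^{\up}$ on $C^q(\kcal,\mb{C})$, as claimed; the identity $\binom{n-1}{q}+\binom{n-1}{q+1}=\binom{n}{q+1}$ confirms that these account for all of $C^q(\kcal,\mb{C})$. I expect the only non-routine ingredient to be the identification of the nonzero Laplacian eigenvalue of the simplex as exactly $n$, i.e.\ that $\lcal_q=nI$; should a self-contained argument be preferred, this may be obtained by induction on $q$ via the long exact cohomology sequence, or from the cone structure of the simplex, but invoking \cite{HJa} is the most economical. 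Everything else — constancy of the degrees, the Hodge splitting, and the elementary arithmetic — is immediate.
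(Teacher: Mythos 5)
Your proof is correct, and it reaches the result by a genuinely different route from the paper's: the two arguments differ in where the actual computation happens. The paper works at the level of $D_q^{\up}$ itself: it introduces the explicit functions $f_\sigma(\tau)=\sgn(\sigma,\tau)$ for $\sigma\in K_{q+1}$ and verifies $D_q^{\up}f_\sigma=\frac{1}{n-q-1}f_\sigma$ by a direct case analysis of the sign products in the explicit formula for the discriminant (the case $[\tau]\subset[\sigma]$ producing the eigenvalue, the case $[\tau]\not\subset[\sigma]$ producing cancellation between the two relevant faces $\tau'$); only afterwards does it invoke $D_q^{\up}=\frac{1}{(q+1)(n-q-1)}\lcal_q^{\up}-\frac{1}{q+1}I$ to identify the remaining eigenspace with $\ker\lcal_q^{\up}$, taking its dimension $\binom{n-1}{q}$ from the vanishing of reduced cohomology and the count of $\binom{n-1}{q+1}$ independent $f_\sigma$'s from \cite{HJa}. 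You instead pass to $\lcal_q^{\up}$ at the outset and never compute a single matrix entry: the orthogonal splitting $C^q=\ker\delta_q\oplus\im\,\delta_q^*$, the identity $\lcal_q=nI$ on the reduced cochains of the simplex, and the telescoping rank computation in the exact augmented complex do all the work. Both arguments are sound (your normalization remark is also correct: the factor $\frac12$ in the paper's $\delta_q$ and the doubled inner product cancel, so $\lcal_q^{\up}$ is the usual up-Laplacian). The trade-off is this: the paper's version is constructive — it exhibits the eigenfunctions, which is information worth having — and is self-contained at the level of sign identities, while yours is shorter and makes the multiplicity count cleaner, but it outsources its one nontrivial input, $\lcal_q=nI$, to \cite{HJa}, which is essentially the very computation (Theorem 4.1 there) that the paper's proof says it is re-deriving in the $D_q^{\up}$ language. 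Your closing observation that this identity also follows from the cone structure of the simplex would make the argument fully self-contained if that were desired.
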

\begin{proof}
The proof is almost the same as that of Theorem 4.1 in \cite{HJa}.
But, we give here the details of proof for completeness.
First of all, we shall prove that $D_{q}^{\up}f_{\sigma}=\frac{1}{n-q-1}f_{\sigma}$.
We take $\sigma \in K_{q+1}$ and $\tau \in K_{q}$ and consider the following expression.
\begin{equation}\label{sgnP1}
(D_{q}^{\up}f_{\sigma})(\tau)=
\frac{1}{2(q+1)(n-q-1)} \sum_{\tau' \in K_{q} \, ;\, (\tau,\tau' ) \in E(X_{q})}
\sgn(\sigma_{q+1}(\tau,\tau'),\tau) \sgn(\sigma_{q+1}(\tau,\tau'),\tau') \sgn (\sigma,\tau'),
\end{equation}
where $[\sigma_{q+1}(\tau,\tau')]$ is a $q+1$-simplex that contain $[\tau]$ and $[\tau']$ as faces.
According to the above expression, it is enough to consider the case where
$\sigma$ contains a $q$-face $\tau'$ such that $(\tau,\tau') \in E(X_{q})$, otherwise the summation vanishes.
Suppose first that $[\tau] \subset [\sigma]$. If $\tau' \in K_{q}$, $(\tau,\tau') \in E(X_{q})$ and $[\tau'] \subset [\sigma]$,
then $[\tau]$ and $[\tau']$ is contained in both of $[\sigma]$ and $[\sigma_{q+1}(\tau,\tau')]$.
Hence $[\sigma_{q+1}(\tau,\tau')]=[\sigma]$. Thus, the equation $\eqref{sgnP1}$ becomes
\[
\begin{split}
(D_{q}^{\up}f_{\sigma})(\tau) & =\frac{1}{2(q+1)(n-q-1)} \sum_{\tau'\,;\,[\tau'] \subset [\sigma]}
\sgn(\sigma,\tau)\sgn(\sigma,\tau') \sgn(\sigma,\tau') \\
& =\frac{1}{n-q-1}\sgn(\sigma,\tau)=\frac{1}{n-q-1}f_{\sigma}(\tau).
\end{split}
\]
Suppose next that $[\tau]$ is not a $q$-face of $[\sigma]$.
We take a $q$-face $\tau' \in K_{q}$ of $\sigma$.
Then, it is easy to show that $[\sigma] \cap [\tau]$ is a $(q-1)$-face of each.
To compute the sum in $\eqref{sgnP1}$, we take a numbering of vertices of $\sigma$ and $\tau$ so that
$\sigma=\ispa{a_{0}\cdots a_{q-1}a_{q}a_{q+1}}$ and $\tau=\ispa{a_{0} \cdots a_{q-1}a}$ with $a \not\in [\sigma]$.
Since $[\tau'] \subset [\sigma]$ and $[\tau'] \cap [\tau]$ is a $(q-1)$-face of each,
$\tau'$ has the form $\tau'=\ispa{a_{0} \cdots a_{q-1}a_{q}}$ or $\tau'=\ispa{a_{0} \cdots a_{q-1} a_{q+1}}$.
These two simplices appears in the summation in $\eqref{sgnP1}$.
If $\tau'=\ispa{a_{0}\cdots a_{q-1} a_{q}}$ then $\sigma_{q+1}(\tau,\tau')=\ispa{a_{0}\cdots a_{q-1} a_{q}a}$ and hence
\[
\sgn(\sigma_{q+1}(\tau,\tau'),\tau) \sgn(\sigma_{q+1}(\tau,\tau'),\tau') \sgn (\sigma,\tau')=(-1)^{q} (-1)^{q+1} (-1)^{q+1}=(-1)^{q}.
\]
If $\tau'=\ispa{a_{0} \cdots a_{q-1} a_{q+1}}$ then $\sigma_{q+1}(\tau,\tau')=\ispa{a_{0} \cdots a_{q-1} a_{q+1}a}$ and
\[
\sgn(\sigma_{q+1}(\tau,\tau'),\tau) \sgn(\sigma_{q+1}(\tau,\tau'),\tau') \sgn (\sigma,\tau')=(-1)^{q} (-1)^{q+1} (-1)^{q}=(-1)^{q+1}.
\]
Therefore, the right-hand side of $\eqref{sgnP1}$ vanishes. This shows that $D_{q}^{\up}f_{\sigma}=\frac{1}{n-q-1}f_{\sigma}$.

To finish the proof we note that $D_{q}^{\up}=\frac{1}{(q+1)(n-q-1)}\lcal_{q}^{\up} -\frac{1}{q+1}I$.
Thus we have $\lcal_{q}^{\up}f_{\sigma}=nf_{\sigma}$ and $\ker \lcal_{q}^{\up}$ corresponds to the eigenspace of $D_{q}^{\up}$
with eigenvalue $-1/(q+1)$. By Theorem 3.1 in \cite{}, we see $\dim \ker \lcal_{q}^{\up}=\binom{n-1}{q}$
because all the reduced cohomology vanishes in this case.
This completes the proof.
\end{proof}
\begin{cor}\label{symSX}
For the $(n-1)$-dimensional simplex, the eigenvalues of $D_{q}^{\up}$ are symmetric about the origin
if and only if $n$ is even and $q=(n/2)-1$.
\end{cor}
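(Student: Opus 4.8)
The plan is to read off the full spectrum of $D_{q}^{\up}$ on $\ell^{2}(K_{q})$ from the facts already established and then reduce the symmetry statement to an elementary comparison. Recall that here $\dim\kcal=n-1$, so $U_{q}^{\up}$ (hence $D_{q}^{\up}$) is defined only for $0\le q\le n-2$; in this range $n-q-1\ge 1$ and $\scal_{q}$ is non-empty. By Theorem $\ref{ker1}$, $D_{q}^{\up}$ annihilates $C_{+}^{q}(\kcal,\mb{C})\cong\ell^{2}(\scal_{q})$, which is non-zero, so $0$ is always an eigenvalue; on the complementary invariant subspace $C^{q}(\kcal,\mb{C})$, Proposition $\ref{sx}$ tells us that the only eigenvalues are $1/(n-q-1)$, with multiplicity $\binom{n-1}{q+1}$, and $-1/(q+1)$, with multiplicity $\binom{n-1}{q}$ (these multiplicities add up to $\binom{n}{q+1}=\dim C^{q}(\kcal,\mb{C})$, so nothing is missing). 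Thus the eigenvalue set of $D_{q}^{\up}$ is exactly $\{\,1/(n-q-1),\ 0,\ -1/(q+1)\,\}$, with exactly one strictly positive and exactly one strictly negative element.

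Given this, the forward implication is immediate: if the eigenvalue set is invariant under $\lambda\mapsto-\lambda$, then the negative $-1/(n-q-1)$ of its unique positive element must again be an eigenvalue, and since it is negative it can only equal $-1/(q+1)$; hence $n-q-1=q+1$, i.e.\ $q=(n/2)-1$, which in particular forces $n$ to be even. For the converse, I would take $n$ even and $q=(n/2)-1$, observe that then $n-q-1=q+1=n/2$, so the two non-zero eigenvalues are $2/n$ and $-2/n$, negatives of each other, while $0$ is its own negative; hence the eigenvalue set is symmetric about the origin. If one wants symmetry with multiplicities taken into account, it remains only to note that the multiplicity $\binom{n-1}{q+1}=\binom{n-1}{n/2}$ of $2/n$ equals the multiplicity $\binom{n-1}{q}=\binom{n-1}{n/2-1}$ of $-2/n$, which is just the binomial symmetry $\binom{n-1}{n/2}=\binom{n-1}{(n-1)-n/2}$.

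There is no genuine obstacle here — the corollary is pure bookkeeping on top of Proposition $\ref{sx}$. The only points deserving a line of justification are that $0$ really is an eigenvalue (so that the three listed values are all of the spectrum) and that the two eigenvalues supplied by Proposition $\ref{sx}$ are actually attained, i.e.\ $\binom{n-1}{q+1}$ and $\binom{n-1}{q}$ are both non-zero throughout the admissible range $0\le q\le n-2$; both of these are clear.
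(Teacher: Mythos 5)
Your proposal is correct and follows the same route the paper intends: Corollary \ref{symSX} is stated without a separate proof precisely because it is immediate bookkeeping from Proposition \ref{sx} (plus the zero eigenvalue on $C_{+}^{q}(\kcal,\mb{C})$ from Theorem \ref{ker1}), which is exactly the deduction you carry out. Your extra checks --- that the multiplicities exhaust $\dim C^{q}(\kcal,\mb{C})=\binom{n}{q+1}$ and that $\binom{n-1}{n/2}=\binom{n-1}{n/2-1}$ so symmetry holds with multiplicities --- are correct and only make the argument more complete.
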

We note that Example \ref{SP} could be viewed as a case of Corollary \ref{symSX} because
the simplicial complex in Example $\ref{SP}$ is a $2$-dimensional skeleton of $3$-dimensional simplex.

\section{Finding probabilities and stationary measures}\label{FP}

So far, we investigated rather discriminant operators than the unitary operators themselves.
But, importance of unitary operators is that we can define the finding probabilities.
First of all, let us give the definition of the finding probability for the modified S-quantum walk $G_{q}$
and up and down Grover walks $U_{q}^{\up}$, $U_{q}^{\down}$.
\begin{defin}
\begin{enumerate}
\item For the modified S-quantum walk $G_{q}$, the finding probability $Q_{n}(q,f;F)$ in dimension $q$
at time $n$ and at a simplex $F \in \scal_{q}$ with an initial state $f \in \ell^{2}(\wh{K}_{q})$ $(\|f\|_{\wh{K}_{q}}=1)$
is defined as
\[
Q_{n}(q,f;F)=\sum_{s \in \wh{K}_{q},\,[s]=F} \left|(G_{q}^{n}f)(s)\right|^{2}.
\]
\item For the up-Grover walk $U_{q}^{\up}$, the finding probability $P_{n}^{\up}(q,\phi;F)$ in dimension $q$
at time $n$ and at a simplex $F \in \scal_{q}$ with an initial state $\phi \in \ell^{2}(E(X_{q}))$ $(\|\phi\|_{E(X_{q})}=1)$ is
defined as
\[
P_{n}^{\up}(q,f;F)=\sum_{(\tau,\tau') \in E(X_{q}),\,[\tau]=F}
\left|[(U_{q}^{\up})^{n}\phi](\tau,\tau')
\right|^{2}
\]
\item For the down-Grover walk $U_{q}^{\down}$, the finding probability $P_{n}^{\down}(q,\phi;F)$ in dimension $q$
at time $n$ and at a simplex $F \in \scal_{q}$ with an initial state $\phi \in \ell^{2}(E(Y_{q}))$ $(\|\phi\|_{E(Y_{q})}=1)$ is defined
in the same as in $(2)$ with replacing $X_{q}$ by $Y_{q}$.
\end{enumerate}
\end{defin}

The above definitions are identical to the finding probabilities defined in \cite{HKSS} and \cite{MOS}.
It would be necessary to compare the finding probabilities defined by S-quantum walk and
the up-Grover walk. According to Theorem $\ref{sgroT}$ there are strong relationship between
$G_{q}$ and $U_{q-1}^{\up}$. Therefore, it would be rather natural to compare $Q_{n}(q,f;F)$
and $P_{n}^{\up}(q-1,g;H)$ for some initial states $f$ and $g$.
Since $D(G_{q})$ and $D_{q-1}^{\up}$ acts rather trivially on $C_{+}^{q-1}(\kcal,\mb{C})$,
it would be natural to take initial states in $C_{+}^{q-1}(\kcal,\mb{C})$.
However, it would not so straightforward to find complete relationship
between $Q_{n}(q,f;F)$ and $P_{n}^{\up}(q-1,g;H)$ because the correspondence of
eigenfunctions is not quite simple. Hence we just give one simple situation where
one can find good relationship between them.
\begin{prop}\label{FPP1}
Let $f \in C^{q-1}(\kcal,\mb{C})$ be an eigenfunction of $D_{q-1}^{\up}$ with eigenvalue $1$ satisfying \\
$\|f\|_{\ell^{2}(K_{q-1})}=1$.
We then have
\[
Q_{n}(q, {\wh \alpha_{q-1}r};F)
=\sum_{[\tau]\in \partial F}\frac{1}{\deg_{X}(\tau)}P_{n}^{\up}(q-1,{d^{*}_{X_{q-1}}f};[\tau]),
\]
where $r=\frac{\sqrt{2}}{\sqrt{q!}}f\in C^{q-1}(\kcal,\mb{C}) \subset \ell^{2}(\wh{K}_{q-1})$,
$\|r\|_{\ell^{2}(\wh {K}_{q-1})}=1,$ and $\partial F$ is the boundary of $F$.
\end{prop}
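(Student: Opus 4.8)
The plan is to reduce the asserted formula to the case $n=0$ by showing that each of the two prescribed initial states is fixed by the corresponding unitary operator, and then to verify the resulting time-independent identity by a direct count.

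First I would establish the two fixed-point facts. By Theorem~\ref{ker1} the eigenfunction $f$ of $D_{q-1}^{\up}$ with eigenvalue $1$ automatically lies in $C^{q-1}(\kcal,\mb{C})$, so part~(2) of Theorem~\ref{sgroT} applies and gives $(I-D(G_q))f=\tfrac{2q}{q+1}(I-D_{q-1}^{\up})f=0$, i.e.\ $D(G_q)f=f$. Since $d_{X_{q-1}}d_{X_{q-1}}^{*}=I$, the vector $d_{X_{q-1}}^{*}f$ is fixed by $2d_{X_{q-1}}^{*}d_{X_{q-1}}-I$, so $U_{q-1}^{\up}d_{X_{q-1}}^{*}f=S^{\up}d_{X_{q-1}}^{*}f$; moreover $\|d_{X_{q-1}}^{*}f\|^{2}=\ispa{f,d_{X_{q-1}}d_{X_{q-1}}^{*}f}_{K_{q-1}}=\|f\|_{K_{q-1}}^{2}$ and $\ispa{S^{\up}d_{X_{q-1}}^{*}f,\,d_{X_{q-1}}^{*}f}_{E(X_{q-1})}=\ispa{D_{q-1}^{\up}f,f}_{K_{q-1}}=\|f\|_{K_{q-1}}^{2}$, hence $\|S^{\up}d_{X_{q-1}}^{*}f-d_{X_{q-1}}^{*}f\|^{2}=2\|f\|_{K_{q-1}}^{2}-2\,\re\ispa{D_{q-1}^{\up}f,f}_{K_{q-1}}=0$ and therefore $U_{q-1}^{\up}d_{X_{q-1}}^{*}f=d_{X_{q-1}}^{*}f$; this is the eigenvalue-$1$ case of Lemma~\ref{oriL} transcribed to the up-walk. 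The identical computation with $\wh\alpha_{q-1}$ in place of $d_{X_{q-1}}^{*}$, using $C_q\wh\alpha_{q-1}=\wh\alpha_{q-1}$ and $D(G_q)=\wh\alpha_{q-1}^{*}S_q\wh\alpha_{q-1}$, gives $G_q\wh\alpha_{q-1}f=\wh\alpha_{q-1}f$, hence $G_q^{n}\wh\alpha_{q-1}r=\wh\alpha_{q-1}r$ for every $n$. Thus both finding probabilities are independent of $n$, and it remains to prove the static identity $\sum_{s\in\wh K_q,\,[s]=F}|(\wh\alpha_{q-1}r)(s)|^{2}=\sum_{[\tau]\in\partial F}\deg_X(\tau)^{-1}\sum_{(\tau',\tau'')\in E(X_{q-1}),\,[\tau']=[\tau]}|(d_{X_{q-1}}^{*}f)(\tau',\tau'')|^{2}$.

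Next I would evaluate the left-hand side. From the definition of $\wh\alpha_{q-1}$ one has $(\wh\alpha_{q-1}r)(s)=\deg(\nu_q(s))^{-1/2}r(\nu_q(s))$, where $\deg(\nu_q(s))=\deg_X(\ispa{\nu_q(s)})$ is the number of $q$-simplices containing the $(q-1)$-face $[\nu_q(s)]$. I would group the $(q+1)!$ ordered simplices $s$ with $[s]=F$ according to the vertex removed by $\nu_q$: this vertex singles out a face $[\tau]\in\partial F$, and for each such $[\tau]$ the map $s\mapsto\nu_q(s)$ runs exactly once over all $q!$ orderings of $[\tau]$. Since $r=\tfrac{\sqrt2}{\sqrt{q!}}f$ and $|f(t)|$ depends only on $[t]$, one has $|r(t)|^{2}=\tfrac{2}{q!}|f(\tau)|^{2}$ for every ordering $t$ of $[\tau]$ (and either orientation $\tau$); the factor $q!$ then cancels and the left-hand side collapses to $2\sum_{[\tau]\in\partial F}\deg_X(\tau)^{-1}|f(\tau)|^{2}$.

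Finally I would evaluate the right-hand side in the same manner. By \eqref{dadj}, $(d_{X_{q-1}}^{*}f)(\tau',\tau'')=(2q\,\deg_X(\tau'))^{-1/2}f(\tau')$, which is independent of $\tau''$; the vertex $\tau'$ of the graph $X_{q-1}$ has degree $2q\,\deg_X(\tau')$, and both orientations $\tau$ and $\ol\tau$ occur among the $\tau'$ with $[\tau']=[\tau]$, so the inner sum over the edges $(\tau',\tau'')$ equals $2|f(\tau)|^{2}$. Hence the right-hand side is again $2\sum_{[\tau]\in\partial F}\deg_X(\tau)^{-1}|f(\tau)|^{2}$, which proves the proposition. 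The one genuinely structural input is the pair of fixed-point identities in the first step; after that there is no essential difficulty, and the only point demanding care is the bookkeeping of the normalizing constants — the factors $q!$ and $2q$, the distinction between $\deg$ and $\deg_X$, and the ratio $\tfrac{q!}{2}$ between the squared norms on $\ell^{2}(\wh K_{q-1})$ and $\ell^{2}(K_{q-1})$ that is absorbed into the coefficient $\sqrt2/\sqrt{q!}$ defining $r$ — which is exactly what forces the two sides to coincide.
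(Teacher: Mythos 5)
Your proposal is correct and follows essentially the same route as the paper: establish that $d_{X_{q-1}}^{*}f$ and $\wh\alpha_{q-1}r$ are fixed by $U_{q-1}^{\up}$ and $G_{q}$ respectively (via Theorem \ref{sgroT}), so both finding probabilities are time-independent, and then compute each side directly to get $2\sum_{[\tau]\in\partial F}\deg_{X}(\tau)^{-1}|f(\tau)|^{2}$. The only difference is that you supply the norm argument justifying the fixed-point identities, which the paper labels ``straightforward,'' and your bookkeeping of the constants matches the paper's.
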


\begin{proof}
Let $f \in C^{q-1}(\kcal,\mb{C})$ be such a function as in the statement. Since $D_{q-1}^{\up}f=f$,
by Theorem $\ref{sgroT}$, (2), we have
\[
D(G_{q})r=r,
\]
where $r=\frac{\sqrt{2}}{\sqrt{q!}}f\in C^{q-1}(\kcal,\mb{C}) \subset \ell^{2}(\wh{K}_{q-1})$.
We note that $r$ satisfies $\|r\|_{\ell^{2}(\wh{K}_{q-1})}=1$.
It is straightforward to see that
\[
U_{q-1}^{\up}d^{*}_{X_{q-1}}f=d^{*}_{X_{q-1}}f,\quad
G_{q}\wh \alpha_{q-1}r=\wh \alpha_{q-1}r.
\]
The functions $d^{*}_{X_{q-1}}f$ and $\wh{\alpha}_{q-1}r$ satisfy $\|d^{*}_{X_{q-1}}f\|=\|\wh \alpha_{q-1}r\|=1$.
Then, for any $\tau \in K_{q-1}$, we have
\[
\begin{split}
P_{n}^{\up}(q-1,{d^{*}_{X_{q-1}}f};[\tau]) & =
\sum_{(\tau,\tau') \in E(X_{q})}|(d^{*}_{X_{q-1}}f)(\tau,\tau')|^{2}+\sum_{(\ol{\tau},\tau') \in E(X_{q})}|(d^{*}_{X_{q-1}}f)(\ol{\tau},\tau')|^{2} \\
&= |f(\tau)|^{2}+|f(\ol{\tau})|^{2}\\
&= 2|f(\tau)|^{2}.
\end{split}
\]
The finding probability $Q_{n}(q,\wh{\alpha}_{q-1}r;F)$ at a simplex $F \in \scal_{q}$ with the initial state $\wh{\alpha}_{q-1}r$ is
then computed as follows.
\[
\begin{split}
Q_{n}(q, {\wh \alpha_{q-1}r};F)
&= \sum_{\pi \in \mf{S}_{q+1}}|\wh \alpha_{q-1}r(s_{F}^{\pi})|^{2}\\
&= \sum_{\pi \in \mf{S}_{q+1}}\frac{1}{\deg(\nu_{q}(s_{F}^{\pi}))}|r(\nu_{q}(s_{F}^{\pi}))|^{2}\\
&=\sum_{[\tau] \in \partial F}\frac{q!}{2}\frac{1}{\deg_{X}(\tau)}(|r(\tau)|^{2}+|r(\ol{\tau})|^{2})\\
&=\sum_{[\tau] \in \partial F}\frac{1}{\deg_{X}(\tau)}(|f(\tau)|^{2}+|f(\ol{\tau})|^{2})\\
&=\sum_{[\tau] \in \partial F}\frac{1}{\deg_{X}(\tau)}P_{n}^{\up}(q-1,{d^{*}_{X_{q-1}}f};[\tau]),
\end{split}
\]
where if $F=\{a_{0},\ldots,a_{q}\} \in \scal_{q}$ then we put $s_{F}=(a_{0} \cdots a_{q}) \in \wh{K}_{q}$ in the above computation.
\end{proof}

Next, we compute the finding probability with initial state $f \in C_{+}^{q}(\kcal,\mb{C})$.
\begin{prop}\label{FPP2}
For any $f \in C_{+}^{q}(\kcal,\mb{C})\subset \ell^{2}(K_q)$ with $\|f\|_{\ell^{2}(K_q)}=1$, we have
\begin{equation}\label{FPPQ}
P_{n}^{\up}(q,\psi;[\tau])=\frac{1}{2(q+1)}\sum^{m}_{j=1}Q_{n}(q+1, h; F_{j})+\frac{q}{q+1}|f(\tau)|^{2} \quad (\tau \in K_{q}),
\end{equation}
where $\psi=\frac{1}{\sqrt{2}}(I-iS^{\up})d^{*}_{X_q}f$, $h=\frac{\sqrt{2}}{\sqrt{(q+1)!}}\wh \alpha_q f$ and
$\{F\}^{m}_{j=1}$ is the set of $(q+1)$-dimensional simplices that contain $[\tau]$ as a face.
\end{prop}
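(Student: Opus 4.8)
The plan is to identify $\psi$ and $h$ as eigenvectors of $U_{q}^{\up}$ and $G_{q+1}$, which makes both finding probabilities independent of $n$, and then to evaluate the two $n$-independent sums and check that they agree. For the first point, put $u=d_{X_q}^{*}f$ and $v=S^{\up}u$. Since $f\in C_{+}^{q}(\kcal,\mb{C})$, Proposition $\ref{ker1}$ gives $D_{q}^{\up}f=0$, so $\langle u,v\rangle=\langle f,D_{q}^{\up}f\rangle=0$ and $\|u\|=\|v\|=\|f\|_{\ell^{2}(K_q)}=1$; using $d_{X_q}d_{X_q}^{*}=I$ one checks $(2d_{X_q}^{*}d_{X_q}-I)u=u$ and $(2d_{X_q}^{*}d_{X_q}-I)v=-v$, hence $U_{q}^{\up}u=v$, $U_{q}^{\up}v=-u$, so $U_{q}^{\up}\psi=i\psi$, $\|\psi\|=1$, and $(U_{q}^{\up})^{n}\psi=i^{n}\psi$. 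For $h$: Theorem $\ref{sgroT}$ (1), with $q+1$ in place of $q$, gives $D(G_{q+1})f=-f$; since $C_{q+1}\wh\alpha_{q}f=\wh\alpha_{q}f$, $\wh\alpha_{q}^{*}\wh\alpha_{q}=I$, and $D(G_{q+1})=\wh\alpha_{q}^{*}S_{q+1}\wh\alpha_{q}$, we get $\langle\wh\alpha_{q}f,S_{q+1}\wh\alpha_{q}f\rangle=\langle f,D(G_{q+1})f\rangle=-\|f\|^{2}=-\|\wh\alpha_{q}f\|^{2}$, and the Cauchy--Schwarz equality case forces $S_{q+1}\wh\alpha_{q}f=-\wh\alpha_{q}f$; therefore $G_{q+1}\wh\alpha_{q}f=S_{q+1}C_{q+1}\wh\alpha_{q}f=-\wh\alpha_{q}f$ and $G_{q+1}^{n}h=(-1)^{n}h$. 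The normalization $h=\tfrac{\sqrt2}{\sqrt{(q+1)!}}\wh\alpha_{q}f$ makes $\|h\|_{\ell^{2}(\wh K_{q+1})}=1$, because the pullback of $f$ to $\ell^{2}(\wh K_{q})$ has squared norm $(q+1)!/2$.

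Because $i^{n}$ and $(-1)^{n}$ are unimodular, $P_{n}^{\up}(q,\psi;[\tau])=\sum_{(\sigma,\sigma')\in E(X_q),\,[\sigma]=[\tau]}|\psi(\sigma,\sigma')|^{2}$ and $Q_{n}(q+1,h;F_j)=\sum_{s\in\wh K_{q+1},\,[s]=F_j}|h(s)|^{2}$, both independent of $n$. Using $\eqref{dadj}$ and the definitions of $S^{\up}$ and $\wh\alpha_{q}$, and writing $\alpha_{\sigma}=f(\sigma)/\sqrt{2(q+1)\deg_{X}(\sigma)}$ (this depends only on $[\sigma]$, as $f\in C_{+}^{q}$ and $\deg_{X}(\ol\sigma)=\deg_{X}(\sigma)$), one has $\psi(\sigma,\sigma')=\tfrac1{\sqrt2}\bigl(\alpha_{\sigma}-i\eta^{\up}(\sigma,\sigma')\alpha_{\sigma'}\bigr)$. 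Grouping the edges with $[\sigma]=[\tau]$ by the up-neighbor $[\tau']$ they point to and summing $|\psi(\sigma,\sigma')|^{2}$ over $\sigma\in\{\tau,\ol\tau\}$ and $\sigma'\in\{\tau',\ol{\tau'}\}$, the imaginary cross term $\eta^{\up}(\sigma,\sigma')\im(\alpha_{\sigma}\overline{\alpha_{\sigma'}})$ cancels in pairs since $\eta^{\up}(\sigma,\ol{\tau'})=-\eta^{\up}(\sigma,\tau')$ by $\eqref{sgneta}$ while $\alpha_{\ol{\tau'}}=\alpha_{\tau'}$, leaving $2|\alpha_{\tau}|^{2}+2|\alpha_{\tau'}|^{2}$ for each up-neighbor; as $[\tau]$ has exactly $(q+1)\deg_{X}(\tau)$ up-neighbors this gives $P_{n}^{\up}(q,\psi;[\tau])=|f(\tau)|^{2}+\tfrac1{q+1}\sum_{[\tau']}|f(\tau')|^{2}/\deg_{X}(\tau')$, the sum taken over all up-neighbors $[\tau']$ of $[\tau]$.

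For the right side, $h(s)=\tfrac{\sqrt2}{\sqrt{(q+1)!}}f(\nu_{q+1}(s))/\sqrt{\deg_{X}(\ispa{\nu_{q+1}(s)})}$, and for a fixed coface $F_j$ the sequences $s$ with $[s]=F_j$ whose face $[\nu_{q+1}(s)]$ equals a prescribed codimension-one face $G$ of $F_j$ number $(q+1)!$; summing over the $q+2$ such faces $G$ yields $Q_{n}(q+1,h;F_j)=2\sum_{G}|f(\tau_G)|^{2}/\deg_{X}(G)$, where $\tau_G$ is either orientation of $G$. Summing over the $m=\deg_{X}(\tau)$ cofaces $F_j$ of $[\tau]$ — in all of which $[\tau]$ itself occurs, while each remaining codimension-one face of $F_j$ is an up-neighbor of $[\tau]$ lying in exactly one of the $F_j$ — produces $\tfrac1{2(q+1)}\sum_{j=1}^{m}Q_{n}(q+1,h;F_j)=\tfrac1{q+1}|f(\tau)|^{2}+\tfrac1{q+1}\sum_{[\tau']}|f(\tau')|^{2}/\deg_{X}(\tau')$. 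Adding $\tfrac{q}{q+1}|f(\tau)|^{2}$ recovers exactly the expression for $P_{n}^{\up}(q,\psi;[\tau])$ found above, which completes the proof.

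The genuine content lies in the eigenvector identifications of the first paragraph; everything afterward is bookkeeping. The one step in that bookkeeping that is not purely mechanical — and the reason the final identity comes out clean — is the pairwise cancellation of the imaginary cross terms in $|\psi(\sigma,\sigma')|^{2}$ over the four edges sitting above a single up-neighbor pair; once that is seen, the only care required is in keeping the normalizing constants $\sqrt2/\sqrt{(q+1)!}$ and $1/\sqrt2$ straight and in matching, consistently, the codimension-one faces of each $F_j$ with the up-neighbors of $[\tau]$.
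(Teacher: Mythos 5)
Your proposal is correct and follows essentially the same route as the paper: identify $\psi$ and $h$ as eigenvectors of $U_{q}^{\up}$ and $G_{q+1}$ (with eigenvalues $i$ and $-1$), so both finding probabilities are $n$-independent, then compute each explicitly and match the terms coming from $[\tau]$ itself and from its up-neighbors. The only difference is that you supply details the paper labels as ``straightforward'' (the two-step verification of $U_{q}^{\up}\psi=i\psi$ and the Cauchy--Schwarz argument forcing $S_{q+1}\wh{\alpha}_{q}f=-\wh{\alpha}_{q}f$), which is a welcome but not substantively different elaboration.
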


\begin{cor}\label{FPC2}
Suppose that our simplicial complex $\kcal$ is finite.
For $f \in C_{+}^{q}(\kcal,\mb{C})$ given by $f(t)\equiv f(\ol{t})\equiv\frac{1}{\sqrt{2|\scal_{q}|}}$ for any $t \in K_q$,
where $|\scal_{q}|$ denotes the cardinality of $\scal_{q}$, we have the following.
\[
\begin{gathered}
P_{n}^{\up}(q,\psi;[\tau])
=\frac{1}{2|\scal_{q}|}
+\sum_{(\tau,\tau') \in E(X_{q})}\frac{1}{2(q+1)\deg_{X}(\tau')}\frac{1}{2|\scal_{q}|}, \\
Q_{n}(q+1, h; F)
=\sum_{[t] \in \partial F}\frac{1}{\deg([t])} \frac{1}{|\scal_{q}|},
\end{gathered}
\]
where $\psi \in \ell^{2}(E(X_{q}))$ and $h \in \ell^{2}(\wh{K}_{q+1})$ is as in Proposition $\ref{FPP2}$.
For $q$-regular simplicial complex, i.e., there is some
$k>0$, such that $deg_{X}(\tau)=k$ for all $\tau \in K_q$, we see
$$
P_{n}^{\up}(q,\psi;[\tau])=\frac{1}{|\scal_{q}|}
$$
$$
Q_{n}(q+1, h;F)=\frac{q+2}{|\scal_{q}|k}
$$
\end{cor}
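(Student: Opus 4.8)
The plan is to read off Corollary \ref{FPC2} from Proposition \ref{FPP2} by inserting the constant initial datum, re-using the two scalar-eigenvector facts on which that proposition rests, and then performing the elementary counting that collapses the sums.

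First I would check that the prescribed $f$ is an admissible initial state. Since $f(\ol t)=f(t)$ it lies in $C_{+}^{q}(\kcal,\mb{C})$, and since each $q$-simplex has exactly two orientations, $\#K_{q}=2|\scal_{q}|$, so $\|f\|_{\ell^{2}(K_{q})}^{2}=2|\scal_{q}|\cdot\tfrac{1}{2|\scal_{q}|}=1$ and Proposition \ref{FPP2} applies. Two facts would be carried over from its proof. First, $\psi=\tfrac{1}{\sqrt 2}(I-iS^{\up})d^{*}_{X_{q}}f$ is an eigenvector of $U_{q}^{\up}$ with eigenvalue $i$: this follows from $D_{q}^{\up}f=0$ (Theorem \ref{ker1}, as $f\in C_{+}^{q}$) together with $d_{X_{q}}d_{X_{q}}^{*}=I$ and $(S^{\up})^{2}=I$, which give $U_{q}^{\up}d^{*}_{X_{q}}f=S^{\up}d^{*}_{X_{q}}f$ and $U_{q}^{\up}S^{\up}d^{*}_{X_{q}}f=-d^{*}_{X_{q}}f$. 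Second, $h=\tfrac{\sqrt 2}{\sqrt{(q+1)!}}\wh\alpha_{q}f$ is an eigenvector of $G_{q+1}$ with eigenvalue $-1$: by Theorem \ref{sgroT}(1) with $q$ replaced by $q+1$ one has $D(G_{q+1})f=-f$, hence, arguing as in Lemma \ref{oriL}, $S_{q+1}\wh\alpha_{q}f=-\wh\alpha_{q}f$, and since $C_{q+1}\wh\alpha_{q}f=\wh\alpha_{q}f$ this gives $G_{q+1}h=-h$. Consequently $(U_{q}^{\up})^{n}\psi=i^{n}\psi$ and $G_{q+1}^{n}h=(-1)^{n}h$, so both finding probabilities are independent of $n$ and equal their $n=0$ values
\[
\begin{gathered}
P_{n}^{\up}(q,\psi;[\tau])=\sum_{(\tau_{1},\tau_{2})\in E(X_{q}),\,[\tau_{1}]=[\tau]}|\psi(\tau_{1},\tau_{2})|^{2}, \\
Q_{n}(q+1,h;F)=\sum_{s\in\wh K_{q+1},\,[s]=F}|h(s)|^{2}.
\end{gathered}
\]

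Next I would plug in the explicit expressions. By \eqref{dadj} and the definition of $S^{\up}$, and since the present $f$ is real and constant of square $\tfrac{1}{2|\scal_{q}|}$ (so the cross term is absent; for a general $f$ it is odd under $\tau_{1}\mapsto\ol{\tau_{1}}$ by \eqref{sgneta} and cancels in the sum), one gets $|\psi(\tau_{1},\tau_{2})|^{2}=\tfrac{1}{2}\big(\tfrac{1}{2(q+1)\deg_{X}(\tau_{1})}+\tfrac{1}{2(q+1)\deg_{X}(\tau_{2})}\big)\tfrac{1}{2|\scal_{q}|}$. Summing the first summand over the $2(q+1)\deg_{X}(\tau)$ edges leaving $\tau$ and the equally many leaving $\ol\tau$ yields $\tfrac{1}{2|\scal_{q}|}$; summing the second over the same edges yields $\sum_{(\tau,\tau')\in E(X_{q})}\tfrac{1}{2(q+1)\deg_{X}(\tau')}\tfrac{1}{2|\scal_{q}|}$, which is the first asserted identity. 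For $h$, $|h(s)|^{2}=\tfrac{2}{(q+1)!}\cdot\tfrac{|f(\nu_{q+1}(s))|^{2}}{\deg(\nu_{q+1}(s))}$ with $\deg(\nu_{q+1}(s))=\deg_{X}(\ispa{\nu_{q+1}(s)})$; since exactly $(q+1)!$ of the $(q+2)!$ orderings of $F$ restrict to each of its $q+2$ faces of dimension $q$, the sum over $\{s\,:\,[s]=F\}$ collapses to $2\sum_{[t]\subset F}\tfrac{|f(t)|^{2}}{\deg_{X}(t)}=\sum_{[t]\in\partial F}\tfrac{1}{\deg([t])}\tfrac{1}{|\scal_{q}|}$, the second asserted identity. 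Finally, for a $q$-regular complex ($\deg_{X}\equiv k$) the vertex $\tau$ of $X_{q}$ has exactly $2(q+1)k$ outgoing edges and $F$ has exactly $q+2$ faces of dimension $q$, so the two expressions collapse to $\tfrac{1}{2|\scal_{q}|}+2(q+1)k\cdot\tfrac{1}{2(q+1)k}\cdot\tfrac{1}{2|\scal_{q}|}=\tfrac{1}{|\scal_{q}|}$ and $(q+2)\cdot\tfrac{1}{k}\cdot\tfrac{1}{|\scal_{q}|}=\tfrac{q+2}{|\scal_{q}|k}$.

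The conceptual part --- the two scalar-eigenvector identities --- is immediate from Theorems \ref{ker1} and \ref{sgroT}, so the step that requires care is the counting in the third paragraph: one must keep track of the fact that every simplex occurs in $K_{q}$ in two orientations, that an up-neighbour pair $[\tau],[\tau']$ lies in a \emph{unique} $(q+1)$-simplex, and that a $(q+1)$-simplex has $q+2$ faces of dimension $q$ each admitting $(q+1)!$ orderings, so that the combinatorial factors $q+1$, $(q+1)!$ and $(q+2)!$ balance exactly.
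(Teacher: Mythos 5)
Your proposal is correct and follows exactly the route the paper intends: the corollary is obtained by substituting the constant function into the formulas established in the proof of Proposition \ref{FPP2} (equation \eqref{stat1} and the computation of $Q_{n}(q+1,h;F)$) and then counting the $2(q+1)\deg_{X}(\tau)$ outgoing edges at $\tau$ and the $q+2$ codimension-one faces of $F$. Your verification of the eigenvector identities $U_{q}^{\up}\psi=i\psi$ and $G_{q+1}h=-h$ and of the combinatorial factors matches the paper's computations, so there is nothing to add.
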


\vspace{10pt}

\noindent{\it Proof of Proposition $\ref{FPP2}$.}\hspace{3pt}
By Theorem \ref{ker1}, for any $f \in C_{+}^{q}(\kcal,\mb{C})$ with $\|f\|_{\ell^{2}(K_q)}=1$, we have
$D_{q}^{\up}f=0$.
Then it is straightforward to see that
$$
U_q^{\up}\psi=i\psi,
$$
where $\psi=\frac{1}{\sqrt{2}}(I-i S^{\up})d^{*}_{X_q}f$.
Now the finding probability at time $n$ at $[\tau] \in \scal_q$ with initial state $\psi$ is equal to
$$
P_{n}^{\up}(q,\psi;[\tau])=\sum_{(\tau,\tau') \in E(X_{q})}|\psi(\tau,\tau')|^{2}+\sum_{(\ol{\tau},\tau') \in E(X_{q})}|\psi(\ol{\tau},\tau')|^{2}.
$$
Next, we compute $\psi$ as follows.
\[
\begin{split}
\psi(\tau_1,\tau_2)
&= \frac{1}{\sqrt{2}}(I-i S^{\up})d^{*}_{X_q}f(\tau_1,\tau_2)\\
&= \frac{1}{2}\frac{1}{\sqrt{(q+1) \deg_{X}(\tau_1)}}f(\tau_1)\\
& \hspace{10pt}
-\frac{i}{2} \sgn(\sigma_{q+1}(\tau_1,\tau_2),\tau_1)
\sgn(\sigma_{q+1}(\tau_1,\tau_2),\tau_2)\frac{1}{\sqrt{(q+1) \deg_{X}(\tau_2)}}f(\tau_2).
\end{split}
\]
Hence we have
\[
\begin{split}
|\psi(\tau_1,\tau_2)|^{2}
&= \frac{1}{4(q+1) \deg_{X}(\tau_1)}|f(\tau_1)|^{2}+\frac{1}{4(q+1) \deg_{X}(\tau_2)}|f(\tau_2)|^{2}\\
&\hspace{10pt}
-\frac{1}{2}\times\frac{\sgn(\sigma_{q+1}(\tau_1,\tau_2),\tau_1)}{\sqrt{(q+1) \deg_{X}(\tau_1)}}
\frac{\sgn(\sigma_{q+1}(\tau_1,\tau_2),\tau_2)}{\sqrt{(q+1)\deg_{X}(\tau_2)}} \re(f(\tau_1)\ol{if(\tau_2)}).
\end{split}
\]
Since the last part of the above vanishes when we take the sum, we obtain
\begin{equation}\label{stat1}
\begin{split}
P_{n}^{\up}(q,\psi;[\tau])
&= \sum_{(\tau,\tau') \in E(X_{q})}|\psi(\tau,\tau')|^{2}+\sum_{(\ol{\tau},\tau') \in E(X_{q})}|\psi(\ol{\tau},\tau')|^{2}\\
&= \frac{1}{2}|f(\tau)|^{2}+\frac{1}{2}|f(\ol{\tau})|^{2}\\
&\hspace{10pt}
+\sum_{(\tau,\tau')}\frac{1}{2(q+1)\deg_{X}(\tau')}|f(\tau')|^{2}
\end{split}
\end{equation}
Next we compute the finding probability $Q_{n}(q+1,h;F)$ with initial state $h$.
The function $g$ given by $g=\frac{\sqrt{2}}{\sqrt{(q+1)!}}f \in C_{+}^{q}(\kcal,\mb{C}) \subset \ell^{2}(\wh {K}_{q})$ satisfies
$\|g\|_{ \ell^{2}(\wh {K}_{q})}=1$ and, by Theorem $\ref{sgroT}$, (1),
\[
D(G_q)g=-g.
\]
From this, it is easy to see that we have
\[
G_{q+1}h=-h
\]
where $h=\wh \alpha_q g$.
The finding probability at time $n$ at $F \in \scal_{q+1}$ with initial state $h$ is equal to
\[
\begin{split}
Q_{n}(q+1, h; F)
&= \sum_{\pi \in \mf{S}_{q+2}}|h(s_{F}^{\pi})|^{2}\\
&= \sum_{\pi \in \mf{S}_{q+2}}\frac{1}{\deg(\nu_{q+1}F^{\pi})}|g(\nu_{q+1}s_{F}^{\pi})|^{2}\\
&= \sum_{[t]\in \partial F}\frac{1}{\deg([t])}|g([t])|^{2}(q+1)!\\
&= \sum_{[t]\in \partial F}\frac{1}{\deg([t])}\times2|f([t])|^{2}
\end{split}
\]
Comparing with the computation on $P_{n}^{\up}(q,\psi;[\tau])$, we obtain $\eqref{FPPQ}$.
\hfill$\square$

\vspace{10pt}

In order to describe down adjacency, it is also necessary to compute finding probability for Grover walks on down graphs.
\begin{prop}\label{FPP3}
For any $f\in C_{+}^{q}(\kcal,\mb{C})$ with $\|f\|_{\ell^{2}(\wh{K}_{q})}=1$,
the finding probability for the down-Grover walk $U_{q}^{\down}$ is given by
\[
P_{n}^{\down}(q,\eta;[\tau])=\frac{1}{2}|f(\tau)|^{2}+\frac{1}{2}|f(\ol{\tau})|^{2}
+\sum_{(\tau,\tau') \in E(Y_{q})}\frac{1}{2\deg_{Y}(\tau')}|f(\tau')|^{2}.
\]
Moreover, when $f(t)\equiv f(\ol{t})\equiv\frac{1}{\sqrt{2|\scal_{q}|}}$ for any $t \in K_q$, we have
\begin{equation}\label{FPdown}
P_{n}^{\down}(q,\eta;[\tau])
=\frac{1}{2|\scal_{q}|}+\sum_{(\tau,\tau')\in E(Y_{q})}\frac{1}{2\deg_{Y}(\tau')}\frac{1}{2|\scal_{q}|},
\end{equation}
where $\eta=\frac{1}{\sqrt{2}}(I-iS^{\down})d^{*}_{Y_q}f$.
\end{prop}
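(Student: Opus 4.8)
The plan is to follow the proof of Proposition \ref{FPP2} line by line, replacing the ``up'' data $(X_q,d_{X_q},S^{\up},D_q^{\up})$ by the ``down'' data $(Y_q,d_{Y_q},S^{\down},D_q^{\down})$. As in Proposition \ref{FPP2}, $f$ should be normalized so that the state $\eta=\frac{1}{\sqrt2}(I-iS^{\down})d_{Y_q}^{*}f$ is a unit vector: since $d_{Y_q}d_{Y_q}^{*}=I$ gives $\|d_{Y_q}^{*}f\|=\|f\|_{\ell^{2}(K_q)}$ and $(I+iS^{\down})(I-iS^{\down})=2I$ gives $\|\eta\|=\|d_{Y_q}^{*}f\|$, this amounts to $\|f\|_{\ell^{2}(K_q)}=1$.

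First I would invoke Theorem \ref{ker1}, which gives $D_q^{\down}f=0$ for every $f\in C_+^{q}(\kcal,\mb{C})$. Writing $w=d_{Y_q}^{*}f$, the identities $d_{Y_q}d_{Y_q}^{*}=I$ and $(S^{\down})^{2}=I$ yield $(2d_{Y_q}^{*}d_{Y_q}-I)w=w$ and $(2d_{Y_q}^{*}d_{Y_q}-I)S^{\down}w=2d_{Y_q}^{*}D_q^{\down}f-S^{\down}w=-S^{\down}w$, whence $U_q^{\down}\eta=S^{\down}\frac{1}{\sqrt2}(w+iS^{\down}w)=\frac{1}{\sqrt2}(S^{\down}w+iw)=i\eta$. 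Consequently $(U_q^{\down})^{n}\eta=i^{n}\eta$, so $|[(U_q^{\down})^{n}\eta](\tau_1,\tau_2)|^{2}=|\eta(\tau_1,\tau_2)|^{2}$ is independent of $n$, and therefore
\[
P_n^{\down}(q,\eta;[\tau])=\sum_{(\tau,\tau')\in E(Y_q)}|\eta(\tau,\tau')|^{2}+\sum_{(\ol{\tau},\tau')\in E(Y_q)}|\eta(\ol{\tau},\tau')|^{2}.
\]

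Next I would compute $\eta$ pointwise from $\eqref{dadj}$ and the definition of $S^{\down}$, getting
\[
\eta(\tau_1,\tau_2)=\frac{f(\tau_1)}{2\sqrt{\deg_Y(\tau_1)}}-\frac{i\,\eta^{\down}(\tau_1,\tau_2)\,f(\tau_2)}{2\sqrt{\deg_Y(\tau_2)}},
\]
so that $|\eta(\tau_1,\tau_2)|^{2}$ equals $\frac{|f(\tau_1)|^{2}}{4\deg_Y(\tau_1)}+\frac{|f(\tau_2)|^{2}}{4\deg_Y(\tau_2)}$ plus the cross term $-\frac{\eta^{\down}(\tau_1,\tau_2)}{2\sqrt{\deg_Y(\tau_1)\deg_Y(\tau_2)}}\re\!\bigl(f(\tau_1)\ol{i f(\tau_2)}\bigr)$. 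Exactly as in Proposition \ref{FPP2}, this cross term cancels in the full sum: the maps $\tau'\mapsto(\tau,\tau')$ and $\tau'\mapsto(\ol{\tau},\tau')$ run over the same set of down-neighbours of $\tau$, while $f(\ol{\tau})=f(\tau)$ (since $f\in C_+^{q}$), $\deg_Y(\ol{\tau})=\deg_Y(\tau)$, and $\eta^{\down}(\ol{\tau},\tau')=-\eta^{\down}(\tau,\tau')$ (the analogue of $\eqref{sgneta}$ for $\eta^{\down}$), so the contributions of $(\tau,\tau')$ and $(\ol{\tau},\tau')$ are opposite. The surviving diagonal terms are summed using that the degree of $\tau$ as a vertex of $Y_q$ equals $2\deg_Y(\tau)$: one gets $\sum_{(\tau,\tau')\in E(Y_q)}\frac{|f(\tau)|^{2}}{4\deg_Y(\tau)}=\tfrac12|f(\tau)|^{2}$, likewise $\tfrac12|f(\ol{\tau})|^{2}$ from the $\ol{\tau}$-edges, and the $\tau'$-terms of the two sums combine to $\sum_{(\tau,\tau')\in E(Y_q)}\frac{1}{2\deg_Y(\tau')}|f(\tau')|^{2}$. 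This is the asserted formula for $P_n^{\down}(q,\eta;[\tau])$.

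Finally, for the ``moreover'' part I would substitute $f(t)\equiv f(\ol{t})\equiv\frac{1}{\sqrt{2|\scal_q|}}$, which gives $\tfrac12|f(\tau)|^{2}+\tfrac12|f(\ol{\tau})|^{2}=\frac{1}{2|\scal_q|}$ and reduces the remaining sum to $\sum_{(\tau,\tau')\in E(Y_q)}\frac{1}{2\deg_Y(\tau')}\frac{1}{2|\scal_q|}$, i.e. $\eqref{FPdown}$ (and $\|f\|_{\ell^{2}(K_q)}=1$ here since $|K_q|=2|\scal_q|$). The only mildly delicate points are the eigenvalue relation $U_q^{\down}\eta=i\eta$ --- which, however, is forced by Theorem \ref{ker1} together with the Szegedy-walk identities $d_{Y_q}d_{Y_q}^{*}=I$ and $(S^{\down})^{2}=I$ --- and the bookkeeping of the $\ol{\tau}$-edges in the degree sums; all the rest is a direct computation parallel to Proposition \ref{FPP2}.
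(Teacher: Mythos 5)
Your proposal is correct and follows essentially the same route as the paper's proof: use $D_q^{\down}f=0$ (from Theorem \ref{ker1}) to get $U_q^{\down}\eta=i\eta$, compute $\eta$ pointwise via \eqref{dadj} and the definition of $S^{\down}$, observe that the cross terms cancel by the sign-flip of $\eta^{\down}$ under orientation reversal, and sum the diagonal terms using that the $Y_q$-degree of $\tau$ is $2\deg_Y(\tau)$. You supply more detail than the paper (which defers the bookkeeping to the proof of Proposition \ref{FPP2}), and your observation that the correct normalization is $\|f\|_{\ell^2(K_q)}=1$ is a sensible reading of the statement consistent with the ``moreover'' part.
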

\begin{proof}
For any $f\in C_{+}^{q}(\kcal,\mb{C})$, we have
$D_{q}^{\down}f=0$. Thus, we see
$$
U_q^{\down}\eta=i\eta
$$
where $\eta=\frac{1}{\sqrt{2}}(I-iS^{\down})d^{*}_{Y_q}f$.
A direct computation shows
\[
\begin{split}
\eta(\tau_1,\tau_2)
& =\frac{1}{2}\frac{1}{\sqrt{\deg_{Y}(\tau_1)}}f(\tau_1)\\
&\hspace{10pt}-\frac{i}{2}\sgn(\sigma_{q-1}(\tau_1,\tau_2),\tau_1)
\sgn(\sigma_{q-1}(\tau_1,\tau_2),\tau_2)\frac{1}{\sqrt{\deg_{Y}(\tau_2)}}f(\tau_2).
\end{split}
\]
From this, we see
\[
\begin{split}
|\eta(\tau_1,\tau_2)|^{2}
&= \frac{1}{4\deg_{Y}(\tau_1)}|f(\tau_1)|^{2}+\frac{1}{4\deg_{Y}(\tau_2)}|f(\tau_2)|^{2}\\
&\hspace{10pt} -\frac{1}{2}\times\frac{\sgn(\sigma_{q-1}(\tau_1,\tau_2),\tau_1)}{\sqrt{\deg_{Y}(\tau_1)}}
\frac{\sgn(\sigma_{q-1}(\tau_1,\tau_2),\tau_2)}{\sqrt{\deg_{Y}(\tau_2)}}\re (f(\tau_1)\ol{if(\tau_2)}).
\end{split}
\]
As in the computation in the proof of Proposition $\ref{FPP2}$, we obtain $\eqref{FPdown}$.
\end{proof}
\begin{rem}
The finding probability given in Propositions $\ref{FPP1}$, $\ref{FPP2}$ and $\ref{FPP3}$
gives examples of stationary measures for up and down Grover walks. See \cite{KK, KT} for
discussion about stationary measure.
In order to find non-trivial stationary measure,
one usually tries to find the eigenfunctions of corresponding unitary operator.
In this paper, Theorem \ref{ker1} and \ref{sgroT} give a way to find non-trivial stationary measure for any
simplicial complex for modified S-quantum walks and  up and down Grover walks. For example,
$\eqref{stat1}$ gives an explicit formula for a stationary measure given by eigenfunctions of eigenvalue $i$.
The eigenfunctions we used here are rather trivially obtained one.
Hence it would be interesting to find other eigenfunctions in some examples.
\end{rem}

\section*{Acknowledgement}
The first author was supported by China Scholarship Council to study at Tohoku University for one year.

\vspace{30pt}

\end{document}